%
%
%


\documentclass{amsart}

\usepackage{amsmath}
\usepackage{amssymb}
\usepackage{amsthm}
\usepackage{dsfont}
\usepackage{lineno}
\usepackage{subfigure}
\usepackage{graphicx}
\usepackage{multirow}
\usepackage{color}
\usepackage{xspace}
\usepackage{pdfsync}
\usepackage{hyperref} 
\usepackage{algorithmicx}
\usepackage{algpseudocode}
\usepackage{algorithm}
\usepackage{mathtools}
\usepackage{enumitem}
\graphicspath{{Figures/}}

\DeclarePairedDelimiter\floor{\lfloor}{\rfloor}
\newcommand{\bq}{\begin{equation}}
\newcommand{\eq}{\end{equation}}
\newcommand{\R}{\mathbb{R}}
\newcommand{\abs}[1]{\left\vert#1\right\vert}

\newcommand{\grad}{\nabla}

\newcommand{\bO}{\mathcal{O}}

\newcommand{\one}{\mathds{1}}

\newcommand{\MA}{Monge-Amp\`ere\xspace}
\newcommand{\test}{\mathcal{T}}

\algnewcommand{\LineComment}[1]{\State \(\triangleright\) #1}

\newtheorem{theorem}{Theorem}[section]
\newtheorem{lemma}[theorem]{Lemma}

\theoremstyle{definition}
\newtheorem{definition}[theorem]{Definition}

\theoremstyle{remark}
\newtheorem{remark}[theorem]{Remark}

\numberwithin{equation}{section}

\makeatletter
\newcommand\appendix@section[1]{%
\refstepcounter{section}%
\orig@section*{Appendix \@Alph\c@section: #1}%
}
\let\orig@section\section
\g@addto@macro\appendix{\let\section\appendix@section}
\makeatother

\begin{document}

\title[A viscosity framework for Pogorelov solutions]{A viscosity framework for computing Pogorelov solutions of the Monge-Amp\`ere equation}

\author{Jean-David Benamou}
\address{INRIA, Domaine de Voluceau, BP 153 le Chesnay Cedex, France}
\email{jean-david.benamou@inria.fr}

\author{Brittany D. Froese}
\address{Department of Mathematics and ICES, University of Texas at Austin,  1 University Station C1200, 
Austin, TX 78712}
\email{bfroese@math.utexas.edu}

\thanks{The  authors gratefully acknowledge the support of the ANR through the project ISOTACE (ANR-12-MONU-0013) and INRIA through the ``action exploratoire'' MOKAPLAN
}

\subjclass[2010]{35J96, 65N06}

\keywords{optimal transportation, Monge-Amp\`ere equation, Aleksandrov solutions, viscosity solutions, finite difference methods
}

\date{\today}

\dedicatory{}

\begin{abstract}
We consider the Monge-Kantorovich optimal transportation problem between two measures, one of which is a weighted sum of Diracs. This problem is traditionally  solved 
using expensive geometric methods.
It can also be reformulated as an elliptic partial differential equation known as the Monge-Amp\`ere equation.  
However, existing numerical methods for this non-linear PDE 
require the measures to have finite density. We introduce a new formulation that couples the viscosity and Aleksandrov solution definitions and show that it is equivalent to the original problem.  Moreover, we describe a local reformulation of the subgradient measure at the Diracs, which makes use of one-sided directional derivatives.  This leads to a consistent, monotone discretisation of the equation.  Computational results demonstrate the correctness of this scheme when methods designed for conventional viscosity solutions fail.
\end{abstract}

\maketitle

\section{Introduction}\label{sec:intro}
The Monge-Kantorovich optimal transportation problem between two measures has continued to be an active area of research since it was first formulated in 1781~\cite{Monge,Kantorovich1,Kantorovich2}.  In this article, we focus on the case where one of the measures consists of a weighted sum of Dirac measures.  This is a problem that arises in applications such as astrophysics~\cite{FrischUniv}, meteorology~\cite{Cullen},  and reflector design~\cite{Merigot,Oliker_beam}. More generally, it has been used to study geometric properties of the Monge-Amp\`ere equation ~\cite{Pogorelov}.  

As explained below,  the  numerical solution of this problem remains a  challenge.  While the optimal transportation problem can be reformulated in terms of an elliptic partial differential equation (PDE) known as the \MA equation~\cite{Brenier}, existing numerical methods do not apply to the type of weak solution that is needed when the data involves Dirac measures.  See~\cite{FengReview} for a recent review of this field.

In this article, we describe a new viscosity formulation of the \MA equation that allows for one of the measures to be a weighted sum of Diracs.  We show that solving this equation is equivalent to solving the original optimal transportation problem in the generalised (Pogorelov) sense.  Furthermore, we provide a consistent, monotone discretisation of the equation.  Computational experiments validate the resulting numerical method.

\subsection{Optimal transportation}

Our motivation stems from 
the ``semi-discrete'' Monge-Kantorovich Optimal Transport Mapping (OTM)  between   $\nu $, a compactly supported measure  on $\Omega\subset\R^d$,  and  an atomic measure  composed of $K$ weighted Dirac masses, 
\begin{equation} 
\label{DM}
\mu  = \sum_{k=1}^{K} \alpha_k \, \delta_{d_k} ,
\end{equation}
where $\{d_k\}$ are $K$ given points in $\R^d$, $\sum_k \alpha_k = \int_{supp(\nu)} d\nu$, and $\alpha_k  > 0 $ .

In order to simplify the presentation in this paper,  we will restrict our attention to the Lebesgue measure on the unit ball 
 \begin{equation} 
 \label{s1}
d\nu(y) = \one_{B(0,1)} \, dy
\end{equation}
and dimension $d=2$. 
In this special case, well-posedness requires that the data satisfy the condition
\[ \sum\limits_k\alpha_k = \abs{B(0,1)} = \pi. \]

\begin{remark}
See Appendix~\ref{app:extension} for the extension to a general absolutely continuous measure $d\nu(y) = g(y) \,dy$ with $g$ a positive, Lipschitz continuous density supported on a convex set.  Additionally, the measure $\mu$ can include a non-zero background density $f(x)$.  
\end{remark}
 
The OTM problem  is to   find a map $T: B(0,1)   \to \R^2$ minimising  the transport cost 
 \begin{equation} 
 \label{c1}
 \int_{B(0,1)}  \| y- T(y)\|^2 d\nu(y) 
 \end{equation} 
 under the constraint that  the map is mass-preserving.  We say that $T$ pushes forward $\nu$ to $\mu$, written as $T\#\nu = \mu$, if
 for every measurable sets $E\in\R^d$,  
 \begin{equation} 
\label{MAW}
 \mu(E) = \nu(T^{-1}(E)).
 \end{equation} 

Formally, the optimal map can be characterised as the gradient of a convex  potential $T = \nabla \phi$ (see \cite{Brenier})  and this potential solves 
the second boundary value problem for the Monge-Amp\`ere equation~\cite{Delanoe,Urbas}. 
 \begin{equation} 
\label{MA}
\begin{cases}
\begin{array}{l}
\det(D^2 \phi(x))  \mu(\nabla \phi(x))  = \nu(x) , \,\,  x \in \Omega  \\
  \nabla \phi(\Omega) \subset \text{supp}(\mu). 
 \end{array}
\end{cases}
 \end{equation} 
Here the constraint on the image of the gradient replaces a more traditional boundary condition.
Under mild regularity assumptions, Caffarelli proved that if the measures $\mu,\nu$ have densities that are bounded away from zero with uniformly convex support, one recovers classical solutions of the PDE~\cite{Caf}.

 In the form (\ref{MA}), the equation does not make sense for 
our  particular choice of atomic  $\mu$~\eqref{DM}.  
We instead resort to  a  a weaker notion of Monge-Amp\`ere ,
which goes back to Pogorelov~\cite{Pogorelov} and coincides with Brenier solutions. 
Since the OTM  only transports mass  onto  the Dirac locations $\{d_k\}$, 
the gradient map will almost everywhere exist with  $\nabla u(y) \in \bigcup\limits_k\{d_k\}$. This allows us to characterise the potential
$\phi:B(0,1)\to\R^d$ as the   supremum of affine functions
\begin{equation} 
 \label{p1}
\phi(y) =  \sup\limits_k \{ y\cdot d_k - v_k \}, 
\end{equation} 
which is convex by construction.  Determination of the OTM is reduced to 
the problem  of finding  real numbers $\{v_k\}$, the ``heights'' of the hyperplanes 
\bq\label{eq:phik}\phi_k(y) = \{y\cdot d_k - v_k\},\eq  
such that conservation of mass~\eqref{MAW} holds.
 Under the simplification~\eqref{s1} for $\nu$, this condition is equivalent to 
 \begin{equation} 
 \label{o1}
  | C_k | = \alpha _k ,\,\, \forall k  
 \end{equation} 
 where   
\bq\label{eq:cells}C_k  =\{ y \in B(0,1), \, T(y) =  d_k  \} \eq
is the support of the cell being  mapped to the Dirac at $d_k$.

\begin{definition}[Pogorelov solution]\label{def:pog}
We will refer to a function~(\ref{p1}) such that~(\ref{o1}) holds as a \emph{Pogorelov solution} of the OTM problem.
\end{definition}

Pogorelov provided an algorithm to determine each $v_k$  by remarking that lifting any of the hyperplanes $\phi_j$ (that is, decreasing $v_j$) results in increasing $|C_j|$ and decreasing all 
others values of $|C_k|$. Adjusting these heights sequentially until all volumes of the cells $\{C_k\}$ are correct converges in at most 
$\bO(N^2)$  iterations \cite{dot,Merigot}. 
Pogorelov construction has inspired many works~\cite{Cullen,olikerprussner88,Merigot} and remains the   state of the art  approach for numerically solving this particular 
instance of the semi-discrete Monge-Kantorovitch problem.
However, the method requires the computation of all
cells $C_k$ (also known as power diagrams) at each iteration, which requires $\bO(N \log N)$ operations~\cite{aurenhammer}.  
Thus the computational complexity is at worst $\bO(N^3 \log N)$, though it can be improved using multigrid techniques as proposed in~\cite{Merigot}.  \\

In our study, we explore   a different strategy based on   computing the 
Legendre-Fenchel dual $\phi^*$ of the potential $\phi$.  
While this function has a more complicated structure, we will show that  it is amenable 
to classical PDE approximation techniques on a grid.  This leads to a new method for solving the semi-discrete  OTM problem. 

We recall that the dual is defined as
\[
\phi^*(x) = \sup_{y \in B(0,1)} \{ x \cdot y - \phi(y) \} \]
where the potential 
$\phi$ can be extended to all space by assigning it the value $+\infty$ outside of  $B(0,1)$.

The dual $\phi^*$ can be understood as the potential associated with the inverse OTM.  That is, the subgradient $\partial\phi^*$ provides the optimal rearrangement between the atomic measure $\mu$ and the Lebesgue measure $\nu$ on the unit ball.  Additionally, we will show that $\phi^*$ can be interpreted as an Aleksandrov solution of the \MA equation.

The dual is intimately connected with the original OTM.  In particular, it satisfies the following properties (Theorem~\ref{thm:OTM}):
\[
\begin{cases}
\bar{C_k} = \partial\phi^*(d_k)\\
v_k = \phi^*(d_k).
\end{cases}
\]
Thus if $\phi^*$ is known, the original potential $\phi$ and mapping $\nabla\phi$ are easily constructed.

\subsection{Contents}
In \autoref{sec:dual}, we examine the properties of the Legendre-Fenchel dual $\phi^*$.  In particular, we show that $\phi^*$ solves a \MA equation in the Aleksandrov sense.  We also provide a geometric characterisation of this function.

In \autoref{sec:visc}, we introduce a modified viscosity formulation of the \MA equation, which is valid when the measure $\mu$ is given as a weighted sum of Dirac measures.  Away from the Diracs, this agrees with the usual definition of the viscosity solution.  At the Diracs, we provide a local formulation of the subgradient measure in terms of one-sided directional derivatives.  We show that this formulation is equivalent to the usual Aleksandrov formulation.
 
In \autoref{sec:scheme}, we describe a consistent, monotone approximation scheme for the modified viscosity formulation of the \MA equation.

Finally, in \autoref{sec:numerics}, we provide a numerical study that validates that correctness of our approach.  Comparison with a scheme designed for traditional viscosity solutions~\cite{BFO_OTNum} demonstrates that our modification is necessary.  While the results are low accuracy (approximately $\bO(\sqrt{h})$ in practice), the results could provide an excellent initialisation for more traditional Pogorelov based methods.

\section{The Legendre-Fenchel Dual} \label{sec:dual}
\subsection{Properties of the Legendre-Fenchel dual}

We now turn our attention to the Legendre-Fenchel dual $\phi^*$ of the OTM potential $\phi$.  We recall that  this is defined as
\[ \phi^*(x) = \sup\limits_{y\in B(0,1)}\{y\cdot x- \phi(y)\}. \]
We will also be interested in the subgradient of $\phi^*$, which defines a set-valued map from points in $\R^2$.  The subgradient at a point $p$ is defined as
 \begin{equation} 
 \label{sg2}
 \partial \phi^* (x) =  \{ p \in \R^2 \mid  \phi^*(z) \ge \phi^*(x) + p \cdot (z - x)\,\, \forall z\in \R^2\} 
 \end{equation}  
and the subgradient of a set $E$ is given by
 \begin{equation} 
  \label{sg1}
 \partial \phi^* (E) =  \bigcup\limits_{y \in E}  \partial \phi^*(y) .
 \end{equation} 

We list several basic properties of the Legendre-Fenchel transform~\cite[Section~2.2]{hormander}.

\begin{enumerate}[label={(P\arabic*)}]
\renewcommand{\labelenumi}{{\theenumi}}
\renewcommand{\theenumi}{(P\arabic{enumi})}
\item\label{prop:convex}  $\phi^*$ is  convex and lower semi-continuous.
\item\label{prop:envelope} The double Legendre transform $\phi^{**}$ is equal to the convex envelope of $\phi$.
\item\label{prop:grad} If $\phi$ is convex and lower semi-continuous, then $\phi^{**} = \phi$ and at points $y$ of differentiability~:  $y \in \partial\phi^*(\nabla\phi(y))$.
\item\label{prop:attained} If $\phi$ is convex and lower semi-continuous and $x\in\partial\phi(y)$, then 
$\phi(y)+\phi^*(x) = x\cdot y$.
\item\label{prop:affine} The dual of an affine function $\phi_k(y) = y\cdot d_k - v_k$, extended by $+\infty$ outside of the ball $B(0,1)$, is the cone $\phi_k^* = v_k + \|x-x_k\|$.
\item\label{prop:max} If $\{\phi_i(y)\}$ is a finite set of lower semi-continuous convex functions then the maximum of these functions $f = \max_i \{\phi_i(y) \mid i\in I\}$ is also a lower semi-continuous, convex function and its dual is given by
\[ \phi^*(x) = \inf\left\{\sum\limits_{k} \lambda_k \phi_k^*(x_k) \mid k \in I, \, x_k\in\R^2, \, x = \sum\limits_{k}\lambda_kx_k, \, \lambda_k \geq 0, \, \sum\limits_k \lambda_k = 1\right\}. \]
\end{enumerate}

Using these properties, we can deduce more about the relationship between the original OTM and the dual $\phi^*$.

\begin{theorem}[Relationship to OTM potential]\label{thm:OTM}
For each Dirac at $d_k$, the subgradient of $\phi^*$ gives the cell $C_k$,
\[ \partial\phi^*(d_k) = C_k, \]
and the height $v_k$ of each plane used to define the potential $\phi$ is given by 
\[ v_k = \phi^*(d_k). \]
\end{theorem}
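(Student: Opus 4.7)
The plan is to prove the two identities in sequence: first $v_k = \phi^*(d_k)$, which follows directly from the definition of the Legendre dual, and then $\partial\phi^*(d_k) = C_k$, which uses the standard duality between the subgradients of a convex function and of its dual.

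For $v_k = \phi^*(d_k)$, I would argue both inequalities from the definition $\phi^*(d_k) = \sup_{y \in B(0,1)}\{y\cdot d_k - \phi(y)\}$. The upper bound $\phi^*(d_k) \leq v_k$ is immediate: because $\phi = \sup_j \phi_j \geq \phi_k$, the quantity $y\cdot d_k - \phi(y) \leq v_k$ pointwise. For the reverse inequality, the Pogorelov condition $|C_k| = \alpha_k > 0$ (Definition~\ref{def:pog}) guarantees that the cell $C_k$ is nonempty, and at any $y \in C_k$ the supremum in $\phi(y)$ is attained by $\phi_k$, giving $\phi(y) = y\cdot d_k - v_k$ and hence $y\cdot d_k - \phi(y) = v_k$.

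For the subgradient identity, I would invoke the equivalence $y \in \partial\phi^*(x) \iff x \in \partial\phi(y)$, which follows from (P3) applied to both $\phi$ and $\phi^{**} = \phi$ after extending $\phi$ by $+\infty$ outside $\bar{B}(0,1)$ so that it is lsc. This reduces the claim to showing $d_k \in \partial\phi(y) \iff y \in C_k$. The forward direction is immediate: if $y \in C_k$ then $\phi(y) = y\cdot d_k - v_k$, and since $\phi(z) \geq z\cdot d_k - v_k$ for every $z$, the subgradient inequality $\phi(z) \geq \phi(y) + d_k\cdot(z-y)$ holds trivially. For the converse, given $d_k \in \partial\phi(y)$ I would test the defining inequality against any $z \in C_k$ (nonempty by the first step): this collapses the bounds and forces $\phi(y) = y\cdot d_k - v_k$, placing $y$ in $C_k$.

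The main technical obstacle is the boundary behaviour coming from extending $\phi$ to all of $\R^2$: lsc requires the extension to be $+\infty$ on the complement of $\bar{B}(0,1)$, and subgradients at points of $\partial B(0,1)$ need some care. I would handle this by noting that $\phi^*$ is $1$-Lipschitz on $\R^2$ (the effective domain of $\phi$ sits inside $\bar{B}(0,1)$), so $\partial\phi^*(d_k) \subset \bar{B}(0,1)$ automatically; any residual ambiguity is confined to boundary points and is consistent with the closure version $\overline{C_k} = \partial\phi^*(d_k)$ announced in the introduction.
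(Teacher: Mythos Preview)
Your argument is correct and rests on the same Legendre-duality facts the paper uses, but the execution differs in two respects worth noting. First, you reverse the order: you establish $v_k=\phi^*(d_k)$ directly from the definition of the dual and the nonemptiness of $C_k$, and only then turn to the subgradient identity; the paper does the subgradient claim first via a one-line appeal to~\ref{prop:grad} and then reads off $v_k$ from~\ref{prop:attained}. Second, and more substantively, for the subgradient identity you invoke the full equivalence $y\in\partial\phi^*(x)\iff x\in\partial\phi(y)$ and prove both inclusions explicitly, whereas the paper's application of~\ref{prop:grad} really only yields $C_k\subset\partial\phi^*(d_k)$ (the reverse inclusion is left implicit). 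Your attention to the closure issue---that $\partial\phi^*(d_k)$ is closed while $C_k$ as defined via $\nabla\phi$ need not be, so the precise statement is $\partial\phi^*(d_k)=\overline{C_k}$ as the introduction announces---is a point the paper's proof glosses over. In short: same ingredients, but your version is the more careful of the two.
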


\begin{proof}
To prove the first claim, we recall that by the definition of the cells $C_k$, if $y\in C_k$ then $\nabla \phi(y) = d_k$.  Then~\ref{prop:grad} yields
\[ C_k = \partial\phi^*(\nabla\phi(C_k)) = \partial\phi^*(d_k). \]

For the second part of this claim, we use that fact that by construction, $C_k$ has positive measure.  Thus we can find $y\in C_k$ with $\nabla\phi(y)=d_k$.  We recall that $\phi$ can be characterised as 
\[ \phi(y) = \max\limits_i\{y\cdot d_i - v_i\}, \]
which implies that $\phi(y) = y\cdot d_k - v_k$ and $\phi^*(d_k) = v_k$.
\end{proof}

\subsection{Aleskandrov solutions}

Next we show that the dual $\phi^*$ can be interpreted as a convex Aleksandrov solution of the \MA equation
\bq\label{eq:MA_Aleks}
\begin{cases}
\det(D^2 u) = \mu, & x\in\R^2\\
\partial u(\R^2) \subset B(0,1).
\end{cases}
\eq

\begin{definition}[Aleksandrov solution]\label{def:aleks}
A convex function $u$ is an \emph{Aleksandrov solution} of~\eqref{eq:MA_Aleks} if for every measurable set $E\in \R^2 $,
\[ \abs{\partial u(E)} = \mu(E). \]
\end{definition}

The main result of this section is the equivalence of Aleksandrov and Pogorelov solutions.

\begin{theorem}[Equivalence of Aleksandrov and Pogorelov solutions]\label{thm:AleksPog}
The function $u$ is an Aleksandrov solution of the \MA equation~\eqref{eq:MA_Aleks} if and only if it can be characterised as the Legendre-Fenchel dual $\phi^*$ of a Pogorelov solution $\phi$ of the \MA equation~\eqref{MA}.
\end{theorem}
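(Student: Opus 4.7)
The plan is to prove both implications by exploiting the involution $u^{**}=u$ (property~\ref{prop:grad}) for convex lower semi-continuous functions, allowing free passage between $u$ and $\phi=u^*$.

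For the forward direction, let $\phi$ be a Pogorelov solution and set $u=\phi^*$. Property~\ref{prop:convex} gives convexity and lower semi-continuity, so Definition~\ref{def:aleks} applies. By Theorem~\ref{thm:OTM}, $\partial u(d_k)=C_k$, so $|\partial u(\{d_k\})|=|C_k|=\alpha_k=\mu(\{d_k\})$. The cells $\{C_k\}$ partition $B(0,1)$ up to sets of measure zero and their total Lebesgue measure is $\pi$. Moreover, $\partial u(\R^2)\subset\overline{B(0,1)}$ since any element of $\partial u(x)$ realises the supremum in $\phi^*(x)=\sup_{y\in B(0,1)}\{y\cdot x-\phi(y)\}$. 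Since $E\mapsto|\partial u(E)|$ is a Borel measure, its total mass equals $\pi$, all concentrated at $\{d_k\}$. Consequently $|\partial u(E)|=0$ for any $E$ disjoint from $\{d_k\}$, giving $|\partial u(E)|=\mu(E)$ for every Borel $E$.

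For the converse, let $u$ be an Aleksandrov solution and set $\phi=u^*$. Convexity of $u$ (from Definition~\ref{def:aleks}) and lower semi-continuity together with~\ref{prop:grad} yield $\phi^*=u^{**}=u$, so it suffices to show that $\phi$ has the form~\eqref{p1} with cells satisfying~\eqref{o1}. The boundary condition $\partial u(\R^2)\subset B(0,1)$ implies that $u$ grows at a rate bounded by one in every direction, hence $\phi=u^*\equiv+\infty$ outside $\overline{B(0,1)}$. Inside, the duality $y\in\partial u(x) \Leftrightarrow x\in\partial\phi(y)$ combined with the Aleksandrov condition (which concentrates the Monge-Amp\`ere measure of $u$ on the atoms $\{d_k\}$) shows that $\nabla\phi(y)\in\{d_1,\dots,d_K\}$ for a.e.\ $y\in B(0,1)$. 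A convex function whose gradient takes only finitely many values must be piecewise affine with those slopes, giving $\phi(y)=\max_k\{y\cdot d_k-v_k\}$ for suitable constants $v_k$. The cells $C_k=\{y:\nabla\phi(y)=d_k\}$ then coincide up to measure zero with $\partial u(d_k)$, so $|C_k|=|\partial u(d_k)|=\alpha_k$.

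The main obstacle will be the piecewise-affine reconstruction in the converse: rigorously upgrading the a.e.\ statement $\nabla\phi(y)\in\{d_k\}$ to the global maximum representation~\eqref{p1}. I expect to handle this by observing that on each measurable set where $\nabla\phi=d_k$, the function $\phi(y)-y\cdot d_k$ is a convex function with a.e.\ zero gradient (hence locally constant), and then using convexity of $\phi$ to patch the finitely many affine branches into a single pointwise maximum. A secondary subtlety is justifying $|\partial u(\R^2)|=\pi$ in the forward direction when the sets $\partial u(d_k)$ might overlap on sets of Lebesgue measure zero; this should follow from a countable-additivity argument applied to the Monge-Amp\`ere Borel measure $E\mapsto|\partial u(E)|$ together with the fact that the union of the $C_k$ already exhausts $B(0,1)$ in measure.
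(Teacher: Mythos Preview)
Your proposal is correct and the forward direction is essentially identical to the paper's Lemma~\ref{lem:PogAleks}: both use Theorem~\ref{thm:OTM} to get $|\partial u(d_k)|=\alpha_k$, the containment $\partial u(\R^2)\subset B(0,1)$, and the total-mass count $\pi=\sum_k\alpha_k\le|\partial u(\R^2)|\le\pi$ to conclude the Monge--Amp\`ere measure vanishes off the Diracs.

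For the converse, the paper (Lemma~\ref{lem:AleksPog}) takes a shorter route than you do and completely sidesteps what you flag as the ``main obstacle.'' Rather than first establishing $\nabla\phi(y)\in\{d_k\}$ a.e.\ and then trying to reconstruct the piecewise-affine form from gradient information, the paper argues directly at the level of function values: the trivial bound $u^*(y)\ge\max_k\{d_k\cdot y-u(d_k)\}$ holds everywhere, and for a.e.\ $y\in B(0,1)$ there is some $d_i$ with $y\in\partial u(d_i)$, whence property~\ref{prop:attained} gives the exact equality $u^*(y)=d_i\cdot y-u(d_i)$. This single step delivers $u^*(y)=\max_k\{d_k\cdot y-u(d_k)\}$ a.e., and convexity (hence continuity) of both sides upgrades it to a pointwise identity with $v_k=u(d_k)$. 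Your gradient-based reconstruction would ultimately have to invoke~\ref{prop:attained} anyway to pin down the constants $v_k$, so the paper's argument is simply the streamlined version of yours; what it buys is that the ``patching of affine branches'' never needs to be carried out.
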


\begin{proof}
The result follows immediately from Lemmas~\ref{lem:PogAleks}-\ref{lem:AleksPog}.
\end{proof}

\begin{lemma}[Pogorelov solutions are Aleksandrov]\label{lem:PogAleks}
Let $\phi$ be a Pogorelov solution of the \MA equation~\eqref{MA}.  Then $\phi^*$ is an Aleksandrov solution of the dual \MA equation~\eqref{eq:MA_Aleks}.
\end{lemma}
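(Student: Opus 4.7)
My plan is to verify directly both parts of Definition~\ref{def:aleks}: that the image of the subgradient of $\phi^*$ lies in $B(0,1)$, and that $|\partial\phi^*(E)|=\mu(E)$ for every Borel $E\subseteq\R^2$. The image constraint is immediate from convex duality. Properties~\ref{prop:grad} and~\ref{prop:attained} combine to give the standard equivalence $y\in\partial\phi^*(x)\iff x\in\partial\phi(y)$; since $\phi$ is extended by $+\infty$ outside $B(0,1)$, its subdifferential is empty there, forcing $\partial\phi^*(\R^2)\subseteq\bar B(0,1)$.

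The measure identity is the substantive content. Writing $D=\{d_1,\ldots,d_K\}$, I decompose a generic measurable set as $E=(E\cap D)\cup(E\setminus D)$. For the first piece, Theorem~\ref{thm:OTM} already identifies $\partial\phi^*(d_k)=C_k$ with $|C_k|=\alpha_k$. The cells $C_k$ are pairwise disjoint by construction, with overlaps $C_i\cap C_j$ confined to lower-dimensional interfaces, so the contribution of this piece is exactly $\sum_{d_k\in E}\alpha_k=\mu(E)$.

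The main obstacle is then to show $|\partial\phi^*(E\setminus D)|=0$. Here I would exploit the piecewise-affine structure: $\phi=\sup_k\phi_k$, so wherever $\phi$ is differentiable the supremum is attained uniquely and $\nabla\phi(y)=d_k\in D$. As a convex function on the open set $B(0,1)$, $\phi$ is differentiable a.e., so the range of $\nabla\phi$ lies in $D$ a.e. Consequently, if $y\in\partial\phi^*(x)$ for some $x\notin D$, then $x\in\partial\phi(y)$ with $x\ne\nabla\phi(y)$, which forces $y$ to lie in the non-differentiability set of $\phi$. In dimension two this set is the union of $\partial B(0,1)$ with the one-dimensional edges separating adjacent cells of the Pogorelov tessellation, hence Lebesgue null.

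The subtlety worth flagging is that $\bigcup_{x\in E\setminus D}\partial\phi^*(x)$ is an a~priori uncountable union of null sets, which is not automatically null. What rescues the argument is the piecewise-affine structure of $\phi$, which produces one single explicit null set---the skeleton of the tessellation by the $C_k$---that contains the entire union. Combining the two contributions yields $|\partial\phi^*(E)|=\mu(E)$ for every measurable $E$, completing the verification that $\phi^*$ is Aleksandrov.
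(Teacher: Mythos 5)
Your proposal is correct, and on the decisive step it takes a genuinely different route from the paper. Both arguments get convexity of $\phi^*$ from~\ref{prop:convex} (state this explicitly, since Definition~\ref{def:aleks} requires it), both get the range constraint $\partial\phi^*(\R^2)\subset \overline{B(0,1)}$ by a one-line convexity argument (the paper via a direct inequality with the maximising $y^*$ and the choice $z=x+p$, you via the inverse relation $y\in\partial\phi^*(x)\iff x\in\partial\phi(y)$ and the domain of $\phi$), and both use Theorem~\ref{thm:OTM} to get $\abs{\partial\phi^*(d_k)}=\alpha_k$. The difference is how the subgradient is shown to carry no mass off the Diracs: the paper uses a global pigeonhole, $\pi=\sum_k\alpha_k\le\abs{\partial\phi^*(\R^2)}\le\abs{B(0,1)}=\pi$, which implicitly invokes additivity of the subgradient measure (images of disjoint sets overlap only on a null set) to conclude $\abs{\partial\phi^*(\R^2\setminus\{d_k\})}=0$ and to pass to general measurable $E$; you instead exhibit a single explicit Lebesgue-null set, the non-differentiability skeleton of the piecewise-affine $\phi$ together with $\partial B(0,1)$, containing all of $\partial\phi^*(\R^2\setminus\{d_k\})$, since a.e.\ point of $B(0,1)$ has $\nabla\phi\in\{d_k\}$, and you correctly flag and dispose of the uncountable-union subtlety. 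Your route is more local and self-contained (no pigeonhole, no appeal to the overlap lemma for this step, and an explicit treatment of arbitrary $E$ by the decomposition $E=(E\cap D)\cup(E\setminus D)$, which the paper leaves implicit), at the cost of leaning on the finite-max-of-planes structure of Pogorelov solutions, so it is less portable to the variable-density setting of Appendix~C, where the paper's mass-balance argument still works. One small point of care: the equivalence $y\in\partial\phi^*(x)\iff x\in\partial\phi(y)$ from~\ref{prop:grad}--\ref{prop:attained} should be applied to the lower semi-continuous closure of the extended $\phi$ (finite on the closed ball), which is why the correct conclusion is $\partial\phi^*(\R^2)\subseteq\overline{B(0,1)}$, exactly as you state.
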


\begin{proof}
From~\ref{prop:convex} we deduce that $\phi^*$ is convex.

Next we show that the subgradient of $\phi^*$ is constrained to the closed unit ball.  Let $p\in\partial\phi^*(x)$.  Then for every $z\in\R^2$,
\begin{align*}
p\cdot(z-x) &\leq \phi^*(z)-\phi^*(x)\\
  &= \phi^*(z) - (y^*\cdot x - \phi(y^*)) & \text{for some $y^*\in B(0,1)$}\\
	&\leq (y^*\cdot z - \phi(y^*))-(y^*\cdot x - \phi(y^*)) \\
	&= y^*\cdot(z-x).
\end{align*}
In particular we can choose $z = x + p$, which yields
\[ \|p\|^2 = y^*\cdot p \leq \|p\|. \]
Thus $p\in B(0,1)$.

From Theorem~\ref{thm:OTM} we know that the subgradient measure is correct at each Dirac location $d_k$, $\abs{\partial\phi^*(d_k)} = \abs{C_k} = \alpha_k$.  This allows us to compute the subgradient measure of the whole space.
\[ \pi = \sum\limits_k\alpha_k \leq \abs{\partial\phi^*(\R^2)} \leq \abs{B(0,1)} = \pi. \]

It follows that the subgradient has zero measure away from the Diracs,
\[ \abs{\partial\phi^*\left(\R^2\backslash\bigcup\limits_k\{d_k\}\right)} = 0. \]

We conclude that the subgradient has the correct measure everywhere and satisfies the necessary constraint.  Thus $\phi^*$ is an Aleksandrov solution.
\end{proof}

\begin{lemma}[Aleksandrov solutions are Pogorelov]\label{lem:AleksPog}
Let $u$ be an Aleksandrov solution of the dual \MA equation~\eqref{eq:MA_Aleks}.  Then $u$ can be characterised as the dual $\phi^*$ of a function $\phi$ that satisfies the \MA equation~\eqref{MA} in the Pogorelov sense.
\end{lemma}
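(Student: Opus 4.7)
The plan is to define $\phi = u^*$ and show that $\phi$ has the Pogorelov form~\eqref{p1} with heights $v_k = u(d_k)$ satisfying the mass condition~\eqref{o1}; since $u$ is convex and (being $1$-Lipschitz, as $\partial u\subset B(0,1)$) continuous, property~\ref{prop:envelope} gives $u^{**}=u$, so $u = \phi^*$ automatically. The structural claim to establish is that for a.e.\ $y\in B(0,1)$, the supremum defining $\phi(y)=\sup_x\{x\cdot y - u(x)\}$ is attained at one of the Diracs $d_k$.

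First I would extract the key measure-theoretic consequences of Definition~\ref{def:aleks}. The Aleksandrov condition together with $\sum_k\alpha_k=\pi$ forces $|\partial u(\R^2\setminus\{d_k\})|=0$ and $|\partial u(d_k)|=\alpha_k$. Combined with $\partial u(\R^2)\subset B(0,1)$ and $|B(0,1)|=\pi$, this means the sets $\partial u(d_k)\cap B(0,1)$ cover $B(0,1)$ up to a null set. Next, applying~\ref{prop:grad} to the convex lsc function $\phi=u^*$, at every point $y$ of differentiability of $\phi$ we have $y\in\partial\phi^*(\nabla\phi(y))=\partial u(\nabla\phi(y))$. Since $\phi$ is convex and finite on $B(0,1)$ (as the earlier argument shows the sup is finite there), it is differentiable a.e.\ on $B(0,1)$, and for such $y$ the inclusion $y\in\partial u(\nabla\phi(y))$ forces $\nabla\phi(y)\in\{d_k\}$ (otherwise $y$ would lie in a null set). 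By~\ref{prop:attained}, at any such $y$ with $\nabla\phi(y)=d_k$ we obtain $\phi(y) = y\cdot d_k - u(d_k)$.

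Setting $v_k := u(d_k)$, the definition $\phi(y)=\sup_x\{x\cdot y-u(x)\}$ yields $\phi(y)\ge y\cdot d_k - v_k$ for every $k$, and we have just shown the reverse inequality, with equality, holds a.e.\ on $B(0,1)$. Both $\phi$ and $\max_k\{y\cdot d_k - v_k\}$ are convex and continuous on the interior of $B(0,1)$, so the a.e.\ equality extends to everywhere on $B(0,1)$. Extending $\phi$ by $+\infty$ outside the ball gives precisely the form~\eqref{p1}. Finally, the cells $C_k=\{y\in B(0,1):\nabla\phi(y)=d_k\}$ coincide with $\partial u(d_k)\cap B(0,1)$ up to a null set (by the differentiability argument above, in both directions), so $|C_k|=|\partial u(d_k)|=\alpha_k$, verifying~\eqref{o1}.

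The main obstacle is the soft but crucial step connecting the two dualities: showing that the mass-concentration behavior of $\partial u$ forces the supremum in $u^*(y)$ to be attained at a Dirac for a.e.\ $y$. This rests on combining the Aleksandrov mass identity $|\partial u(\{d_k\})|=\alpha_k$ with the total mass balance $\sum\alpha_k=|B(0,1)|$ and~\ref{prop:grad}; once one sees that $\nabla\phi$ lands in $\{d_k\}$ a.e., the rest is essentially a convexity/continuity argument. A minor care point is that each $d_k$ really appears in the max (i.e.\ the plane $\phi_k$ is not dominated), but this follows from $\alpha_k>0$, which guarantees that $\partial u(d_k)$ has positive measure and hence $\nabla\phi=d_k$ on a set of positive measure.
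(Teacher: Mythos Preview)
Your proof is correct and follows essentially the same route as the paper: set $\phi=u^*$, use the Aleksandrov mass balance to see that the $\partial u(d_k)$ cover $B(0,1)$ up to a null set, apply~\ref{prop:attained} to conclude $\phi(y)=\max_k\{y\cdot d_k - u(d_k)\}$, verify $|C_k|=\alpha_k$, and recover $u=\phi^*$ via $u^{**}=u$. The only cosmetic differences are that you route the identification $\nabla\phi(y)\in\{d_k\}$ through~\ref{prop:grad} rather than invoking~\ref{prop:attained} directly, and you identify $C_k$ with $\partial u(d_k)$ up to a null set instead of the paper's $|C_k|\ge\alpha_k$ followed by a squeeze via $\sum_k|C_k|\le\pi$.
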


\begin{proof}
Let $u$ be a convex Aleksandrov solution and consider its dual
\[ u^*(y) = \sup\limits_{x\in\R^2}\{x\cdot y - u(x)\} \geq \max\limits_{k}\{d_k\cdot y - u(d_k)\}. \]
From~\ref{prop:convex}, $u^*$ is convex on the unit ball.

For almost every $y\in B(0,1)$ there exists some Dirac position $d_i$ such that $y\in\partial u(d_i)$.  As a consequence of~\ref{prop:attained}, we have 
\[ u^*(y) = d_i\cdot y - u(d_i). \]
We conclude that
\[ u^*(y) =  \max\limits_{k}\{d_k\cdot y - u(d_k)\} = \max\limits_{k}\{d_k\cdot y - v_k\}\]
where we have defined $v_k = u(d_k)$.

Next we define the cells $C_k = \{y\in B(0,1) \mid \nabla u^*(y) = d_k\}$.  Choose $y\in\partial u(d_k)$.  For almost every such $y$ we have $\nabla u^*(y) = d_k$; see~\ref{prop:grad}.  Thus
\[ \abs{C_k} \geq \abs{\partial u(d_k)} = \alpha_k. \]
Furthermore, we know that
\[ \sum\limits_k\abs{C_k} \leq \abs{B(0,1)} = \pi = \sum\limits_k\alpha_k. \]
We conclude that $\abs{C_k}= \alpha_k$ and thus $u^*$ is a Pogorelov solution of~\eqref{MA}.

By~\ref{prop:envelope}, $u=u^{**}$ so that $u$ is the dual of a Pogorelov solution.
\end{proof}

Finally, we make the claim that the subgradient map takes values outside the convex hull of the Dirac points $d_k$ into the boundary of the unit ball.

\begin{lemma}[Map onto the boundary]\label{lem:bdyToBdy}
Let $x$ be any point outside the convex hull of the Dirac points $\{d_k\}$.  Then $\partial\phi^*(x) \in \partial B(0,1)$.
\end{lemma}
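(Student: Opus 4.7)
The plan is to use the Fenchel--Young characterisation of the subgradient, which for convex lower semi-continuous functions yields the symmetric relation
\[ y \in \partial\phi^*(x) \iff x \in \partial\phi(y). \]
This follows from combining \ref{prop:grad} and \ref{prop:attained} (applied to both $\phi$ and $\phi^{**}=\phi$). Thus it suffices to show that for every $x$ outside $\operatorname{conv}\{d_1,\dots,d_K\}$ and every $y$ not on $\partial B(0,1)$, we cannot have $x\in\partial\phi(y)$.

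I would split into two cases according to where $y$ sits. \textbf{Case 1:} $y\notin\overline{B(0,1)}$. Since $\phi$ is extended by $+\infty$ outside the unit ball, $\phi(y)=+\infty$ and $\partial\phi(y)=\emptyset$ by definition, so nothing is to prove. \textbf{Case 2:} $y$ is an interior point of $B(0,1)$. Here $\phi$ is given locally by the finite maximum of affine functions $\phi_k(z)=z\cdot d_k - v_k$, each of which is differentiable with $\nabla\phi_k=d_k$. By the standard subdifferential calculus for a finite max of smooth convex functions at an interior point (a special case of \ref{prop:max}),
\[ \partial\phi(y) = \operatorname{conv}\bigl\{d_k : \phi_k(y)=\phi(y)\bigr\} \subset \operatorname{conv}\{d_1,\dots,d_K\}. \]
Since $x$ lies outside this convex hull by hypothesis, $x\notin\partial\phi(y)$.

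Combining the two cases, any $y\in\partial\phi^*(x)$ must lie on $\partial B(0,1)$, which is the claim. A small preliminary point worth noting is that $\partial\phi^*(x)$ is indeed non-empty: $\phi^*$ is convex and finite on all of $\mathbb{R}^2$ (and in fact $1$-Lipschitz, since the sup defining it is taken over $y\in B(0,1)$), so the subgradient is non-empty at every point.

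The only mildly delicate step is justifying the subdifferential identity in Case 2, and in particular ensuring that no ``boundary'' contributions enter when $y$ is strictly interior to $B(0,1)$; this is where the restriction to the open ball matters, because on $\partial B(0,1)$ the extension-by-$+\infty$ attaches an outward normal cone to $\partial\phi(y)$ that can easily reach points outside $\operatorname{conv}\{d_k\}$. Away from $\partial B(0,1)$ this normal cone is absent and the elementary max-rule suffices, so the argument is essentially book-keeping once the Fenchel--Young equivalence is invoked.
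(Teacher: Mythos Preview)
Your argument is correct and takes a genuinely different route from the paper. The paper argues measure-theoretically: it invokes Lemma~\ref{lem:PogAleks} to conclude that $\partial\phi^*(\Omega)$ already fills the closed ball $B(0,1)$ (since the subgradient measures at the Diracs sum to $\pi$), and then appeals to the cyclical monotonicity of $\partial\phi^*$ together with a separating hyperplane between $x$ and $\Omega$ to rule out interior images. Your approach instead passes through Fenchel--Young duality to the primal side and computes $\partial\phi(y)$ directly: for $y$ in the open ball the indicator contributes nothing to the normal cone, so the max-rule gives $\partial\phi(y)\subset\operatorname{conv}\{d_k\}$, which immediately excludes $x$.

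What each buys: the paper's proof is short but leans on Lemma~\ref{lem:PogAleks}, hence on the Pogorelov condition $|C_k|=\alpha_k$; the monotonicity step also needs a little unpacking to be fully rigorous. Your argument is self-contained and in fact proves a slightly stronger statement: it holds for \emph{any} choice of heights $v_k$ in~\eqref{p1}, not just the Pogorelov ones, since you never use the cell-area constraint. One small quibble: the subdifferential formula for a finite maximum of affine functions is not really a special case of~\ref{prop:max} (which concerns the dual, not the subdifferential), but it is an elementary and standard fact, so this does not affect the validity of the proof.
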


\begin{proof}
Let $\Omega$ be the convex hull of the Dirac points.  From Lemma~\ref{lem:PogAleks}, $\partial\phi^*(\Omega)\in B(0,1)$ and $\abs{\partial\phi^*(\Omega)} = \pi$.  Thus the subgradient maps $\Omega$ onto the closed unit ball,
\[ \partial\phi^*(\Omega) = B(0,1). \]
Now choose any $x\notin\Omega$.  From Lemma~\ref{lem:PogAleks}, $\partial\phi^*(x) \in B(0,1)$.  Then the cyclical monotonicity of the map $\partial\phi^*$ reqires that $p\in\partial B(0,1)$.
\end{proof}

\subsection{Geometric characterisation}

We now provide a geometric characterisation of the dual $\phi^*$.  

\begin{lemma}[Characterisation as convex envelope]\label{lem:envelope}
Define the function
\[ \psi(x) = \min\limits_k\{\phi^*(d_k) + \|x-d_k\|\}. \]
Then $\phi^*=\psi^{**}$ is the convex envelope of $\psi$.
\end{lemma}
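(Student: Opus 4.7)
The plan is to invoke properties~\ref{prop:affine} and~\ref{prop:max} to obtain an explicit formula for $\phi^*$, and then recognise that formula as the convex envelope of $\psi$. First I would use Theorem~\ref{thm:OTM} to identify $v_k = \phi^*(d_k)$, so that the Pogorelov potential may be written as $\phi = \max_k \phi_k$ with $\phi_k(y) = y\cdot d_k - v_k$ extended by $+\infty$ outside $B(0,1)$. Property~\ref{prop:affine} then gives $\phi_k^*(x) = \phi^*(d_k) + \|x - d_k\|$, so that $\psi(x) = \min_k \phi_k^*(x)$.

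Next I would apply property~\ref{prop:max} to $\phi = \max_k \phi_k$ to obtain
\[ \phi^*(x) = \inf\left\{\sum_k \lambda_k \phi_k^*(x_k) \,\Big|\, x = \sum_k \lambda_k x_k,\; \lambda_k\geq 0,\; \sum_k \lambda_k = 1\right\}. \]
Meanwhile, $\psi^{**}$ admits the classical convex envelope representation
\[ \psi^{**}(x) = \inf\left\{\sum_i \mu_i \psi(y_i) \,\Big|\, x = \sum_i \mu_i y_i,\; \mu_i \geq 0,\; \sum_i \mu_i = 1\right\}. \]
The task reduces to showing that these two infima coincide.

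The inequality $\phi^*(x) \geq \psi^{**}(x)$ is straightforward: for any admissible $(\lambda_k, x_k)$ in the formula for $\phi^*$, one has $\phi_k^*(x_k) \geq \min_j \phi_j^*(x_k) = \psi(x_k)$, so the representation for $\psi^{**}$ (applied with $y_k := x_k$ and $\mu_k := \lambda_k$) yields $\sum_k \lambda_k \phi_k^*(x_k) \geq \sum_k \lambda_k \psi(x_k) \geq \psi^{**}(x)$. The reverse inequality requires a regrouping argument: given an admissible combination $x = \sum_i \mu_i y_i$ and indices $k_i$ with $\psi(y_i) = \phi_{k_i}^*(y_i)$, I would set $S_k = \{i \mid k_i = k\}$, $\lambda_k = \sum_{i\in S_k}\mu_i$, and (for $\lambda_k > 0$) $\tilde x_k = \lambda_k^{-1}\sum_{i\in S_k} \mu_i y_i$, filling in arbitrary points when $\lambda_k=0$. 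Jensen's inequality applied to each convex function $\phi_k^*$ then gives $\lambda_k \phi_k^*(\tilde x_k) \leq \sum_{i\in S_k}\mu_i\phi_{k_i}^*(y_i)$, and summing over $k$ produces an admissible configuration in the $\phi^*$-formula whose value does not exceed $\sum_i \mu_i \psi(y_i)$.

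The main obstacle is cleanly executing this regrouping step, in particular handling possibly vanishing weights and the (not strictly required but nicer) reduction to convex combinations of at most $K$ points. Once the two inequalities are in hand, $\phi^* = \psi^{**}$, which by~\ref{prop:envelope} is precisely the convex envelope of $\psi$.
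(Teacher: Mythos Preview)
Your proposal is correct and shares the paper's setup: both invoke~\ref{prop:affine} to identify $\psi_k = \phi_k^*$ and~\ref{prop:max} to obtain the infimal-combination formula for $\phi^*$. The difference lies in the final step. You match the formula from~\ref{prop:max} against the Carath\'eodory representation of $\psi^{**}$ and prove the two infima coincide via a regrouping/Jensen argument. The paper instead works with the ``largest convex minorant'' characterisation of the convex envelope: it first shows $\phi^*\leq\psi$ (taking $\lambda_j=1$), and then shows that \emph{any} convex $v\leq\psi$ satisfies, for every admissible $(\lambda_k,x_k)$,
\[
v(x)\;\leq\;\sum_k\lambda_k v(x_k)\;\leq\;\sum_k\lambda_k\psi(x_k)\;\leq\;\sum_k\lambda_k\psi_k(x_k),
\]
whence $v\leq\phi^*$ after taking the infimum. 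This avoids your regrouping step entirely and does not require invoking the infimum representation of $\psi^{**}$; the trade-off is that your argument is more explicit about the structure linking the two infimum formulas, while the paper's is shorter and sidesteps the bookkeeping with vanishing weights that you flagged as an obstacle.
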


\begin{proof}
We consider the cones
\[ \psi_k(x) = \phi^*(d_k) + \|x-d_k\|. \]
From~\ref{prop:affine} this is the dual $\phi_k^*$ of the plane
\[ \phi_k(y) = y\cdot d_k - \phi^*(d_k). \]
That is, $\psi_k = \phi_k^*$.  Applying~\ref{prop:max} we obtain
\bq\label{eq:phidual} \phi^*(x) = \inf\left\{\sum\limits_k\lambda_k\psi_k(x_k) \mid k \in I, \, x_k\in\R^2, \, x = \sum\limits_{k}\lambda_kx_k, \, \lambda_k \geq 0, \, \sum\limits_k \lambda_k = 1\right\} \eq
and $\phi^*$ is convex.

Now we fix any $x\in\R^2$.  For some $j$ we have $\psi(x) = \psi_j(x)$.  Taking $\lambda_j=1$ in the above characterisation we find that $\phi^*(x) \leq \psi(x)$.

Suppose that $\phi^*$ is not the convex envelope of $\psi$.  Then there exists a convex function $v \leq \psi$ such that $v(x) > \phi^*(x)$ for some $x\in\R^2$.  For any $x_k,\lambda_k$ satisfying the constraints
\[ k \in I, \, x_k\in\R^2, \, x = \sum\limits_{k}\lambda_kx_k, \, \lambda_k \geq 0, \, \sum\limits_k \lambda_k = 1 \]
we have
\[ v(x) \leq \sum\limits_k \lambda_kv(x_k) \leq \sum\limits_k \lambda_k\psi(x_k) \leq \sum\limits_k\lambda_k\psi_k(x_k). \]
Computing the infinum over all $x_k,\lambda_k$ in the constraint set, we find that $v(x) \leq \phi^*(x)$, a contradiction.
\end{proof}

This provides a nice geometrical characterisation of the graph of $\phi^*$, which is obtained by computing the infimum of the cones $\phi_k^*$ centred at the Dirac masses, then constructing the convex envelope of the result.  The result agrees with each cone at the corresponding Dirac location $d_k$ (the tip of the cone) and the height coincides with the $v_k$ required for the original OTM.

\section{Mixed Aleksandrov-Viscosity Formulation}\label{sec:visc}
The dual $\phi^*$ can be properly understood as the Aleksandrov solution of a \MA equation.  However, there is no simple framework available for the numerical approximation of these solutions.  

An alternative notion of weak solution is the viscosity solutions.  A rich theory is available for viscosity solutions of degenerate elliptic equations~\cite{CIL}, including the construction and convergence of approximation schemes~\cite{BS_Approx,ObermanDiffSchemes}.  Unfortunately, the usual viscosity theory cannot accommodate the case where the source is a sum of Dirac measures.

In this section, 
we develop a modified notion of viscosity solutions that allows for a source measure of the form
\[ \mu = \sum\limits_k \delta_k. \]

\subsection{Viscosity solutions}\label{sec:viscOrig}

We begin by reviewing the conventional definition of viscosity solutions.

When the given measure $\mu$ can be expressed in terms of a density function $f(x)$, 
\[ \mu(E) = \int\limits_E f(x)\,dx,  \]
we can define viscosity solutions of the \MA equation.  The essence of this type of weak solution is the comparison principle, which allows us to move derivatives off of potentially non-smooth solutions and onto smooth test functions.

Before we give the definition of the viscosity solution, it is important to write the \MA operator in an appropriate form.  Several points need to be addressed:
\begin{enumerate}
\item The state constraint $\partial u(X) \subset B(0,1)$ must be included.  As in~\cite{BFO_OTNum}, we express this condition as a Hamilton-Jacobi equation,
\[ H(\nabla u(x)) = 0, \quad x\notin X, \]
where $H(y)$ is the signed-distance to the boundary of the unit ball $B(0,1)$ and $X$ is any convex set that contains the support of the source density $f$.
Comparison principles for degenerate elliptic equations complemented by these boundary conditions can be found in \cite{barlesbc}.
\item The solution of the \MA equation is only unique up to an additive constant.  To fix a unique solution, we search for the solution that has mean zero.  This condition can be incorporated into the equation by adding $\langle u\rangle$ (the mean of the solution $u$) to the boundary operator.
\item The viscosity solution framework only holds for degenerate elliptic equations. The Monge-Amp\`ere equation is 
degenerate elliptic only on the set of convex function. In Appendix~\ref{app:convexity}, we recall a solution proposed by one of the authors for incorporating the 
convexity constraint into the Monge-Amp\`ere operator.  
\end{enumerate}

This allows us to express the \MA operator as
\bq\label{eq:MA_visc1}
F(x,u(\cdot),\nabla\phi(x),D^2u(x)) \equiv \begin{cases}
-\det(D^2u(x)) + f(x), & x\in X \\
H(\nabla u(x))-\langle u \rangle, & x\in\partial X .
\end{cases}
\eq


The definition of the viscosity solution relies on the value of the operator applied to smooth test functions $\phi$.  In particular, we need to rely on the value of the operator at points where $u-\phi$ has a local extremum.  In order to account for the equation at boundary points, we slightly abuse the usual notion of an extremum as follows.

\begin{definition}[Extremum]
Let $\phi\in C^2$.  We say that $u-\phi$ has a local maximum (minimum) at the point $x_0\in\partial X$ if $u(x_0) - \phi(x_0) \geq (\leq) u(x)-\phi(x)$ in a neighbourhood of $x_0$ and, additionally, there exists a sequence $x_n\notin\partial X$, $\phi_n(x)\in C^2$  with $(x_n,\phi_n(x_n),\nabla\phi_n(x_n),D^2\phi_n(x_n)) \to (x_0,\phi(x_0),\nabla\phi(x_0),D^2\phi(x_0))$ such that  $u-\phi_n$ has a local maximum (minimum) at the point $x_n$.
\end{definition}

\begin{definition}[Viscosity solution]
A convex function $u$ is a \emph{viscosity subsolution (supersolution)} of~\eqref{eq:MA_visc1} if for every convex function $\phi\in C^2$, if $u-\phi$ has a local maximum (minimum) at $x_0$ then 
\[ 
F(x_0,u(\cdot),\nabla\phi(x_0),D^2\phi(x_0)) \leq(\geq)  0 .
\]
A convex function $u$ is a \emph{viscosity solution} if it is both a subsolution and a supersolution.
\end{definition}

\subsection{Mixed Aleksandrov-viscosity solutions}

We now introduce a new formulation of the \MA equation that combines the definitions of viscosity and Aleksandrov solutions.
Note that the notion of viscosity solution (\ref{eq:MA_visc1}) is local and holds for $ f(x) = 0$. 
When the source is given by
\[ \mu = \sum_{k=1}^K \alpha_{k} \delta_{d_k}, \]
we show that is is sufficient to modify the operator at the Dirac points. 


For a convex function $u$, we now define the \MA operator by
\bq\label{eq:MA_visc}
F(x,u(\cdot),\nabla u(x), D^2 u(x)) = 
\begin{cases}
-\det(D^2u(x)) , & x\in X \backslash\bigcup\limits_{k=1}^K\{d_k\}\\
-M[u](d_k) + \alpha_k, & k=1,\ldots,K\\
H(\nabla u(x)) - \langle u \rangle , & x\in\partial X .
\end{cases}
\eq


Then we can define viscosity solutions of~\eqref{eq:MA_visc} as follows.

\begin{definition}[Mixed Aleksandrov-viscosity solution]\label{def:mixed}
A convex function $u$ is a \emph{subsolution (supersolution)} of~\eqref{eq:MA_visc} if 
\begin{enumerate}
\item For every $x_0 \notin \bigcup\limits_k\{d_k\}$ and smooth convex function $\phi$, if $u-\phi$ has a local maximum (minimum) at $x_0$ then 
\[ 
F(x_0,u(\cdot),\nabla\phi(x_0),D^2\phi(x_0)) \leq(\geq)  0.
\]
\item $-M[u](d_k) +\alpha_k\geq (\leq) 0$.
\end{enumerate}
A convex function $u$ is a \emph{mixed Aleksandrov-viscosity solution} if it is both a subsolution and a supersolution.
\end{definition}


We claim that this modified notion of viscosity solutions is consistent with the notion of the Aleksandrov solution.
 
\begin{theorem}[Equivalence of solutions]\label{thm:ViscAleks}
A convex function $u$ with mean zero is a viscosity solution of~\eqref{eq:MA_visc} if and only if it is an Aleksandrov solution of~\eqref{eq:MA_Aleks} in $X$.
\end{theorem}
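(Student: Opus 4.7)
The plan is to decompose the equation into three spatial regimes---the isolated Dirac points, the interior $X \setminus \bigcup_k \{d_k\}$, and the boundary $\partial X$---and verify the equivalence of the two solution concepts in each. At each Dirac $d_k$, the viscosity condition $M[u](d_k) = \alpha_k$ and the Aleksandrov condition $|\partial u(d_k)| = \alpha_k$ coincide by the local identity $M[u](d_k) = |\partial u(d_k)|$ for convex $u$ established earlier in Section~\ref{sec:visc}; this is precisely the motivation for introducing the one-sided directional derivative formulation.

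Away from the Diracs, the viscosity equation reads $\det(D^2 u) = 0$ while the Aleksandrov condition reads $|\partial u(E)| = 0$ for every Borel $E \subset X \setminus \bigcup_k \{d_k\}$. Here I would invoke the classical Aleksandrov--Bakelman equivalence for the homogeneous \MA equation (e.g.~Proposition~1.3.4 in Guti\'errez): for a convex function $u$ on an open set $\Omega$, the Monge--Amp\`ere measure vanishes on $\Omega$ if and only if $u$ is a viscosity solution of $\det(D^2 u)=0$ on $\Omega$. Applying this with $\Omega = X \setminus \bigcup_k \{d_k\}$ handles the interior. For the boundary operator, the mean-zero normalization reduces $H(\nabla u) - \langle u\rangle = 0$ to $H(\nabla u) = 0$, where $H$ is the signed distance to $\partial B(0,1)$. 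The forward direction uses the viscosity subsolution inequality at boundary local maxima, together with convexity, to force $\partial u(X) \subset \overline{B(0,1)}$; the converse uses Lemma~\ref{lem:PogAleks} (adapted to Aleksandrov solutions of~\eqref{eq:MA_Aleks}) together with Lemma~\ref{lem:bdyToBdy}, which shows that subgradients of exterior points land on $\partial B(0,1)$.

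Assembling the three pieces, each direction of the equivalence follows. For viscosity $\Rightarrow$ Aleksandrov, the mass assigned at the Diracs sums to $\sum_k \alpha_k = \pi$, the complement contributes zero subgradient measure, and the state constraint forces $\partial u(X) \subset \overline{B(0,1)}$, so $|\partial u(\cdot)|$ and $\mu$ agree as Borel measures. The converse is obtained by reading the same decomposition backwards: each item in Definition~\ref{def:mixed} follows from the pointwise matching of measures at the Diracs and from the localized Aleksandrov--Bakelman equivalence on open sets avoiding the Diracs.

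The main obstacle I anticipate lies in the boundary condition, namely verifying that the viscosity formulation of $H(\nabla u) = \langle u\rangle$ genuinely captures the global subgradient inclusion $\partial u(X)\subset\overline{B(0,1)}$ rather than merely a pointwise bound on smooth test gradients. Because $u$ need not be $C^1$ at $\partial X$ and because the modified definition of extremum requires lifting boundary extrema via interior test-function sequences, translating inequalities on smooth test functions into a set-valued inclusion requires some care. A secondary delicate point is applying the interior Aleksandrov--Bakelman equivalence on a domain whose closure contains the Diracs, where arbitrarily small neighborhoods carry positive Monge--Amp\`ere mass; I would circumvent this by working on the open complement $X \setminus \bigcup_k\{d_k\}$ and assembling the full measure-theoretic identity by countable additivity.
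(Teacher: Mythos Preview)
Your three-regime decomposition and the tools you invoke at the Diracs and in the interior are exactly what the paper uses (Lemmas~\ref{lem:AleksVisc}--\ref{lem:ViscAleks}): the identity $M[u](d_k)=\abs{\partial u(d_k)}$ from Theorem~\ref{thm:subgradient} at the Diracs, and the Aleksandrov--viscosity equivalence from Guti\'errez (Proposition~1.3.4 in one direction, Theorem~1.7.1 in the other) on $X\setminus\bigcup_k\{d_k\}$. You are also right that Lemma~\ref{lem:bdyToBdy} (via Lemma~\ref{lem:AleksPog}) is what makes the Aleksandrov $\Rightarrow$ viscosity boundary check work: it supplies $\partial u(\partial X)\subset\partial B(0,1)$, which is what the paper uses without explicit citation.

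The one genuine gap is the viscosity $\Rightarrow$ Aleksandrov boundary step. You propose using the \emph{subsolution} inequality at boundary local maxima to force $\partial u(X)\subset\overline{B(0,1)}$. But subgradients of a convex $u$ are slopes of supporting hyperplanes from below, so they are detected by test functions at local \emph{minima}, not maxima; the subsolution inequality alone does not bound $\partial u(x_0)$. The paper instead starts from the \emph{supersolution} inequality: for $x_0\in\partial X$ and $p\in\partial u(x_0)$, the affine $\phi(x)=p\cdot(x-x_0)$ makes $u-\phi$ attain a minimum at $x_0$, giving $H(p)-\langle u\rangle\ge 0$, i.e.\ $p\notin B(0,1+\langle u\rangle)^\circ$. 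Then at the a.e.\ boundary points where $\nabla u$ exists the subsolution inequality gives $H(\nabla u)-\langle u\rangle\le 0$, so boundary subgradients land on $\partial B(0,1+\langle u\rangle)$. Convexity then forces $\partial u(X)=B(0,1+\langle u\rangle)$, and a mass count ($\abs{\partial u(X)}=\sum_k\alpha_k=\pi$) yields $\langle u\rangle=0$ and hence $\partial u(X)\subset B(0,1)$. Even under your standing hypothesis $\langle u\rangle=0$, this supersolution-first argument is what closes the boundary step; your sketch should be rerouted through it.
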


\begin{proof}
The proof follows immediately from Lemma~\ref{lem:AleksVisc} and Lemma~\ref{lem:ViscAleks}.
\end{proof}

\begin{lemma}\label{lem:AleksVisc}
Let $u$ be a convex Aleksandrov solution of~\eqref{eq:MA_Aleks} with mean zero.  Then $u$ is a mixed Aleksandrov-viscosity solution of~\eqref{eq:MA_visc}.
\end{lemma}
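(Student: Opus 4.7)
The plan is to verify Definition~\ref{def:mixed} by a case-by-case check following the piecewise structure of the operator $F$ in~\eqref{eq:MA_visc}: interior non-Dirac points, Dirac points, and boundary points. Throughout, $u$ is the given convex Aleksandrov solution with $\langle u\rangle = 0$, so the additive mean term drops out.

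At an interior non-Dirac point $x_0 \in X \setminus \bigcup_k\{d_k\}$ the relevant operator is $-\det D^2 u$. The subsolution condition is immediate: any convex $C^2$ test function $\phi$ touching $u$ from above satisfies $D^2\phi(x_0)\succeq 0$, so $-\det D^2\phi(x_0)\leq 0$. The supersolution condition is the core of the argument and follows the standard Aleksandrov-to-viscosity argument (cf.\ Guti\'errez, Ch.~1). Arguing by contradiction, if $u - \phi$ has a local minimum at $x_0$ and $\det D^2\phi(x_0) > 0$, I would replace $\phi$ by a downward parabolic perturbation $\tilde\phi(x) = \phi(x) - \varepsilon\|x-x_0\|^2 + c$ and pick a small ball $B\ni x_0$ disjoint from $\{d_k\}$ such that $\tilde\phi \leq u$ on $\bar{B}$ with strict inequality on $\partial B$. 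A contact-set argument then forces $\partial \tilde\phi(B) \subset \partial u(B)$, while a direct computation gives $|\partial \tilde\phi(B)| \geq c\,\det D^2\phi(x_0)\,|B| > 0$. But $\mu(B) = 0$ by choice of $B$, so the Aleksandrov property yields $|\partial u(B)| = 0$, a contradiction.

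At a Dirac point $d_k$, I would apply the Aleksandrov definition to the singleton $E = \{d_k\}$, giving $|\partial u(d_k)| = \mu(\{d_k\}) = \alpha_k$. By the equivalence between the local one-sided-derivative formulation $M[u](d_k)$ and the Aleksandrov subgradient measure $|\partial u(d_k)|$ established earlier in this section, $M[u](d_k) = \alpha_k$, so $-M[u](d_k) + \alpha_k = 0$, satisfying both sub- and supersolution conditions of part~(2) of Definition~\ref{def:mixed}.

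On the boundary $\partial X$ (with $X$ chosen so that $\partial X$ lies outside the interior of the convex hull of the Diracs), the operator reduces to $H(\nabla u(x_0))$ since $\langle u\rangle = 0$. Lemma~\ref{lem:bdyToBdy} gives $\partial u(x_0) \subset \partial B(0,1)$, i.e.\ $H = 0$ on the image. For a supersolution test $\phi \leq u$ with equality at $x_0$, convexity of $\phi$ gives $\phi(z) \geq \phi(x_0) + \nabla\phi(x_0)\cdot(z-x_0)$, which combined with $\phi\leq u$ shows $\nabla\phi(x_0) \in \partial u(x_0)\subset \partial B(0,1)$, hence $H(\nabla\phi(x_0)) = 0 \geq 0$. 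The subsolution case is handled analogously via the modified extremum definition. The main obstacle is Part~1: while the perturbation argument is classical, care is needed to choose $B$ avoiding Diracs, to justify the area lower bound on $|\partial \tilde\phi(B)|$, and to rigorously establish the inclusion $\partial \tilde\phi(B) \subset \partial u(B)$ from the contact set structure.
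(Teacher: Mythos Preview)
Your case-by-case plan matches the paper's proof exactly: the paper too defers the interior non-Dirac case to Guti\'errez (Proposition~1.3.4), reads off the Dirac condition directly from the Aleksandrov definition, and handles $\partial X$ via $\partial u(\partial X)\subset\partial B(0,1)$ (the content of Lemma~\ref{lem:bdyToBdy}). One technical caution on your sketch of the Guti\'errez supersolution argument: with $\tilde\phi\leq u$ on all of $\bar B$ the comparison lemma does not yield $\partial\tilde\phi(B)\subset\partial u(B)$ in the direction you want; the standard route is instead to choose $c>0$ so that the open set $\Omega=\{u<\tilde\phi\}$ is nonempty and compactly contained in $B$, with $u=\tilde\phi$ on $\partial\Omega$, whereupon Guti\'errez's comparison lemma gives $\partial\tilde\phi(\Omega)\subset\partial u(\Omega)$ and the contradiction with $\mu(\Omega)=0$ follows.
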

\begin{proof}
We first demonstrate that $u$ is a subsolution of~\eqref{eq:MA_visc}.  

Let $\phi\in C^2$ be convex and suppose that $u-\phi$ has a maximum at $x_0$, which we can take to be a global maximum without loss of generality.  

If $x_0\in X\backslash\bigcup\limits_{k=1}^K d_k$, then the proof proceeds as in Proposition~1.3.4 of~\cite{Gutierrez}.

Next consider, $x_0 \in \partial X$.  Since $u$ is convex and $\phi$ is smooth, this situation can only arise if $\nabla u(x_0)$ exists.  The fact that $u-\phi$ has an extremum requires that 
\[ \nabla\phi(x_0) = \nabla u(x_0) \subset \partial Y. \]
Then from the definition of the signed distance function $H$ we have
\[ H(\nabla\phi(x_0)) = 0. \]

Finally, we note that since $u$ is an Aleksandrov solution, $-M[u](d_k)+\alpha_k = -\abs{\partial u(d_k)}+\alpha_k = 0 $.  Thus $u$ is a subsolution.

Next we demonstrate that $u$ is a supersolution.  At points $x_0\in X$, the proof proceeds as before.  We now consider $x \in\partial X$, $\phi\in C^2$, and suppose that $u-\phi$ has a minimum at $x_0$.  This requires $\nabla\phi(x_0)\in\partial u(x_0)\subset\partial Y$.  As before we have
\[ H(\nabla\phi(x_0)) = 0. \qedhere\]
\end{proof}
 
\begin{lemma}\label{lem:ViscAleks}
Let  a convex function $u$ be a mixed Aleksandrov-viscosity solution of~\eqref{eq:MA_visc}.  Then $u$ is an Aleksandrov solution of~\eqref{eq:MA_Aleks} in $X$.
\end{lemma}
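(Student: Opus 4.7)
The plan is to verify the Aleksandrov defining identity $\abs{\partial u(E)} = \mu(E)$ for every measurable $E\subset X$, together with the state constraint $\partial u(X) \subset B(0,1)$, by decomposing the argument along the three regions on which the modified operator~\eqref{eq:MA_visc} is defined.

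First I would handle the Dirac locations. For each $d_k$, the two inequalities in Definition~\ref{def:mixed} force $-M[u](d_k)+\alpha_k = 0$. The local formulation of $M[u](d_k)$ introduced earlier in the section was set up precisely to coincide with $\abs{\partial u(\{d_k\})}$, so this immediately yields $\abs{\partial u(\{d_k\})}=\alpha_k$. Next, for any compact $E\subset X\setminus\bigcup_k\{d_k\}$, I would prove $\abs{\partial u(E)}=0$. The supersolution condition applied at every point of this open region says that whenever a smooth convex $\phi$ touches $u$ from below at $x_0\in E$, $-\det(D^2\phi(x_0))\geq 0$; combined with $D^2\phi(x_0)\geq 0$ this forces $\det(D^2\phi(x_0))=0$. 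This is the viscosity statement $\det(D^2 u)\leq 0$ on the punctured open set. Invoking the classical consistency between convex viscosity supersolutions of the \MA equation and an upper bound on the subgradient measure (Proposition~1.7.1 of~\cite{Gutierrez}) then gives $\abs{\partial u(E)}\leq 0$, hence $=0$. Together with the Dirac identity this yields $\abs{\partial u(E)}=\mu(E)$ for every measurable $E\subset X$.

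Finally, I would verify the state constraint. At any $x_0\in\partial X$ where $u$ is differentiable, the boundary subsolution condition gives $H(\nabla u(x_0))\leq \langle u\rangle$, and the matching supersolution inequality (combined with the mean-zero normalization implicit in~\eqref{eq:MA_visc}) pins $\nabla u(x_0)$ inside the unit ball. Convexity of $u$ then propagates $\partial u(X)\subset B(0,1)$ to interior points, and the total mass identity $\abs{\partial u(X)}=\sum_k\alpha_k=\pi=\abs{B(0,1)}$ confirms that no mass is lost through the boundary.

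The main obstacle is the second step. Applying the viscosity-Aleksandrov consistency theorem on the \emph{punctured} open set $X\setminus\bigcup_k\{d_k\}$ requires localizing the tangent-paraboloid construction of~\cite{Gutierrez} to balls that avoid the Dirac points, and checking that the isolated singular behavior of $u$ at those points does not obstruct the approximation. Once that classical machinery is transported into our setting, the remaining pieces are essentially bookkeeping assembled from Definition~\ref{def:mixed} and convexity.
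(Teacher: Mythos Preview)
Your overall strategy matches the paper's: handle the Diracs directly from Definition~\ref{def:mixed}, import Guti\'errez' viscosity--Aleksandrov machinery on the punctured interior, and read off the state constraint from the Hamilton--Jacobi boundary operator. The first two steps are essentially identical to the paper's argument; your remark that the Guti\'errez construction has to be localised to balls avoiding the $d_k$ is exactly the point the paper glosses over with ``easily adapted''.

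The gap is in your boundary step. You write that the supersolution inequality ``combined with the mean-zero normalisation implicit in~\eqref{eq:MA_visc}'' pins $\nabla u(x_0)$ into $B(0,1)$, and only afterwards invoke the mass identity as a confirmation. But the lemma does not assume $\langle u\rangle=0$; that has to be \emph{derived}. The boundary operator is $H(\nabla u)-\langle u\rangle$, so the sub/supersolution inequalities place $\partial u(\partial X)$ on the sphere of radius $1+\langle u\rangle$, not $1$. The paper's proof runs the argument with the auxiliary ball $\tilde B=B(0,1+\langle u\rangle)$: the supersolution condition at $x_0\in\partial X$ forces every $p\in\partial u(x_0)$ to lie outside $\tilde B^\circ$, the subsolution condition at points of differentiability forces it onto $\partial\tilde B$, and convexity then gives $\partial u(X)=\tilde B$. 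Only at this point does the mass balance
\[
|\tilde B|=|\partial u(X)|=\sum_k\alpha_k=|B(0,1)|
\]
enter, yielding $\langle u\rangle=0$ and hence $\partial u(X)\subset B(0,1)$. Your ingredients are all there, but the logical order is inverted; as written, the step ``pins $\nabla u(x_0)$ inside the unit ball'' is unjustified.
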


\begin{proof}
From the definition of the mixed Aleksandrov-viscosity solution, the measure is correct at the Dirac points,
\[ \abs{\partial u(d_k)} = \alpha_k. \]

At points $x\in X\backslash\bigcup\limits_k\{d_k\}$, the proof of~\cite[Theorem~1.7.1]{Gutierrez} is easily adapted to the case $f=0$ to show that 
\[ \abs{\partial u\left(X\backslash\bigcup\limits_k\{d_k\}\right)} = 0. \]

Finally, we need to show that $\partial u(X) \subset B(0,1)$.  We begin by defining a ball of radius $1+\langle u \rangle$: $\tilde{B} = B(0,1+\langle u \rangle)$.  Note that the boundary of this ball corresponds to the set $\{p \mid H(p)-\langle u \rangle = 0\}$.

Choose any $x_0\in\partial X$ and $p\in\partial u(x_0)$.  Then the plane $\phi(x) = p\cdot(x-x_0)$ is such that $u-\phi$ has a minimum at $x_0$.  Thus $H(p)-\langle u \rangle \geq 0$ so that $p \notin \tilde{B}^\circ$, the interior of the ball.

Now by convexity we know that $\nabla u(x_0)$ exists for almost every $x_0\in\partial X$ so that $H(p)-\langle u \rangle \leq 0$ at these points.  Combined with the previous line, we can conclude that for almost every $x_0\in\partial X$, $\partial u(x_0) \in \partial\tilde{B}$.  By convexity of $u$ and the fact that no element of $\partial u(\partial X)$ lies in the interior of $\tilde{B}$, we can conclude that $\partial u(\partial X) = \partial\tilde{B}$ and $\partial u(X) = \tilde{B}$.

We can now compute the subgradient measure over the domain as $\abs{\partial u(X)} = \abs{\tilde{B}}$.  However, we also know that
\[ \abs{\partial u(X)} = \sum\limits_k\alpha_k = \abs{B(0,1)}. \]
The two balls $B(0,1)$ and $B(0,1+\langle u \rangle)$ must then have the same area so that $\langle u \rangle = 0$ and $\partial u(X) \subset B(0,1)$.
\end{proof}

\subsection{Characterisation of subgradient measure}\label{sec:subgradient}

We introduce an alternative characterisation of the subgradient measure of a convex function.  
This approach involves rewriting the subgradient at a point on a convex function  in angular terms using  the one-sided directional derivative $\partial_\theta$ defined as
\bq\label{eq:onesided}
\partial_\theta  u (x) = \lim_{r \rightarrow 0+} \dfrac{u(x+r\, e_\theta) - u(x)}{r} 
\eq
where $e_\theta  = (\cos \theta  , \sin \theta) $.


\begin{lemma}
Let $u$ be a convex function.  Then for every $x$ in its domain and every $\theta\in[0,2\pi)$, the one-sided derivative $\partial_\theta u(x)$ is defined.  Moreover, the one-sided derivatives are continuous in $\theta$.
\end{lemma}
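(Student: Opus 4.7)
The plan is to prove existence via the classical monotonicity of difference quotients for convex functions, and to obtain continuity in $\theta$ by identifying $\partial_\theta u(x)$ with the support function of the subdifferential $\partial u(x)$.

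For existence, I would fix $x$ in the domain of $u$ and a direction $\theta \in [0,2\pi)$, and study the difference quotient $g(r) := [u(x+r e_\theta) - u(x)]/r$ for $r \neq 0$. The convexity of $u$ yields the three-chords inequality: for any three collinear points parametrised by $r_1 < r_2 < r_3$ along the line $x + r e_\theta$, the slopes of the secants satisfy a monotonicity relation. As a consequence, $g(r)$ is monotone non-decreasing in $r > 0$, and is bounded below by $g(-r_0)$ for any fixed $r_0 > 0$ small enough that $x - r_0 e_\theta$ lies in the domain of $u$. Since a bounded monotone family converges as $r \to 0^+$, the one-sided derivative $\partial_\theta u(x)$ exists and is finite.

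For continuity in $\theta$, the plan is to invoke the standard identification
\[ \partial_\theta u(x) = \sup_{p \in \partial u(x)} p \cdot e_\theta, \]
which expresses the one-sided directional derivative as the support function of $\partial u(x)$ evaluated at $e_\theta$. Since $u$ is a finite convex function on $\R^2$, the subdifferential $\partial u(x)$ is a nonempty, convex, compact subset of $\R^2$ (boundedness follows from the local Lipschitz continuity of $u$ near $x$). The support function of such a set is Lipschitz continuous on $\R^2$ with Lipschitz constant bounded by $\sup_{p \in \partial u(x)} \|p\|$. Composing with the Lipschitz map $\theta \mapsto e_\theta$ gives continuity of $\theta \mapsto \partial_\theta u(x)$. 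Alternatively, extending $v \mapsto \partial_v u(x)$ to all directions $v \in \R^2$ by positive homogeneity produces a sublinear (hence convex) finite function on $\R^2$, which is automatically locally Lipschitz, and the restriction to the unit circle yields the desired continuity.

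The main obstacle is essentially bookkeeping: all of the ingredients are classical consequences of convex analysis, and the only subtle point is the identification of $\partial_\theta u(x)$ with the support function of $\partial u(x)$. I expect that step to be handled by quoting \cite{hormander} or a similar reference on convex functions. Since the lemma statement concerns convex $u$ on a Euclidean domain (the setting of this paper is $\R^2$), no issues of boundary points or unbounded subdifferentials arise.
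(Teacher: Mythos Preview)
Your plan is correct. The paper's own proof is much terser: it observes that a convex function is (locally) Lipschitz continuous and then defers to a citation (\cite{Shapiro}) for the existence and continuity in $\theta$ of the one-sided directional derivatives. Your route is more self-contained: you obtain existence from the monotonicity of difference quotients and obtain continuity by identifying $\partial_\theta u(x)$ with the support function of the compact convex set $\partial u(x)$, which is automatically Lipschitz in the direction. Both arguments are standard convex analysis; yours exposes the mechanism explicitly and avoids relying on an external reference, while the paper's version simply keeps the exposition short. One small remark: your appeal to \cite{hormander} for the support-function identity is reasonable, but the more commonly cited source for that formula is Rockafellar's \emph{Convex Analysis} (Theorem~23.4); either citation would be acceptable here.
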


\begin{proof}
The function $u$ is convex and therefore Lipschitz continuous.  Thus one-sided directional derivatives are continuous as in~\cite{Shapiro}.
\end{proof}

This allows us to use a local characterisation to rewrite the subgradient measure of a convex function at a point $x$ in its domain.

\begin{theorem}[Characterisation of subgradient measure]\label{thm:subgradient}
Let $u$ be a convex function.  Then at any point $x$ in its domain, the subgradient measure is equivalent to
\bq\label{eq:subgradmeasure}
\abs{\partial u (x)} = M[u](x) \equiv
  \int_0^{2\pi} \frac{1}{2}\left(R_+(x,\theta)^2 - R_-(x,\theta)^2\right)^+\,d\theta,
\eq
where we define
\bq\label{eq:Bp}
R_+(x,\theta) =  \inf_{  \theta' \in ( \theta-\frac{\pi}{2},  \theta+\frac{\pi}{2} )} 
  \frac{(\partial_{\theta'} u(x))^+}{ \cos(\theta - \theta')},
\eq
\bq\label{eq:Bm}
R_-(x,\theta) =  
 \sup_{  \theta' \in ( \theta-\frac{\pi}{2},  \theta+\frac{\pi}{2} )} \frac{(-\partial_{\theta'+\pi} u(x))^+ }{\cos(\theta - \theta') } .
\eq
\end{theorem}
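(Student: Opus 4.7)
The plan is to identify $\theta \mapsto \partial_\theta u(x)$ with the support function of the convex set $K := \partial u(x) \subset \R^2$, and then compute $|K|$ in polar coordinates centred at the origin. The starting observation, which is a standard fact in convex analysis for a proper convex function finite near $x$, is that
\[
\partial_\theta u(x) \;=\; \sup_{p \in \partial u(x)} p \cdot e_\theta,
\]
so $\theta \mapsto \partial_\theta u(x)$ is the restriction of the support function of $K$ to the unit circle. In particular
\[
K \;=\; \bigcap_{\theta'' \in [0,2\pi)} \bigl\{\, p \in \R^2 : p \cdot e_{\theta''} \le \partial_{\theta''} u(x) \,\bigr\}.
\]

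Next I would fix $\theta$ and describe the intersection of $K$ with the ray $\{r e_\theta : r \ge 0\}$. A point $r e_\theta$ lies in $K$ iff $r\cos(\theta-\theta'') \le \partial_{\theta''} u(x)$ for every $\theta''$. Splitting the range by the sign of $\cos(\theta-\theta'')$: for $\theta'' = \theta' \in (\theta-\pi/2,\theta+\pi/2)$ the cosine is positive and the constraint is an upper bound on $r$ of the form $\partial_{\theta'} u(x)/\cos(\theta-\theta')$; for $\theta''$ in the opposite open arc, the substitution $\theta'' = \theta'+\pi$ converts the constraint into a lower bound on $r$ of the form $-\partial_{\theta'+\pi} u(x)/\cos(\theta-\theta')$ for $\theta' \in (\theta-\pi/2,\theta+\pi/2)$; the two boundary values $\theta \pm \pi/2$ contribute $r$-independent conditions that hold by the definition of the support function. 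Combined with $r \ge 0$, this shows
\[
\{\, r \ge 0 : r e_\theta \in K \,\} \;=\; [\,R_-(x,\theta),\,R_+(x,\theta)\,],
\]
where the positive-part truncations in \eqref{eq:Bp}-\eqref{eq:Bm} both enforce $R_\pm \ge 0$ and collapse the interval to the empty set precisely when the ray misses $K$. Continuity in $\theta$ of the one-sided derivatives (from the preceding lemma) makes $R_\pm$ measurable in $\theta$.

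To finish, I would apply Fubini's theorem in polar coordinates:
\[
|\partial u(x)| \;=\; \int_{\R^2}\! \mathds{1}_K(p)\,dp \;=\; \int_0^{2\pi}\!\int_{R_-(x,\theta)}^{R_+(x,\theta)} r\,dr\,d\theta \;=\; \int_0^{2\pi} \tfrac{1}{2}\bigl(R_+(x,\theta)^2 - R_-(x,\theta)^2\bigr)^+ d\theta,
\]
which is exactly $M[u](x)$.

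The main technical obstacle is the bookkeeping around the $(\cdot)^+$ truncations and the associated degenerate configurations: the origin lying strictly inside $K$ (then every ray meets $K$ from $r=0$ and one must recover $R_-=0$), a ray that entirely misses $K$ (the truncations must force $R_+ \le R_-$ so the integrand vanishes), and the lower-dimensional cases in which $K$ is a point (when $u$ is differentiable at $x$) or a line segment (when the epigraph has an edge direction at $x$). In each situation one must verify that each positive-part correctly deactivates the constraints that would otherwise prescribe a negative upper bound or an automatically-satisfied lower bound. This is routine but tedious; once it is dispatched, the polar integration gives the identity immediately.
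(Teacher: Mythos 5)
Your proof is correct and takes essentially the same route as the paper: identify $\theta\mapsto\partial_\theta u(x)$ with the support function of $K=\partial u(x)$, describe the ray $\{re_\theta : r\ge 0\}\cap K$ by the radial bounds $R_\pm(x,\theta)$, and integrate in polar coordinates. The one imprecision is your claim that the constraints at the endpoint angles $\theta\pm\tfrac{\pi}{2}$ ``hold by the definition of the support function'' --- they amount to $\partial_{\theta\pm\pi/2}u(x)\ge 0$, which can fail (e.g.\ $K=\{(0,1)\}$, $\theta=0$); as in the paper, they are disposed of via continuity of the one-sided derivatives, since a failing endpoint constraint forces $(\partial_{\theta'}u(x))^+=0$ for nearby $\theta'$ in the open interval and hence $R_+(x,\theta)=0$, so the positive part kills that ray's contribution.
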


\begin{proof}
If $u$ is convex, the subgradient can be expressed as
\[ 
\partial u (x) = \left\{ y \in \R^2 \mid  y \cdot e_\theta \le \partial_\theta u(x) \, \forall \theta \in [0 ,2\pi)\right\},
\]
which is convex and compact set. 

We reformulate the subgradient measure in polar coordinates ($y\rightarrow (r,\theta')$). 
\begin{align*}
\partial u (x) & = \left\{ (r,\theta')\in \R_+\times [0,2\pi) \mid  r \, e_{\theta'}\cdot e_\theta \le  \partial_{\theta} u(x) , \, \, \forall \theta \in [0,2\pi)\right\} \\
& = \left\{ (r,\theta')\in \R_+\times [0,2\pi] \mid  r \,  \cos(\theta' - \theta)  \le  \partial_{\theta} u(x), \, \, \forall \theta \in [0,2\pi)\right\}  .
\end{align*} 

When $\theta \neq \theta'\pm\frac{\pi}{2}$, we can divide through by $\cos(\theta'-\theta)$ in the constraint.  Because of the continuity of the one-sided derivatives, it is sufficient to check the constraint for values of $\theta$ in the open set $(\theta'-\frac{\pi}{2},  \theta'+\frac{\pi}{2})$.  Thus the subgradient can be rewritten as
\[\partial u (x) = \left\{ (r,\theta')\in \R_+\times [0,2\pi) \mid  \frac{-\partial_{\theta+\pi} u(x) }{\cos(\theta' - \theta) }    \le  r    \le  
  \frac{\partial_{\theta} u(x)}{ \cos(\theta' - \theta)}   \, \, \forall \, \theta \in \left( \theta'-\frac{\pi}{2},  \theta'+\frac{\pi}{2}\right) \right\}  .
\]
Since $r$ must be positive, this is equivalent to
\[
\partial u (x) = \left\{ (r,\theta')\in \R_+\times [0,2\pi) \mid  \frac{(-\partial_{\theta+\pi} u(x))^+ }{\cos(\theta' - \theta) }    \le  r    \le  
  \frac{(\partial_{\theta} u(x))^+}{ \cos(\theta' - \theta)}   \, \, \forall\, \theta \in \left( \theta'-\frac{\pi}{2},  \theta'+\frac{\pi}{2}\right) \right\}  .
\]

Finally, by defining
 \[R_+(x,\theta') =  \inf_{  \theta \in ( \theta'-\frac{\pi}{2},  \theta'+\frac{\pi}{2} )} 
  \frac{(\partial_{\theta} u(x))^+}{ \cos(\theta' - \theta)},  \] 
	\[R_-(x,\theta') =  
 \sup_{  \theta \in ( \theta'-\frac{\pi}{2},  \theta'+\frac{\pi}{2} )} \frac{(-\partial_{\theta+\pi} u(x))^+ }{\cos(\theta' - \theta) }, \]
we can rewrite the subgradient as
\[
\partial u (x) = \left\{ (r,\theta')\in \R_+\times [0,2\pi) \mid  
 R_-(x,\theta')    \le  r    \le 
  R_+(x,\theta')  \right\} . \]

 We can integrate over this region, noticing that only values that satisfy $R_- \le R_+$ will contribute to the integral. This allows us to replace $R_-$ by $ \min\{R_-,R_+\} $ to avoid 
 negative values. 
 Then the measure of the subgradient can be expressed as
 
\begin{align*} 
\label{SGP}
|\partial u (x)| &= \int_{0}^{2\pi} \int_{\min(R_-(x,\theta'),R_+(x,\theta'))}^{ R_+(x,\theta') } r \, dr \, d\theta'\\
  &= 
\int_{0}^{2\pi}  \frac{1}{2} (R_+(x,\theta')^2-\min(R_-(x,\theta'),R_+(x,\theta'))^2 )\, d\theta'\\
 &= \int_0^{2\pi} \frac{1}{2}\left(R_+(x,\theta')^2 - R_-(x,\theta')^2\right)^+\,d\theta'.
\end{align*} 
Interchanging the roles of $\theta$ and $\theta'$, we recover the claim of Theorem~\ref{thm:subgradient}.
\end{proof}

\section{Approximation Scheme}\label{sec:scheme}
We now turn our attention to the construction of an approximation scheme for the \MA equation.  

\subsection{Monge-Amp\`ere operator and boundary conditions}
One of the challenges in numerically solving the \MA equation is enforcing convexity of the solution.  We follow the approach of~\cite{FroeseTransport} and replace the \MA operator by a ``convexified'' operator,
\[-\min\limits_{\abs{\nu} = 1, \nu\cdot\nu^\perp = 0}\left\{u_{\nu\nu}^+u_{\nu^\perp\nu^\perp}^+-u_{\nu\nu}^- - u_{\nu^\perp\nu^\perp}^-\right\}, \quad x\in X \backslash\bigcup\limits_{k=1}^K\{d_k\},\]
which encodes the convexity constraint.

A mixed Aleksandrov-viscosity solution can be described for this modified operator.  See Appendix~\ref{app:convexity} for a proof that the resulting solution is equivalent to the Aleksandrov solution.

The schemes used for the basic \MA operator and the Hamilton-Jacobi boundary conditions have been thoroughly described in~\cite{BFO_OTNum,FO_MATheory}.  We briefly summarise the scheme (Appendix~\ref{app:MA}) and turn our attention
to the discretisation used at the dirac points $d_k$. 

\subsection{Discretisation of subgradient measure}\label{sec:discMeasure}

We constructin this section  a monotone approximation of the subgradient operator at the Dirac points.

We use  an angular discretisation $\theta_i, \, i=1,\ldots,N$ of the periodic interval $[0,2\pi)$. This does not need to be a uniform discretisation, but is instead chosen so that whenever the point $x$ lives on the grid, there exists a value $l_i>0$ such that $x\pm l_ihs_{\theta_i}$ also lies on the grid. 
 The resulting angular resolution is
\[d\theta = \max\limits_{1 \leq i \leq N} d\theta_i = \max\limits_{1 \leq i \leq N}\left\{\frac{1}{2}(\theta_{i+1}-\theta_{i-1})\right\}.\]
This allows us to approximate one-sided directional derivatives at the Dirac location $d_k$ by
\bq\label{eq:onesided} \partial_{\theta_i} u(d_k)  \simeq \frac{u(d_k+l_ihe_{\theta_i}) - u(d_k)}{l_ih} \eq
where we define
\[ u_i = u(d_k+l_ihe_{\theta_i}), \quad u_{-i} = u(d_k+l_ihe_{\theta_i+\pi}), \quad u_0 = u(d_k). \]

%

This allows $R_+$ and $R_-$ to be approximated by
\begin{equation} 
\label{SGPD}
\begin{array} {l} 
{R}^\rho_+(d_k,\theta_i) =  \inf\limits_{  \theta_j \in ( \theta_i-\frac{\pi}{2},  \theta_i+\frac{\pi}{2} )} 
  \frac{(u_j -  u_0  )^+}{l_jh\, \cos(\theta_i- \theta_j)}  \\
  {R}^\rho_-(d_k,\theta_i) =  \inf\limits_{  \theta_j \in ( \theta_i-\frac{\pi}{2},  \theta_i+\frac{\pi}{2} )} 
  \frac{(u_0 -  u_{-j}  )^+}{ l_jh\, \cos(\theta_i- \theta_j)} .
\end{array} 
\end{equation} 

Finally, we  discretise the integral : 
\begin{equation} 
\label{eq:subgradDisc}
M[ u ](d_k) \simeq   \sum_{i=1}^{N} \frac{1}{2}d\theta_i \left(R^\rho_+(d_k,\theta_i)^2-R^\rho_-(d_k,\theta_i)^2 \right)^+.
\end{equation}

We can write the combined approximation scheme as 
\[ F^\rho(x,u(x),u(\cdot)) = 0 \]
where $\rho$ encodes the discretisation parameters $h, d\theta$.  For consistency, we need to choose $d\theta=d\theta(h)$ so that $d\theta, h/d\theta\to0$ as $h\to0$. 

\subsection{Properties of the approximation scheme}

We establish several important properties of the approximation scheme including consistency, monotonicity, and stability of the solutions.  


\begin{lemma}[Consistency]\label{lem:consistent}
Let $x\in\R^2$, $\phi\in\test$, and $\rho>0$.  Then as $\rho\to0$, the approximation scheme
\[ F^\rho(x,\phi(x),\phi(\cdot)) \to F(x,\phi(x),\nabla\phi(x),D^2\phi(x)). \]
\end{lemma}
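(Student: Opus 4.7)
The plan is to split the argument according to the piecewise definition of $F$ and $F^\rho$. For $x \in X^\circ \setminus \bigcup_k\{d_k\}$, both operators reduce to the (convexified) \MA operator, and the required consistency is exactly the wide-stencil property established in \cite{BFO_OTNum,FO_MATheory} and summarised in Appendix~\ref{app:MA}; I would simply invoke that result. The case $x\in\partial X$, where the operators reduce to the signed-distance Hamilton--Jacobi boundary condition together with the mean correction, is likewise handled by the same references.

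The genuinely new case is $x = d_k$, where the task is to show $M^\rho[\phi](d_k) \to M[\phi](d_k)$ as $\rho=(h,d\theta)\to 0$. I would proceed in three steps. First, for smooth $\phi$ a Taylor expansion at $d_k$ gives
\[
\frac{\phi(d_k+l_j h\, e_{\theta_j})-\phi(d_k)}{l_j h} = \partial_{\theta_j}\phi(d_k) + \bO\!\left(l_j h\,\|\phi\|_{C^2}\right),
\]
and the stencil lengths $l_j h$ are controlled by the compatibility condition $h/d\theta \to 0$ (the grid-alignment constraint forces $l_j h$ to be small relative to $h/d\theta$), so the discrete one-sided differences converge to the one-sided directional derivatives uniformly in $j$.

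Second, fixing $\theta_i$, I would show $R_\pm^\rho(d_k,\theta_i)\to R_\pm(d_k,\theta_i)$. Because $\phi\in C^2$, the maps $\theta'\mapsto (\partial_{\theta'}\phi(d_k))^+/\cos(\theta_i-\theta')$ and $\theta'\mapsto (-\partial_{\theta'+\pi}\phi(d_k))^+/\cos(\theta_i-\theta')$ are continuous on $(\theta_i-\pi/2,\theta_i+\pi/2)$, and the discrete angles become dense in this interval as $d\theta\to 0$. Using the uniform finite-difference bound from the first step to replace the discrete difference quotients by the corresponding directional derivatives, up to an error that vanishes uniformly, the discrete infimum converges to the continuous one by a standard continuity-plus-density argument. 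Third, the sum in \eqref{eq:subgradDisc} is a Riemann sum for the integral \eqref{eq:subgradmeasure}. The limiting integrand $\theta\mapsto \frac{1}{2}(R_+(d_k,\theta)^2-R_-(d_k,\theta)^2)^+$ is continuous for smooth $\phi$, so the Riemann sum converges to $M[\phi](d_k)$, which by Theorem~\ref{thm:subgradient} equals $|\partial\phi(d_k)|$, giving the required limit.

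The hard part is coupling the two limits in the second step: the finite-difference error in the numerator of $R_\pm^\rho$ must vanish at the same time as the angular grid densifies, which is precisely the role of the compatibility condition $d\theta,\,h/d\theta\to 0$. Carrying this out cleanly will require sandwiching the discrete infimum between continuous infima evaluated on slightly perturbed data, with the perturbation scale given by the uniform finite-difference error, and then exploiting the continuity modulus of $\theta'\mapsto\partial_{\theta'}\phi(d_k)$ to close the estimate.
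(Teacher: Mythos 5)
Your proposal is correct and follows essentially the same route as the paper, whose proof is just a terse version of your argument: first-order accuracy of the one-sided difference quotients at the Dirac points (with the stencil length $l_jh=\bO(h/d\theta)\to0$ under the compatibility condition), consistency of the quadrature for the angular integral, and citation of \cite{BFO_OTNum,FO_MATheory} for the \MA and Hamilton--Jacobi points. The only nitpick is your parenthetical that $l_jh$ is ``small relative to $h/d\theta$''; in fact $l_jh$ is of the same order as $h/d\theta$, and it is precisely the hypothesis $h/d\theta\to0$ that makes the Taylor error vanish.
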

\begin{proof}
At Dirac points $d_k$, the scheme is approximated by replacing one-sided derivatives with a first-order accurate approximation.  The approximation of the integral  is also consistent.  At other points, derivatives are replaced by consistent approximations as established in~\cite{BFO_OTNum,FO_MATheory}.
\end{proof}

\begin{lemma}[Monotonicity]\label{lem:monotonicity}
For every $\rho>0, x\in\bar{ X }, s\in\R$, and bounded $u \leq v$ we have
\[ F^\rho(x,s,u) \geq F^\rho(x,s,v). \]
\end{lemma}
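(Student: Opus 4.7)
The plan is to verify monotonicity piecewise, according to the three regimes in the definition~\eqref{eq:MA_visc}: the interior away from Diracs, the Dirac points themselves, and the boundary $\partial X$. For the interior and boundary pieces, the scheme is the one used in~\cite{BFO_OTNum,FO_MATheory}, whose monotonicity is already established there; so the only genuinely new work is at the Dirac points, where the discretisation~\eqref{SGPD}--\eqref{eq:subgradDisc} of the subgradient measure $M[u](d_k)$ has to be checked from scratch.

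At a Dirac $d_k$, the operator is $F^\rho = -M[u](d_k) + \alpha_k$, so monotonicity amounts to showing that $M[u](d_k)$ is monotone increasing in $u$ at neighbouring grid nodes, with $u(d_k) = s$ held fixed. The first step is to track signs through the construction. Assume $u \leq v$ with $u(d_k)=v(d_k)=s$. Then for each stencil direction $\theta_j$ the numerator in $R_+^\rho$ satisfies $(u_j - s)^+ \leq (v_j - s)^+$, so taking the infimum over $\theta_j$ gives $R_+^\rho(d_k,\theta_i)[u] \leq R_+^\rho(d_k,\theta_i)[v]$. For $R_-^\rho$, the opposite inequality $(s-u_{-j})^+ \geq (s - v_{-j})^+$ holds because $u_{-j}\leq v_{-j}$, and the infimum then preserves this, yielding $R_-^\rho(d_k,\theta_i)[u] \geq R_-^\rho(d_k,\theta_i)[v]$.

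The second step is to pass these bounds through the squaring, subtraction, positive-part and summation steps in~\eqref{eq:subgradDisc}. Since $R_\pm^\rho\geq 0$, monotonicity of $x\mapsto x^2$ on $[0,\infty)$ gives $R_+^\rho[u]^2 \leq R_+^\rho[v]^2$ and $R_-^\rho[u]^2 \geq R_-^\rho[v]^2$; subtracting,
\[
R_+^\rho(d_k,\theta_i)[u]^2 - R_-^\rho(d_k,\theta_i)[u]^2 \;\leq\; R_+^\rho(d_k,\theta_i)[v]^2 - R_-^\rho(d_k,\theta_i)[v]^2.
\]
The positive-part map is non-decreasing, the weights $\tfrac{1}{2}d\theta_i$ are positive, and the sum of non-decreasing quantities is non-decreasing, so $M[u](d_k) \leq M[v](d_k)$ and therefore $F^\rho(d_k,s,u) = -M[u](d_k)+\alpha_k \geq -M[v](d_k)+\alpha_k = F^\rho(d_k,s,v)$.

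For the remaining cases, I would simply summarise: at interior points $x\in X\setminus\bigcup_k\{d_k\}$ the convexified MA scheme from~\cite{FroeseTransport,BFO_OTNum} is a minimum over wide-stencil second-difference operators of the form $-(D_{\nu\nu}^h u)^+(D_{\nu^\perp\nu^\perp}^h u)^+ + (D_{\nu\nu}^h u)^- + (D_{\nu^\perp\nu^\perp}^h u)^-$, each of which is built from centred second differences that are monotone in $u$ because the diagonal coefficient (at $x$) is eliminated by fixing $u(x)=s$, and the off-diagonal coefficients have the correct sign; on the boundary the Hamilton--Jacobi operator $H(\nabla u)-\langle u\rangle$ uses the upwind gradient discretisation whose monotonicity was verified in~\cite{BFO_OTNum}, while the global mean $\langle u\rangle$ enters with a negative sign, which increases $F^\rho$ when $u$ decreases. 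The main subtlety is really only the sign bookkeeping for $R_\pm^\rho$; once that is done the rest follows by citation. Combining the three cases gives $F^\rho(x,s,u)\geq F^\rho(x,s,v)$ for all $x\in\bar X$.
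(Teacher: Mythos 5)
Your proof is correct and follows essentially the same route as the paper: cite the established monotonicity of the Monge-Amp\`ere and Hamilton--Jacobi discretisations away from the Diracs, and verify directly that $R_+^\rho$ is non-decreasing and $R_-^\rho$ non-increasing in the neighbouring values (with $u(d_k)=s$ fixed), so that squaring, taking positive parts and summing make $M[u](d_k)$ non-decreasing and hence $F^\rho$ non-increasing in $u$. Your sign bookkeeping at the Dirac points is in fact slightly more explicit than the paper's own argument, but the content is the same.
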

\begin{proof}
The monotonicity of the discretisation of the \MA operator and Hamilton-Jacobi equation have been established in~\cite{BFO_OTNum,FO_MATheory}, so we need only check monotonicity of the approximation of the subgradient measure~\eqref{eq:subgradDisc}.

Notice that the discrete version of 
\[
R^\rho_+(d_k,\theta_i) =  \inf\limits_{  \theta_j \in ( \theta_i-\frac{\pi}{2},  \theta_i+\frac{\pi}{2} )} 
  \frac{(u_j -  u_0  )^+}{l_jh\, \cos(\theta_i- \theta_j)}  \\
\] 
is a decreasing function of the value $u_0 = u(d_k)$ and an increasing function of the remaining function values $u_j$.  Since it vanishes for $u_j-u_0<0$, the squared term $(R^\rho_+)^2$ inherits the same monotonicity properties.  Similarily, $-(R^\rho_-)^2$ satisfies the appropriate monotonicity conditions.  Since the discrete version of the integral is a monotone combination of these,
\[M[u] (d_k) \simeq   \sum_{i=1}^{N} \frac{1}{2}d\theta_i \left(R^\rho_+(d_k,\theta_i)^2-R^\rho_-(d_k,\theta_i)^2 \right)^+,
  \]
the scheme is monotone.
\end{proof}

\begin{lemma}[Stability]\label{lem:stability}
For every $\rho>0$, the scheme $F^\rho(x,u(x),u(\cdot))=0$ has a unique solution $u^\rho(x)$, which is bounded in $\ell^\infty$ independently of $\rho$.
\end{lemma}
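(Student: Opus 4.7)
The plan has two parts: existence and uniqueness of $u^\rho$, and the uniform $\ell^\infty$ bound. Both rely on the monotonicity from Lemma~\ref{lem:monotonicity} together with the state constraint encoded in the Hamilton--Jacobi boundary operator and the mean-zero normalisation.

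For existence and uniqueness, I would invoke the fixed-point machinery developed for proper, monotone finite-difference schemes in \cite{BFO_OTNum, FO_MATheory}. View the scheme $F^\rho = 0$ as the zero set of a map $G^\rho$ on grid functions. At each grid point $x$, the local operator $F^\rho_x$ is monotone non-increasing in the surrounding grid values by Lemma~\ref{lem:monotonicity} and monotone non-decreasing in $u(x)$ itself: for the convexified Monge--Amp\`ere operator and the Hamilton--Jacobi boundary operator this is classical, while for the Dirac equation \eqref{eq:subgradDisc} it follows because $R_+^\rho(d_k,\theta_i)$ is non-increasing and $R_-^\rho(d_k,\theta_i)$ is non-decreasing in $u_0 = u(d_k)$. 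Strict properness is supplied by the global coupling through $\langle u \rangle$ in the boundary operator: adding a constant $c$ to every unknown strictly decreases $F^\rho$ at every boundary point. Strict properness together with monotonicity yields a discrete comparison principle, and existence then follows by a Perron-type construction or by a convergent damped Euler iteration, exactly as in the cited works.

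For the uniform bound, the key step is showing that $|\langle u^\rho \rangle|$ stays bounded independently of $\rho$. The Hamilton--Jacobi boundary operator combined with the monotone wide-stencil gradient discretisation forces the discrete gradient on $\partial X$ to lie in $\overline{B(0, 1 + \langle u^\rho \rangle + O(h))}$. Summing the discrete subgradient measures at the Diracs gives $\sum_k \alpha_k = \pi$, while by the discrete analogue of the argument in Lemma~\ref{lem:ViscAleks} the total discrete subgradient measure over $X$ is approximately the area of $B(0, 1 + \langle u^\rho \rangle)$. Pinching these two quantities forces $|\langle u^\rho \rangle|$ to be uniformly bounded. Convexity (enforced by the convexified Monge--Amp\`ere operator) then propagates the boundary gradient bound to the interior, giving a uniform discrete Lipschitz constant $L$. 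Combined with the mean-zero decomposition, this yields $\|u^\rho\|_{\ell^\infty} \le L \cdot \operatorname{diam}(X) + |\langle u^\rho \rangle|$ uniformly in $\rho$.

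The principal obstacle is making the discrete subgradient volume identity quantitative. At the continuous level Lemma~\ref{lem:ViscAleks} used exact additivity of Lebesgue measure over $\partial u(X)$, but the discrete scheme only satisfies \eqref{eq:subgradDisc} approximately, with errors from the angular quadrature, the finite-difference approximation of one-sided derivatives, and the wide-stencil width of the gradient discretisation on $\partial X$. Showing these errors remain $O(1)$ rather than growing as $\rho \to 0$ is the technical heart of the argument, and will likely require a quantitative consistency estimate beyond Lemma~\ref{lem:consistent} together with an exploitation of convexity to prevent angular quadrature errors from accumulating.
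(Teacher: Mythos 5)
The paper disposes of this lemma in a single line: it invokes Oberman's stability theory for degenerate elliptic (monotone and proper) schemes \cite{ObermanDiffSchemes}. In that framework, monotonicity (Lemma~\ref{lem:monotonicity}) plus the properness built into the discretisation (the $\langle u\rangle$ coupling, implemented through the $+h^2u_{i,j}$ term in \eqref{eq:discMA}) yields existence and uniqueness via a fixed-point/Euler-map argument, and the $\ell^\infty$ bound by comparison with constant grid functions. Your first paragraph is essentially this same route, so that part is sound and consistent with the paper.

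The gap is in your second part, which is the actual content of the lemma. You replace the abstract comparison argument by a discrete Aleksandrov mass-balance: sum the discrete subgradient measures at the Diracs, match them against the area of $B(0,1+\langle u^\rho\rangle)$, pinch to bound $\langle u^\rho\rangle$, then propagate a boundary gradient bound inward by convexity. As you yourself concede, the decisive step --- that the discrete subgradient volume identity holds with errors that stay $O(1)$ uniformly as $\rho\to 0$ --- is not proved, and it does not follow from Lemma~\ref{lem:consistent}: that lemma is pointwise consistency on a fixed smooth test function and gives no uniform control along the solution family $u^\rho$, which is exactly what a stability proof may not presuppose. Similarly, ``convexity propagates the boundary gradient bound to the interior'' treats $u^\rho$ as a convex function with the boundary subgradient behaviour of Lemma~\ref{lem:ViscAleks}, but the scheme only enforces a discretised convexified operator, and discrete solutions need not be convex grid functions in the sense required to run that continuous argument. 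So, as written, your proposal establishes existence and uniqueness but reduces the uniform $\ell^\infty$ bound to an unproven quantitative consistency statement; the bound should instead be extracted directly from monotonicity and properness of the scheme, as in \cite{ObermanDiffSchemes}, which is the paper's (one-line) proof.
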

\begin{proof}
The stability of degenerate elliptic (monotone) schemes was established by Oberman in~\cite{ObermanDiffSchemes}.
\end{proof}

\section{Numerical results} \label{sec:numerics}

We now provide several numerical tests of our formulation.  We use the following parameters in our computations.
\begin{itemize}
\item $N_X$: The number of grid points along each dimension of the domain.
\item $h$: The spatial stepsize along each dimension.
\item $w$: The maximum stencil width used in each direction, which is chosen to be $\floor{1/\sqrt{dx}}$.  The same stencil width is used for schemes at both the Dirac and non-Dirac points.
\item $N_Y$: The number of directions used to discretise the target set in the Hamilton-Jacobi equation, which is taken to be $4N_X$.
\end{itemize}

The discretised system of equations is solved using Newton's method.

\subsection{Comparison to viscosity solver}\label{sec:resultsVisc}

In our first example, we consider the problem of mapping a single Dirac, located at the origin and with total mass $\pi$, onto the unit circle.  We pose this problem in the domain $[-1,1]\times[-1,1]$.  The exact potential (up to an additive constant) is
\[ u_{ex}(x,y) = \sqrt{x^2+y^2}, \]
which is pictured in Figure~\ref{fig:OneDiracSol}.

We compute the solution in two ways:
\begin{enumerate}
\item Using the mixed Aleksandrov-viscosity formulation described in this paper.
\item Using a scheme designed for viscosity solutions~\cite{BFO_OTNum}, where the density at the Dirac mass is set to be
\[ f = \frac{4}{(w^2+(w-1)^2)h^2}. \]
\end{enumerate}

To evaluate the error, we shift the computed solution and the exact solution to have mean zero, then look at the maximum difference between the two solutions.  The resulting error is presented in Figure~\ref{fig:OneDiracError} and Table~\ref{table:OneDirac}.  In this radially symmetric example, both methods produce results that have first-order accuracy in the spatial resolution $h$.

We repeat the above experiment, this time placing two equally weighted masses in the domain at positions $(-0.5,0)$ and $(0.5,0)$.  In this case, the exact solution (up to an additive constant) is given by
\[ u_{ex}(x,y) = \begin{cases}  
\min\{\sqrt{(x+0.5)^2+y^2},\sqrt{(x-0.5)^2+y^2}\}, & \abs{x}>0.5\\
\abs{y}, & \text{otherwise}.
\end{cases}\]

The results are presented in Figure~\ref{fig:TwoDiracSol}-\ref{fig:TwoDiracError} and Table~\ref{table:OneDirac}.  Without the radial symmetry of the previous example, the viscosity formulation does not appear to the correct solution.  The Aleksandrov-viscosity scheme, on the other hand, demonstrates convergence with an accuracy of approximately $\bO(\sqrt{h})$.

\begin{figure}[htdp]
	\centering
	\subfigure[]{\includegraphics[width=.45\textwidth]{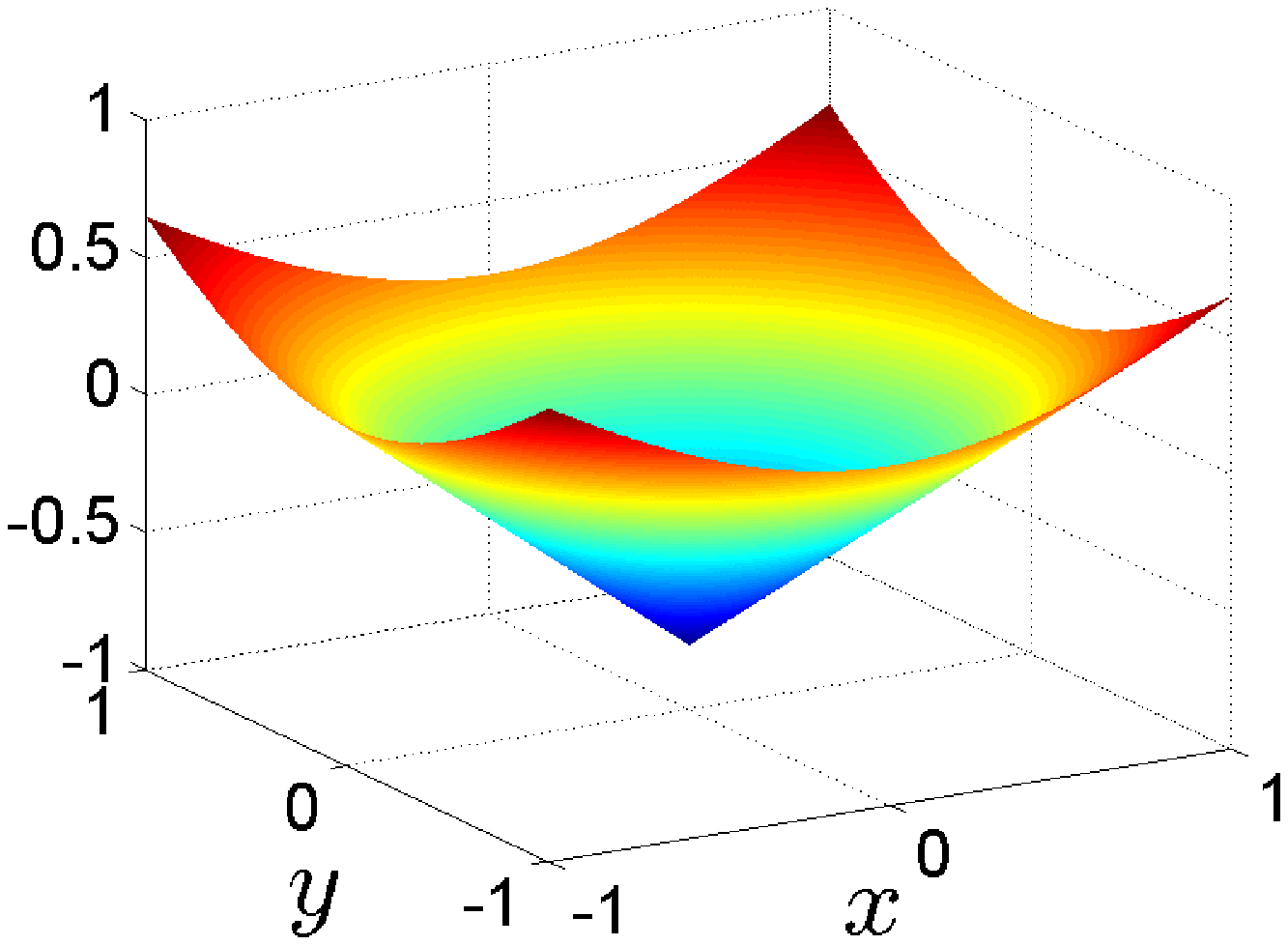}\label{fig:OneDiracSol}}
  \subfigure[]{\includegraphics[width=.45\textwidth]{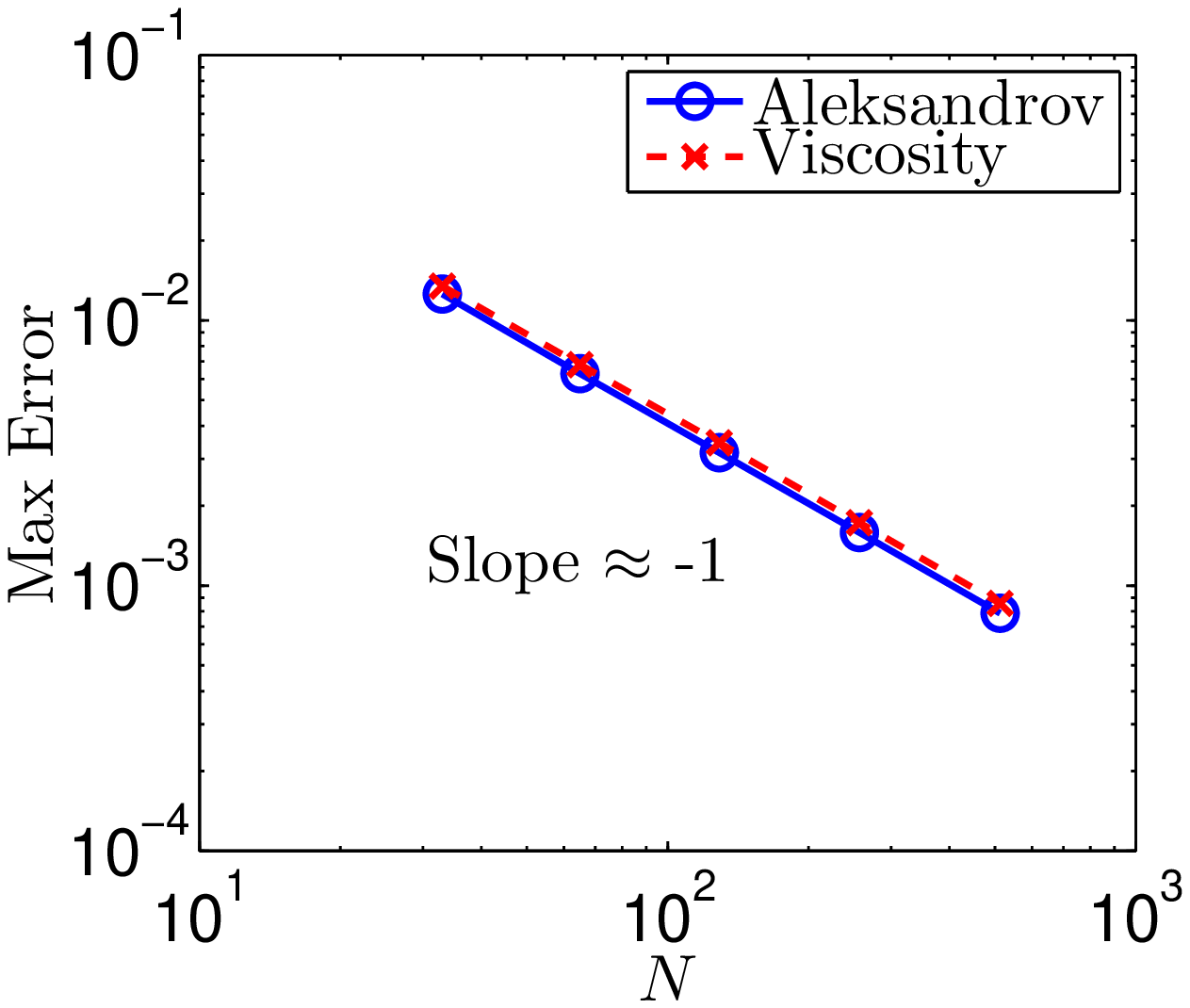}\label{fig:OneDiracError}}
  \subfigure[]{\includegraphics[width=.45\textwidth]{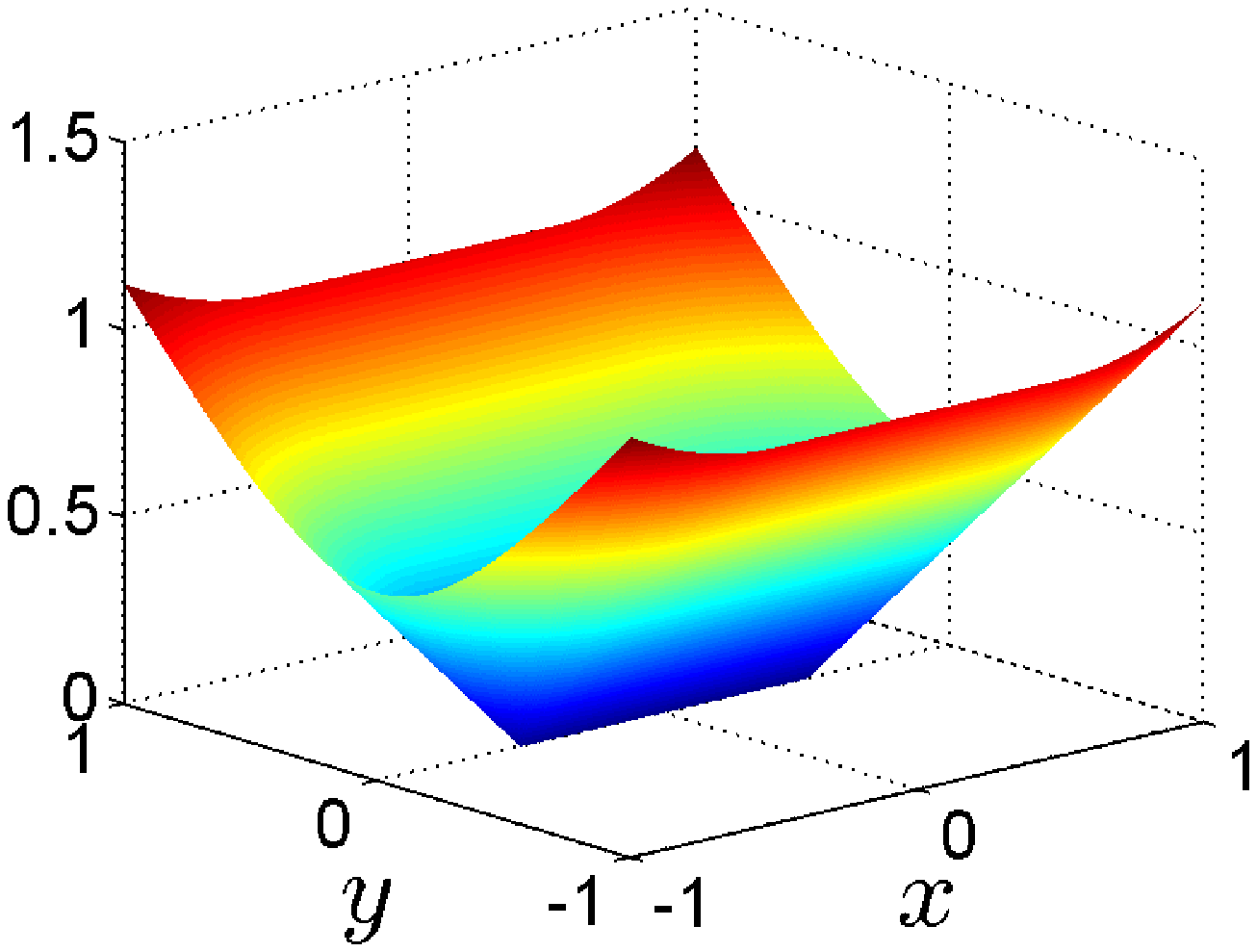}\label{fig:TwoDiracSol}}
  \subfigure[]{\includegraphics[width=.45\textwidth]{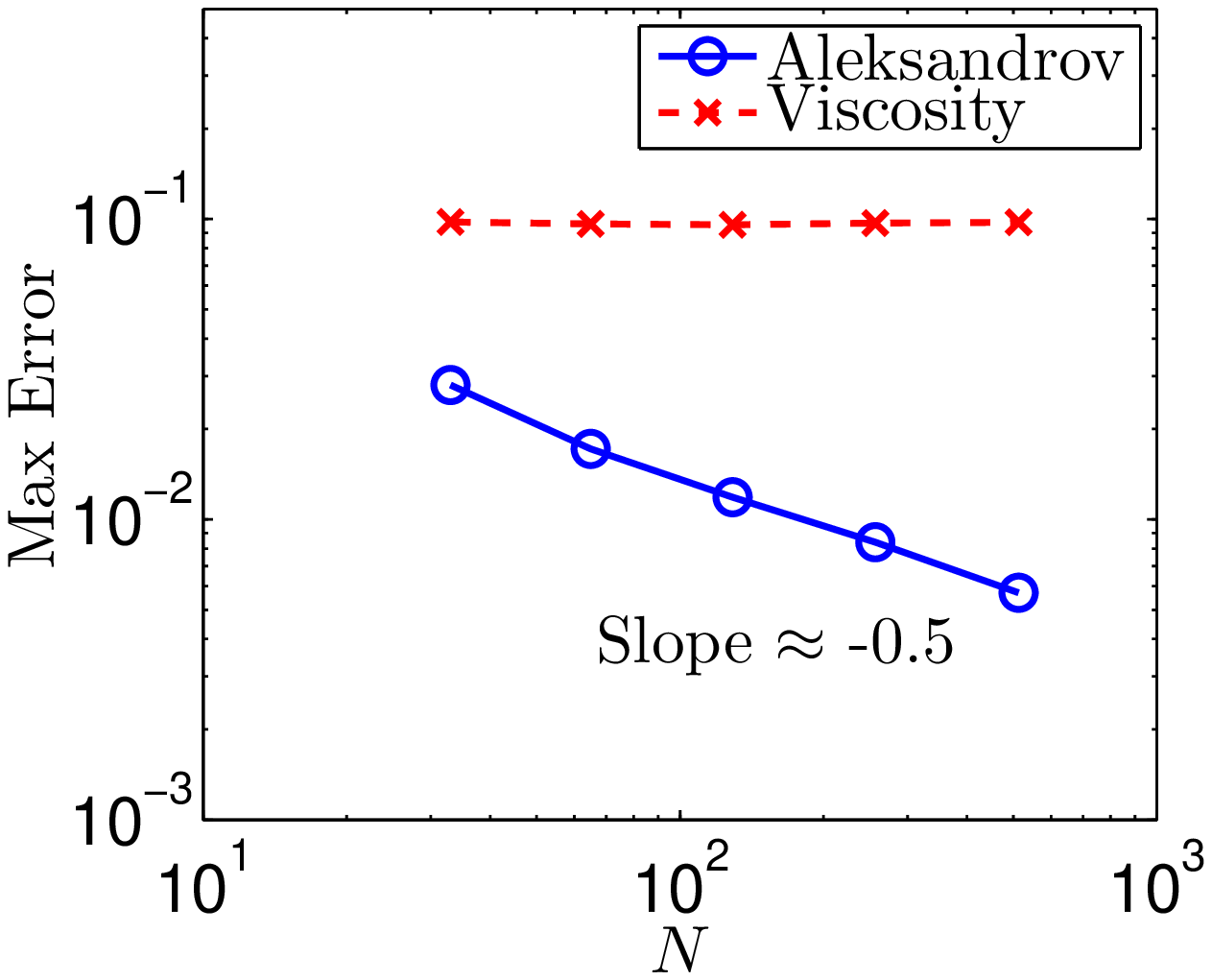}\label{fig:TwoDiracError}}
  	\caption{\subref{fig:OneDiracSol}~The potential for a single Dirac mass and \subref{fig:OneDiracError}~the maximum error in the computed solution for the Aleksandrov-viscosity and viscosity schemes
		\subref{fig:TwoDiracSol}~The potential for two Dirac masses and \subref{fig:TwoDiracError}~the maximum error in the computed solution for the Aleksandrov-viscosity scheme.}
  	\label{fig:Dirac}
\end{figure}

\begin{table}[htdp]\small
\begin{center}
\begin{tabular}{c|cc|cc}
$N_X$ &\multicolumn{4}{c}{Maximum Error} \\
  & \multicolumn{2}{c}{One Dirac} & \multicolumn{2}{c}{Two Diracs}\\
  & Aleksandrov-viscosity & Viscosity & Aleksandrov-viscosity & Viscosity\\
\hline
33  & $12.56\times10^{-3}$ & $13.44\times10^{-3}$ & $2.80\times10^{-2}$ & $9.78\times10^{-2}$ \\
65  & $6.29\times10^{-3}$ & $6.80\times10^{-3}$ & $1.72\times10^{-2}$ & $9.63\times10^{-2}$\\
129 & $3.17\times10^{-3}$ & $3.45\times10^{-3}$ & $1.12\times10^{-2}$ & $9.56\times10^{-2}$\\
257 & $1.58\times10^{-3}$ & $1.72\times10^{-3}$ & $0.84\times10^{-2}$ & $9.69\times10^{-2}$\\
513 & $0.79\times10^{-3}$ & $0.86\times10^{-3}$ & $0.57\times10^{-2}$ &$9.75\times10^{-2}$\\
\end{tabular}
\end{center}
\caption{Maximum error in the computed solution for the Aleksandrov-viscosity and viscosity schemes with one or two Dirac masses.}
\label{table:OneDirac}
\end{table}

\subsection{Comparison to exact solver}\label{sec:resultsFew}

In the next section, we set the location $d_k$ of several Dirac masses as well as the value of the potential $u_{ex}(d_k)$ at these masses.  
Then we use an exact construction of the cells $|C_k|$ provided by the Multi-Parametric Toolbox (MPT) for Matlab~\cite{MPT}.
 Finally, we use our Aleksandrov-viscosity method to solve the \MA equation with these weights and evaluate the maximum error in the potential at a Dirac mass,
\[ \max\limits_k\{\abs{u(d_k)-u_{ex}(d_k)}\}. \]

In our first example, we set three Dirac masses at
\[ d_1 = (-0.5,-0.5), \, d_2 = (0.5,-0.5), \, d_3 = (0.5,0.5) \]
with weights
\[ \alpha_1 =  1.17810586, \, \alpha_2 = 0.78540476,\, \alpha_3 = 1.17810586.\]
With these values, the potential $u$ should have the same value at each of the Dirac masses.

We repeat this example using five Dirac masses at
\[ d_1 = (0.5,0.5), \, d_2 = (0.5,-0.5), \, d_3 = (-0.5,0.5), \, d_4 = (-0.5,-0.5), \, d_5 = (0,0)\]
with weights
\[ \alpha_1 = 0.70539704, \, \alpha_2 =  0.56674540, \, \alpha_3 = 0.56674541,\]
\[ \alpha_4 = 1.142723415, \, \alpha_5 = 0.16000240. \]
At the Dirac masses, the potential should take on the values (up to a constant shift)
\[ u_{ex}(d_1) = u_{ex}(d_2) = u_{ex}(d_3) = 1, \, u_{ex}(d_4) = u_{ex}(d_5) = 0.8. \]

Finally, we place ten Dirac masses at
\[ d_1 = (0.5,0.5), \, d_2 = (0.5,-0.5), \, d_3 = (-0.5,0.5), \, d_4 = (-0.5,0.5), \]
\[d_5 = (0.25,0.25), \,
 d_6 = (0.25,-0.25), \, d_7 = (-0.25,0.25), \]
\[ d_8 = (-0.25,-0.25),\, d_9 = (0.75,0.6875), \, d_{10} = (-0.75,-0.75) \]
with weights
\[ \alpha_1 = 0.24497863, \, \alpha_2 = 0.59721306, \, \alpha_3 = 0.69141129, \, \alpha_4 = 0.23000000, \]
\[ \alpha_5 = 0.22500000, \,  
\alpha_6 = 0.22500000, \, \alpha_7 = 0.04500000, \]
\[ \alpha_8 = 0.04500000, \, \alpha_9 = 0.36462036, \, \alpha_{10} =  0.47337968. \]
At the Dirac masses, the potential should take on the values
\[ u_{ex}(d_1) = u_{ex}(d_2) = u_{ex}(d_3) = u_{ex}(d_4)= 1, \, u_{ex}(d_5) = u_{ex}(d_6) = 0.85, \]
\[ u_{ex}(d_7) = u_{ex}(d_8) = 0.9, \, u_{ex}(d_9) = u_{ex}(d_{10}) = 1.2. \]

In Figure~\ref{fig:ThreeDirac}, we picture the computed potential functions $u$, the images of each Dirac mass (with the colour indicating the relative weight assigned to each mass), and the error in the computed potential at the masses.  The error is also presented in Table~\ref{table:ThreeDirac}.  In each case, we observe convergence on the order of approximately $\sqrt{h}$.

Table~\ref{table:ThreeDirac} also provides the number of Newton iterations required for convergence, which depends weakly (approximately $\bO(M^{0.3})$ on average) on the total number of discretisation points $M = N_X^2$.

\begin{figure}[htdp]
	\centering
	\subfigure[]{\includegraphics[width=.3\textwidth]{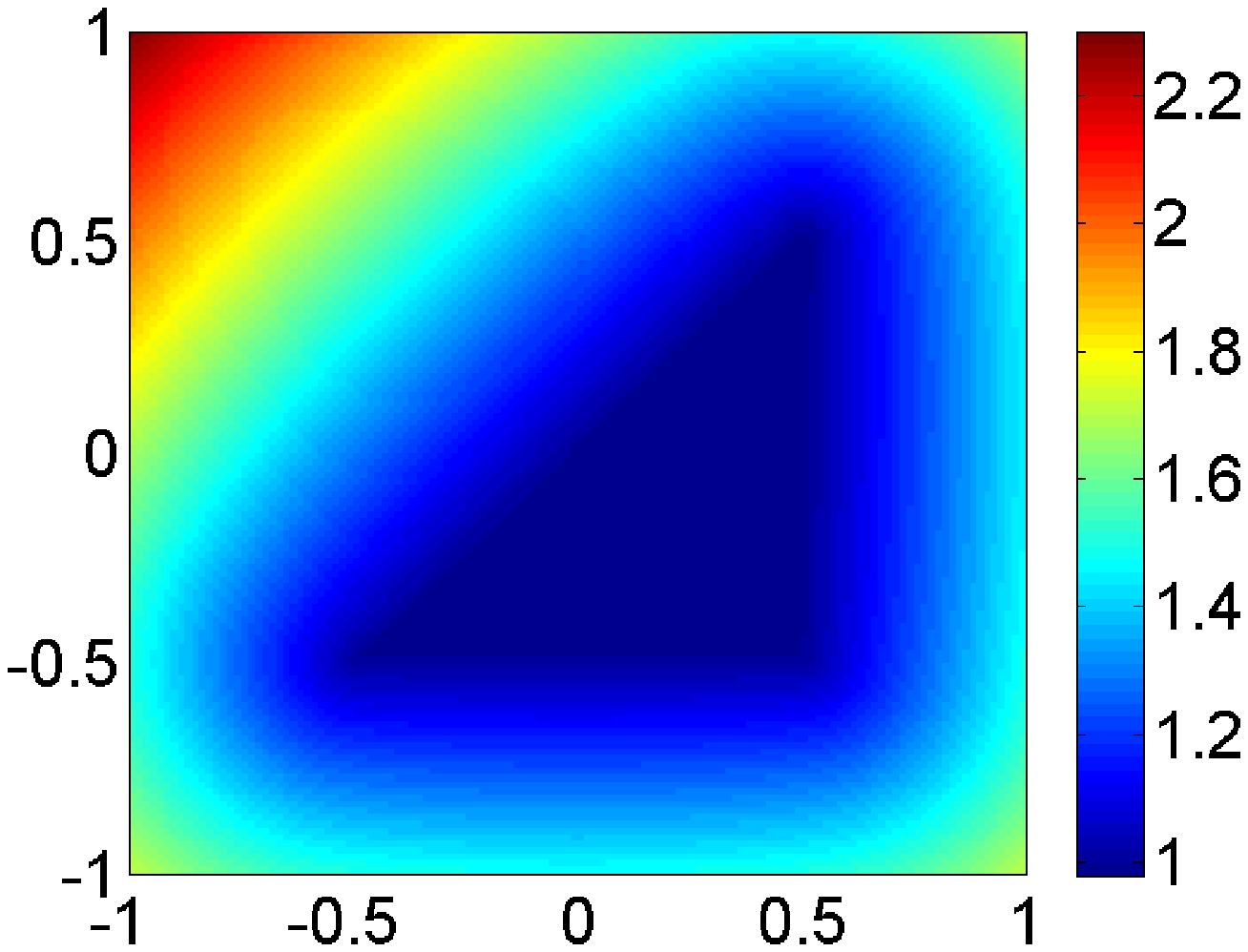}\label{fig:ThreeDiracSol}}
	\subfigure[]{\includegraphics[width=.3\textwidth]{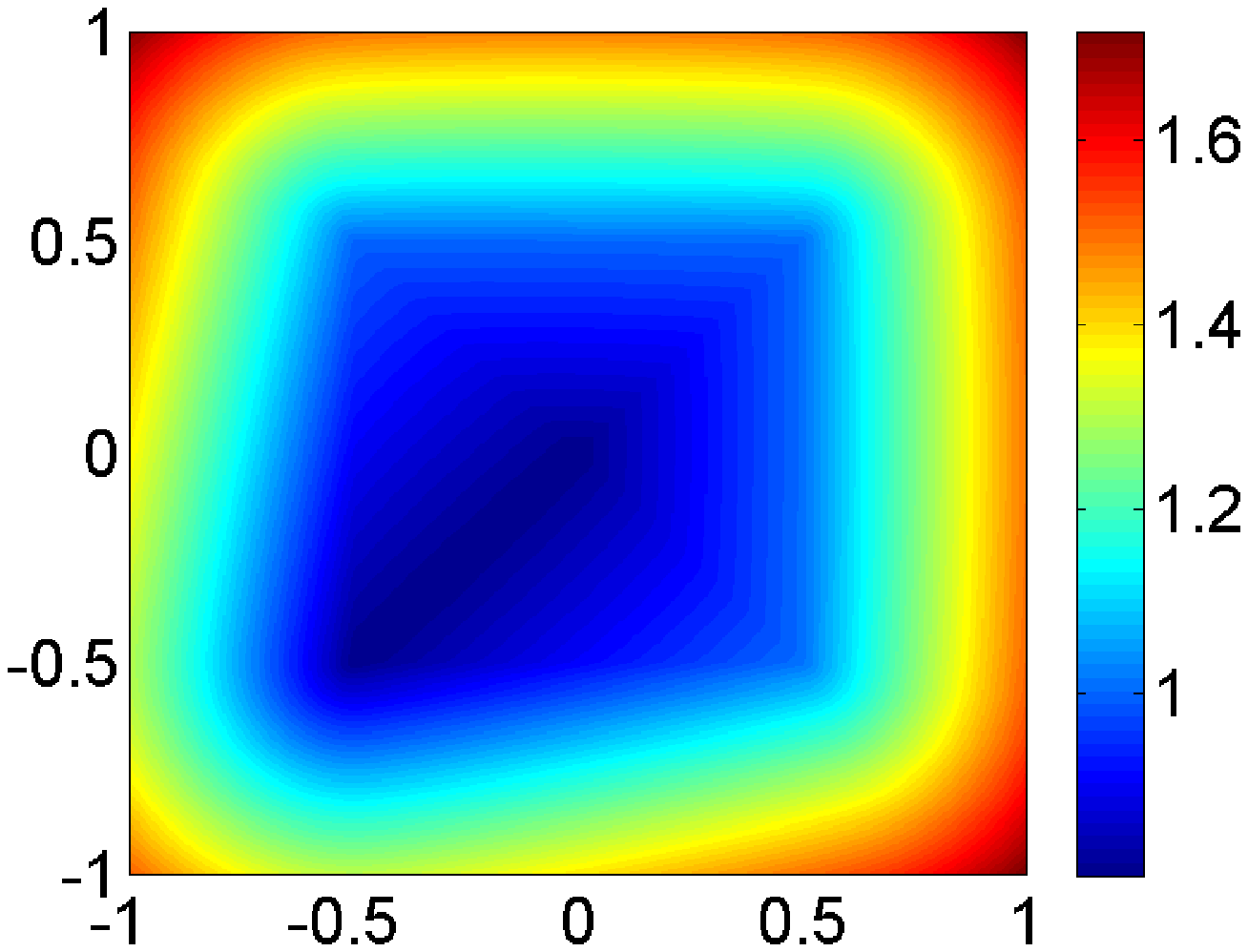}\label{fig:FiveDiracSol}}
	\subfigure[]{\includegraphics[width=.3\textwidth]{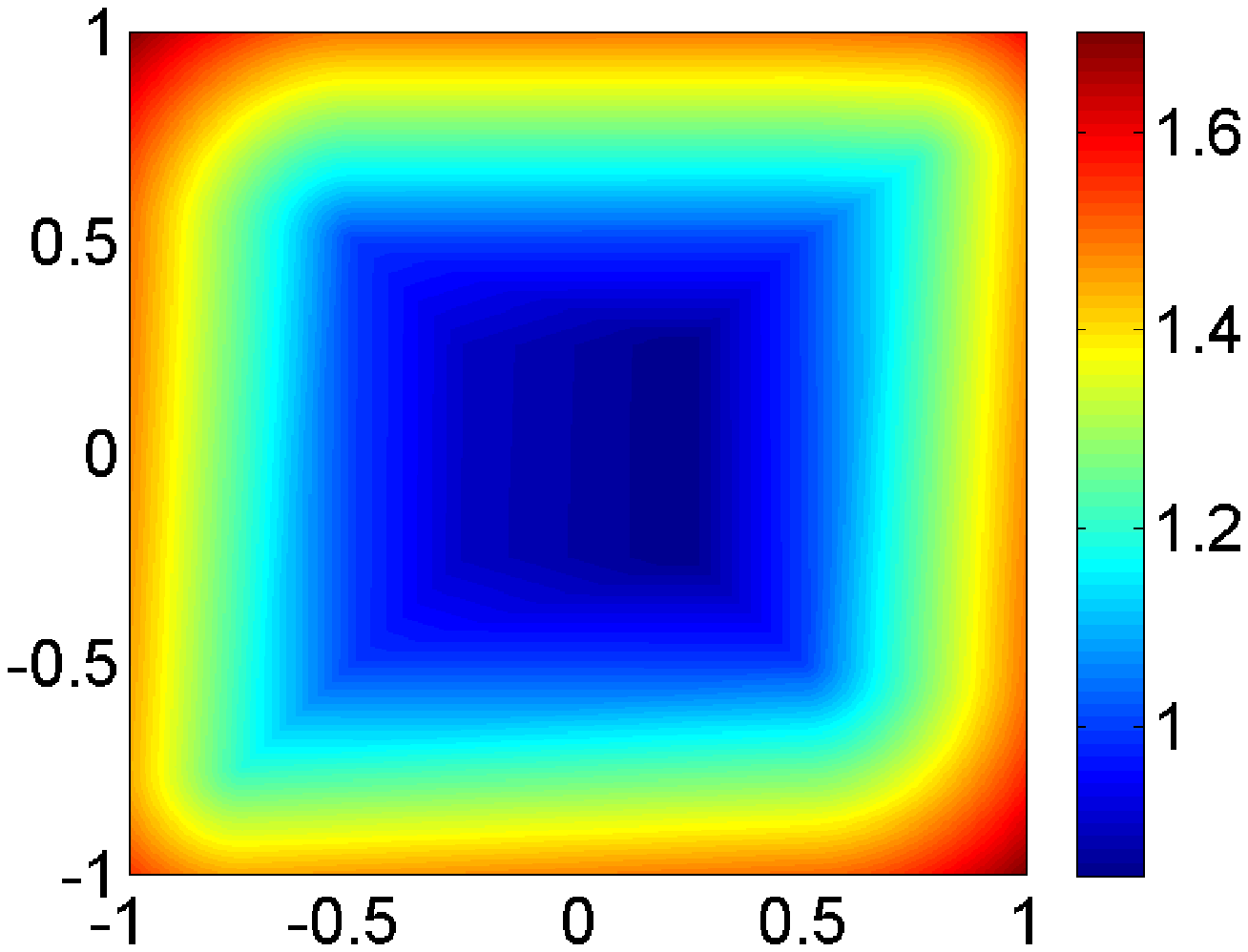}\label{fig:TenDiracSol}}
  \subfigure[]{\includegraphics[width=.3\textwidth]{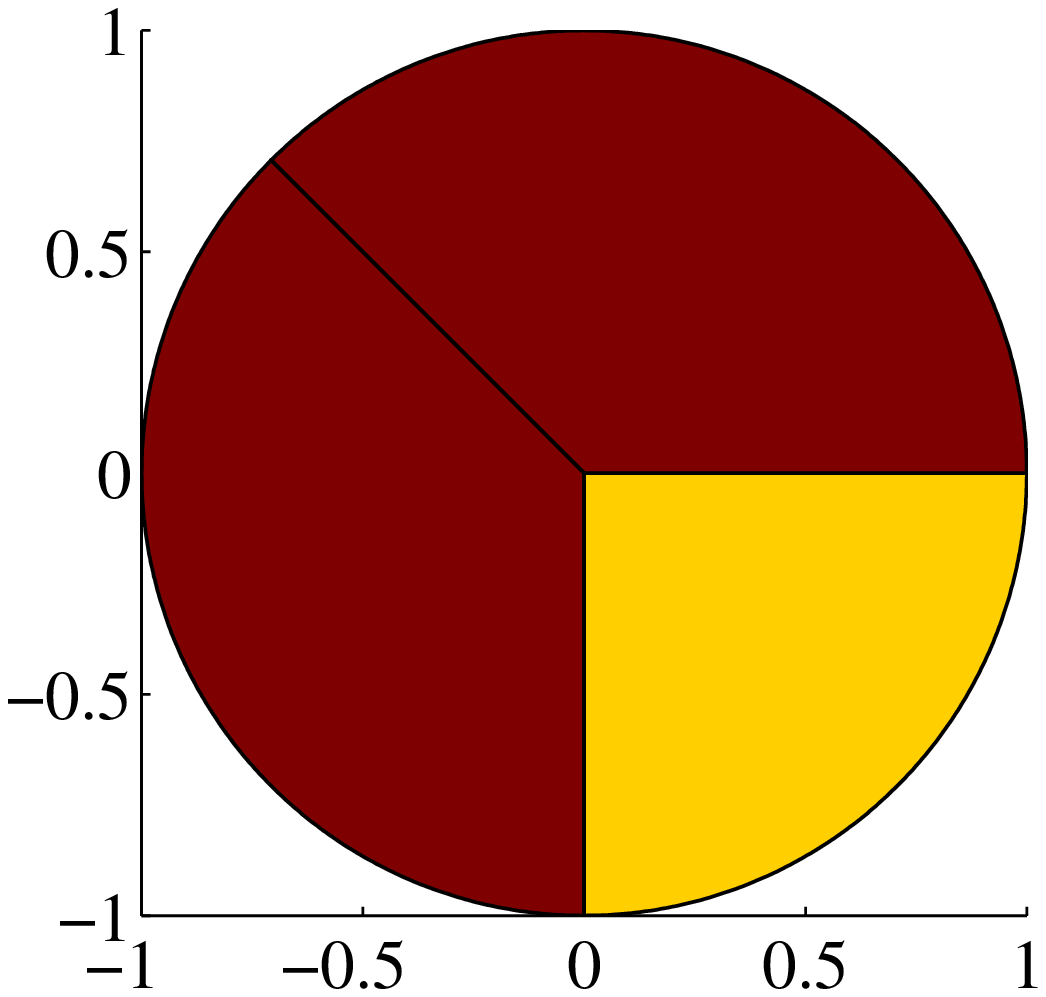}\label{fig:ThreeDiracRegions}}
  \subfigure[]{\includegraphics[width=.3\textwidth]{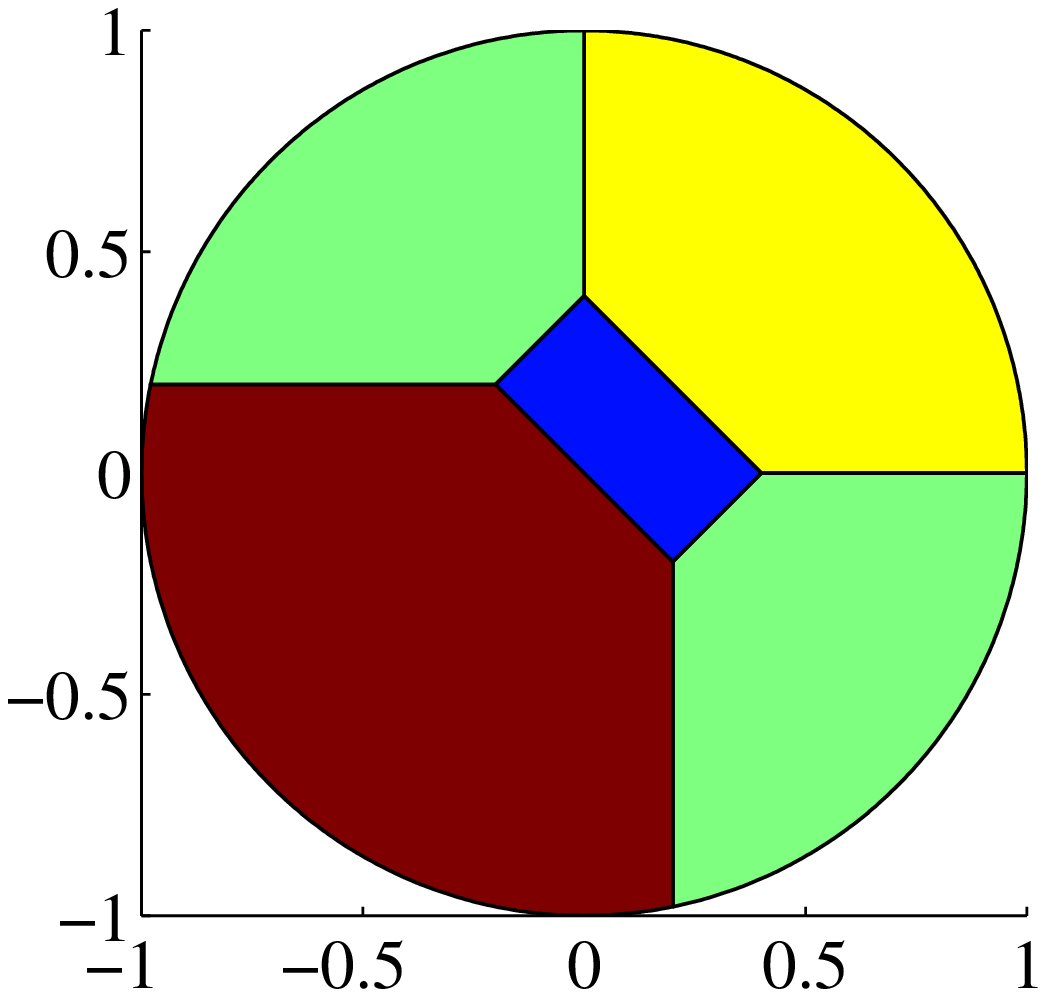}\label{fig:FiveDiracRegions}}
	\subfigure[]{\includegraphics[width=.3\textwidth]{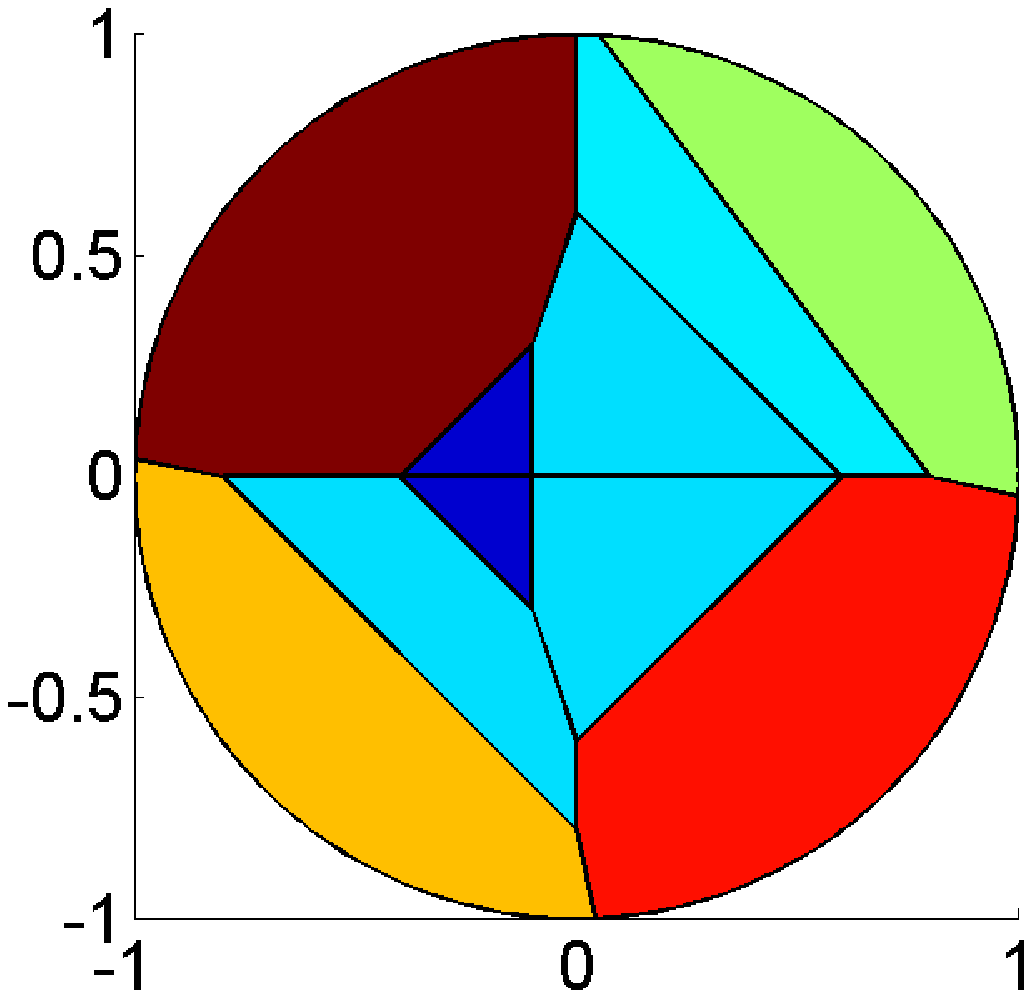}\label{fig:TenDiracRegions}}
  \subfigure[]{\includegraphics[width=.45\textwidth]{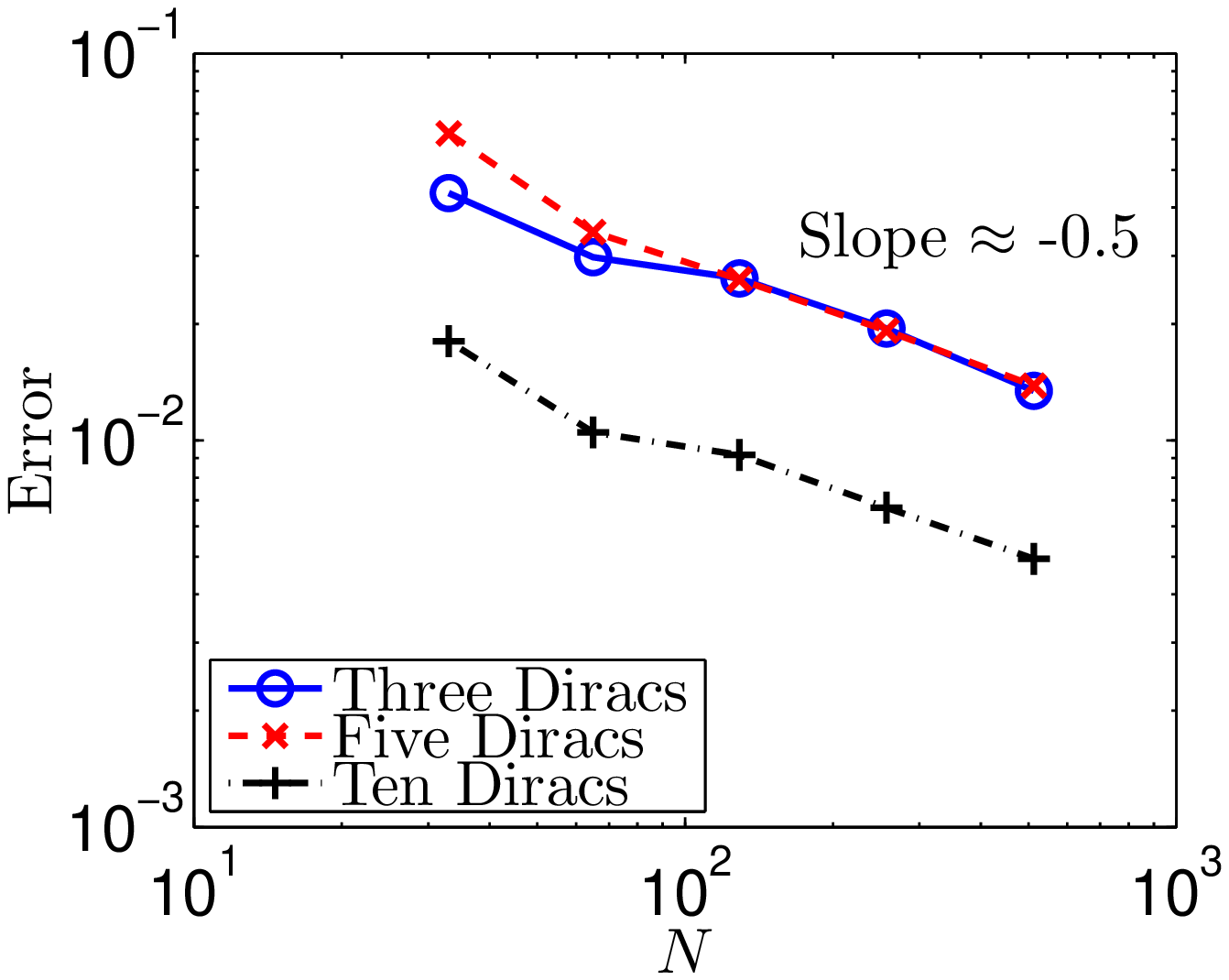}\label{fig:DiracHeightError}}
  	\caption{\subref{fig:ThreeDiracSol},\subref{fig:FiveDiracSol},\subref{fig:TenDiracSol}~Potential functions and \subref{fig:ThreeDiracRegions},\subref{fig:FiveDiracRegions},\subref{fig:TenDiracRegions}~images of Dirac masses for three, five, and ten Dirac masses respectively. \subref{fig:DiracHeightError}~Maximum error in potential at the Diracs.}
  	\label{fig:ThreeDirac}
\end{figure}

\begin{table}[htdp]\small
\begin{center}
\begin{tabular}{c|ccc|ccc}
$N_X$ &\multicolumn{3}{c}{Maximum Error} & \multicolumn{3}{c}{Newton Iterations} \\
  & {Three Diracs} & {Five Diracs} & Ten Diracs & {Three Diracs} & {Five Diracs} & Ten Diracs\\
\hline
33  & $4.36\times10^{-2}$ & $6.21\times10^{-2}$ & $1.80\times10^{-2}$ & 3 & 11 & 10\\
65  & $2.98\times10^{-2}$ & $3.45\times10^{-2}$ & $1.05\times10^{-2}$ & 10 & 9 & 20\\
129 & $2.62\times10^{-2}$ & $2.61\times10^{-2}$ & $0.92\times10^{-2}$ & 13 & 17 & 26\\
257 & $1.95\times10^{-2}$ & $1.92\times10^{-2}$ & $0.67\times10^{-2}$ & 18 & 23 & 31\\
513 & $1.34\times10^{-2}$ & $1.38\times10^{-2}$ & $0.49\times10^{-2}$ & 44 & 25 & 36\\
\end{tabular}
\end{center}
\caption{Maximum error in the computed solution at the Dirac masses and number of Newton iterations for three, five, and ten Diracs.}
\label{table:ThreeDirac}
\end{table}

\subsection{Multiple Diracs}\label{sec:resultsMultiple}
Finally, we randomly position multiple Diracs in the domain and map these onto the unit circle.  The computed solutions are displayed in Figure~\ref{fig:MultiDirac}.  To demonstrate the superiority of the mixed Aleksandrov-viscosity solution, we also provide results computed with the traditional viscosity solver.  Even qualitatively, it is clear that the viscosity solver leads to large errors in the computed cell areas.  The errors in the cell areas are presented in Table~\ref{table:MultiDirac}.  Again, we observe convergence on the order of approximately $\sqrt{h}$ for the Aleksandrov scheme.

\begin{figure}[htdp]
	\centering
	\subfigure[]{\includegraphics[width=.32\textwidth]{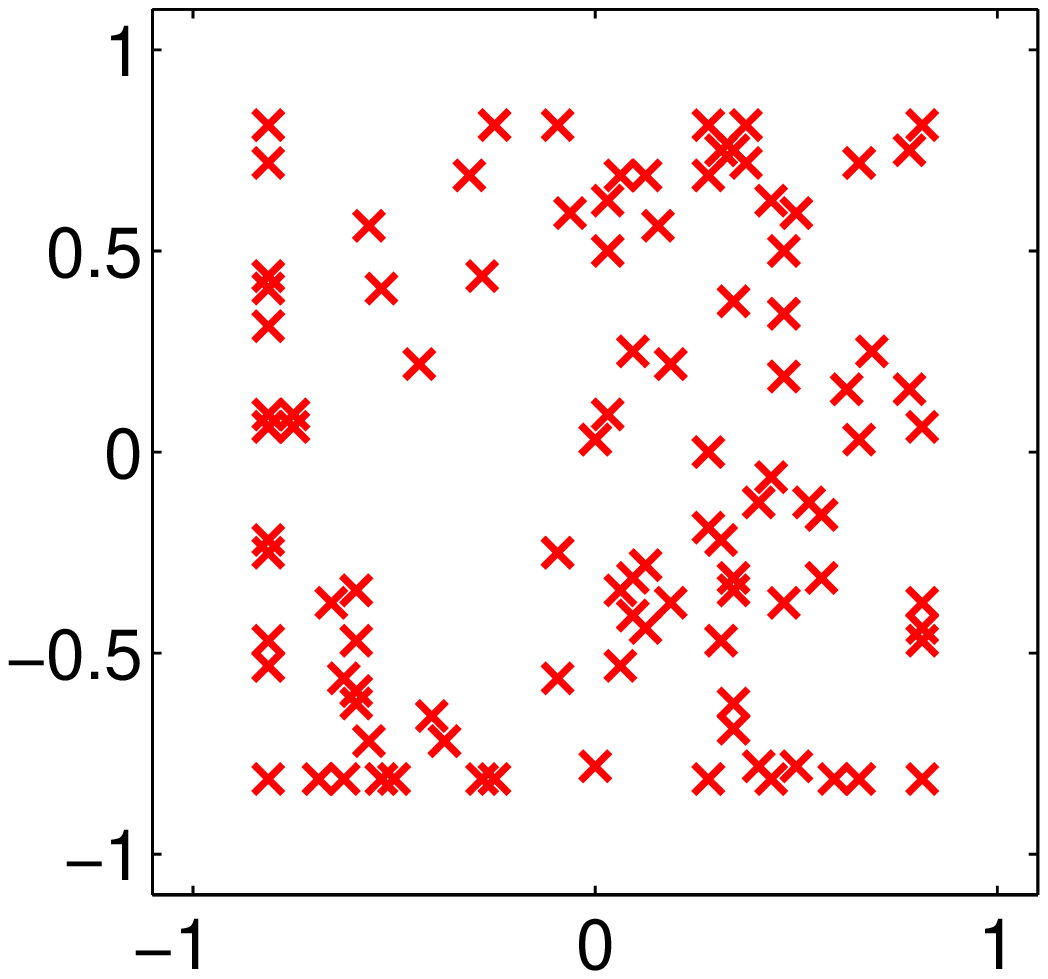}\label{fig:dirac100loc}}
	\subfigure[]{\includegraphics[width=.32\textwidth]{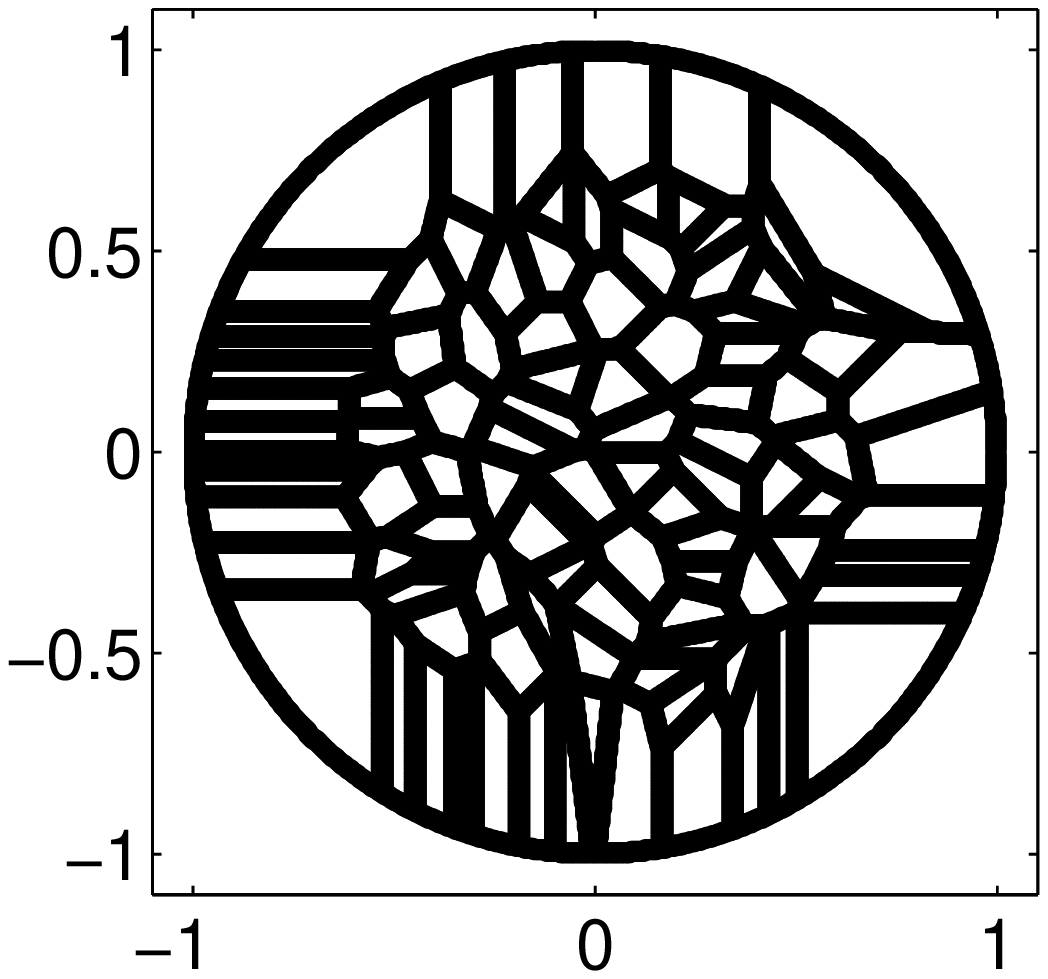}\label{fig:dirac100Visc}}
  \subfigure[]{\includegraphics[width=.32\textwidth]{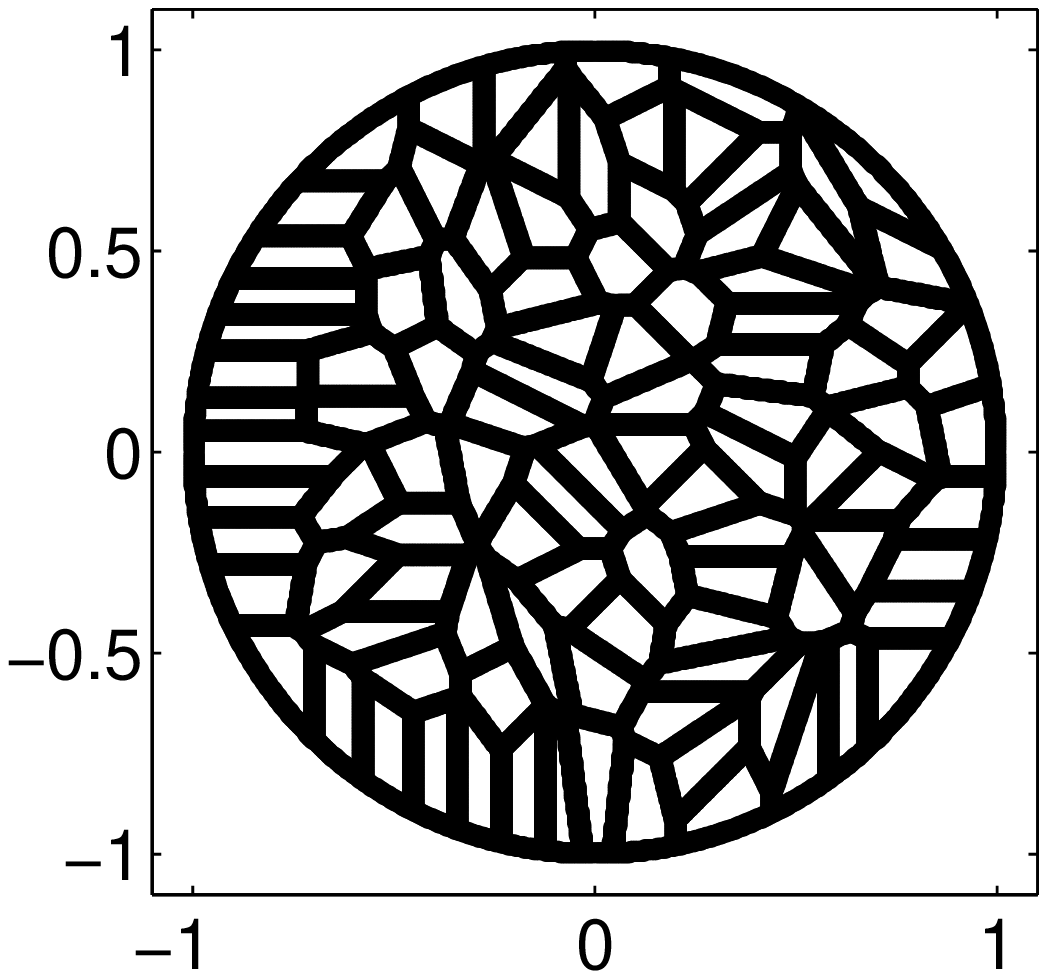}\label{fig:dirac100regions}}
		\subfigure[]{\includegraphics[width=.4\textwidth]{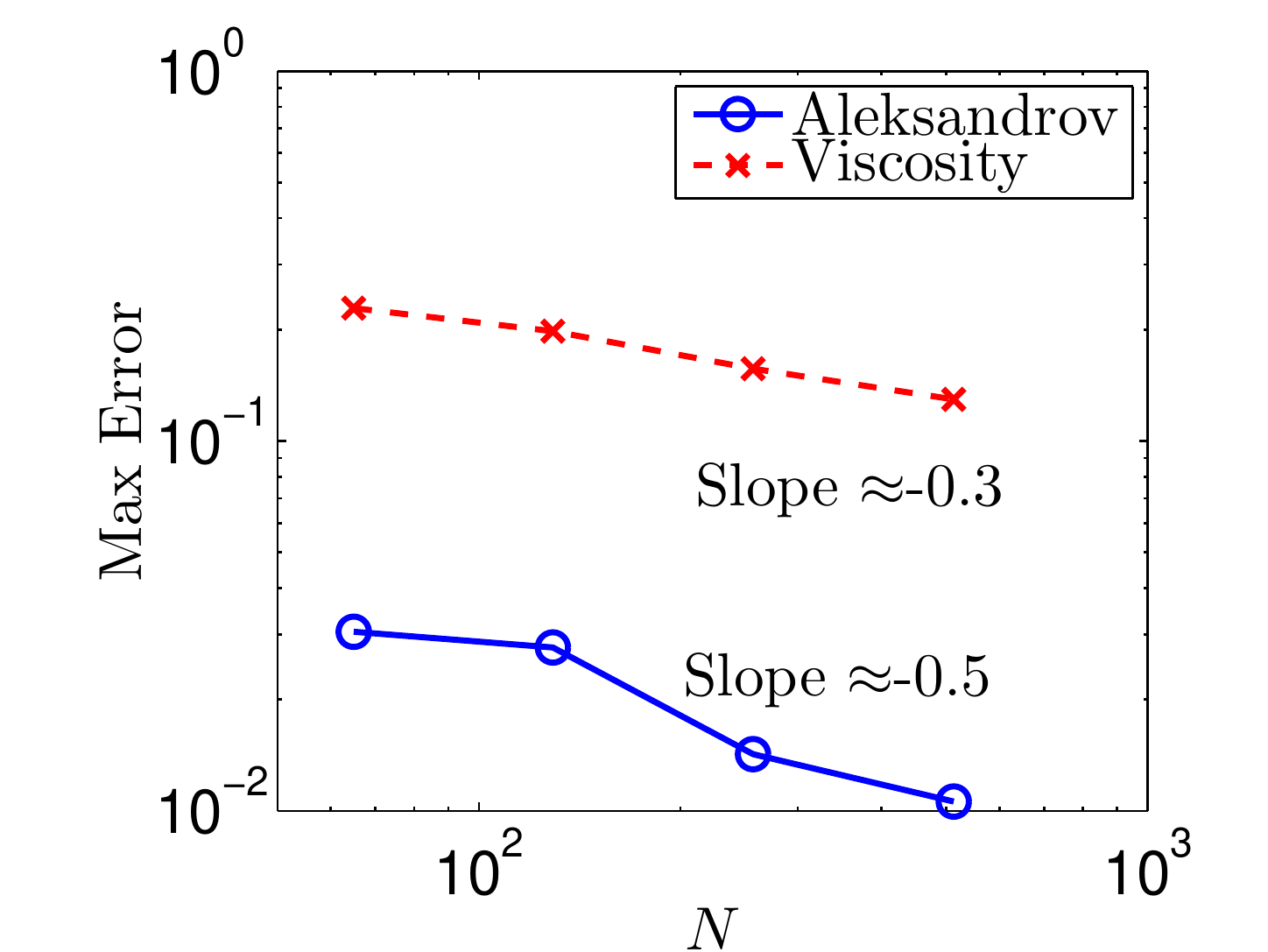}\label{fig:dir100ErrInf}}
	\subfigure[]{\includegraphics[width=.4\textwidth]{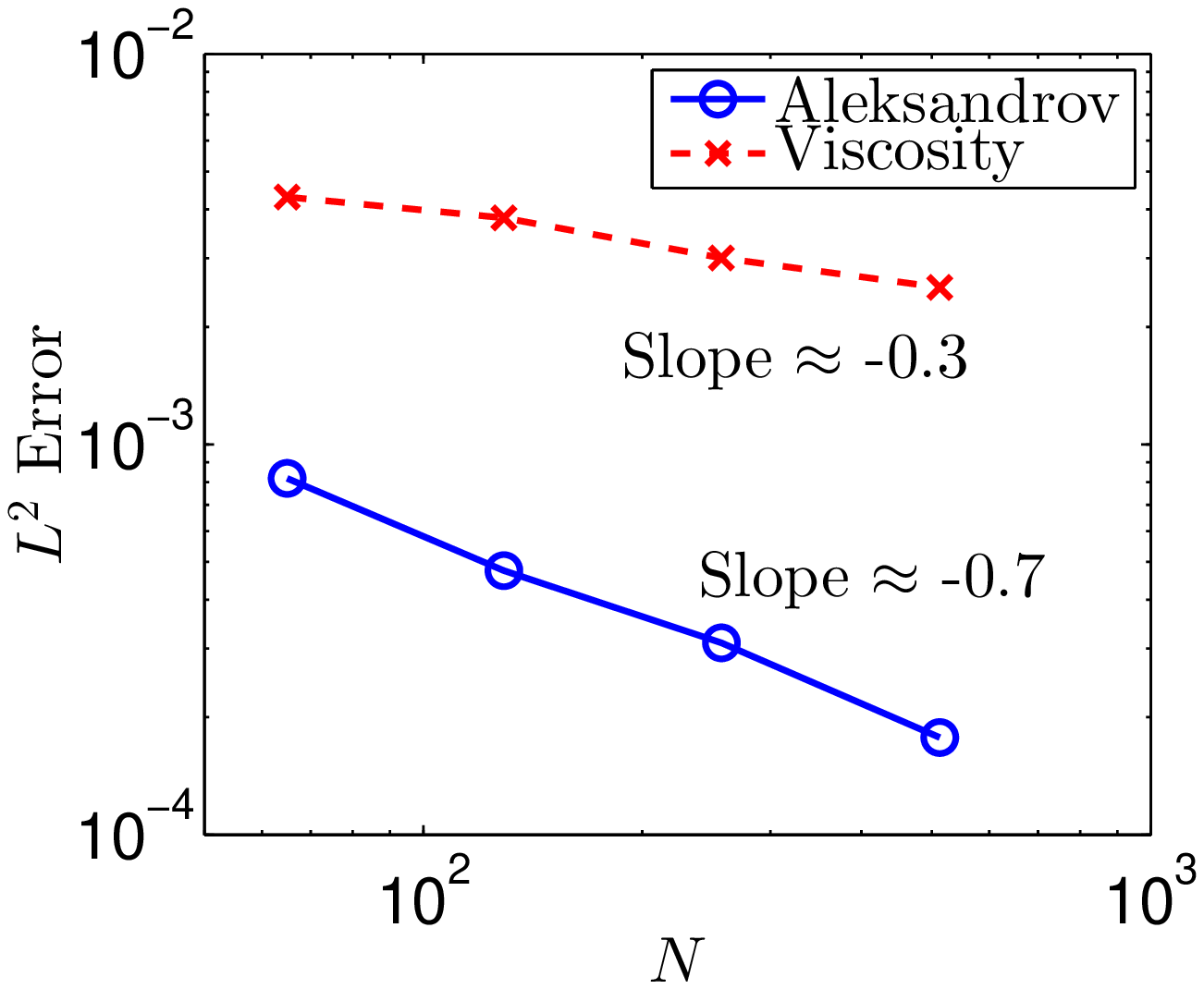}\label{fig:dir100Err2}}
  	\caption{\subref{fig:dirac100loc}~One hundred randomly positioned Dirac masses.  Computed images of these masses using \subref{fig:dirac100Visc}~a traditional viscosity scheme and \subref{fig:dirac100regions}~the mixed Aleksandrov-viscosity scheme.  Errors in computed cell areas measured in \subref{fig:dir100ErrInf}~$L^\infty$ and \subref{fig:dir100Err2}~$L^2$.}
  	\label{fig:MultiDirac}
\end{figure} 

\begin{table}[htdp]\small
\begin{center}
\begin{tabular}{c|cc|cc}
$N_X$ &\multicolumn{2}{c}{Maximum Error} &\multicolumn{2}{c}{$L^2$ Error} \\
  & Aleksandrov-viscosity & Viscosity & Aleksandrov-viscosity & Viscosity \\
\hline
65  & 0.030 & 0.228 & $0.82\times10^{-3}$ & $4.30\times10^{-3}$\\
129 & 0.028 & 0.198 & $0.47\times10^{-3}$ & $3.81\times10^{-3}$ \\
257 & 0.014 & 0.157 & $0.31\times10^{-3}$ & $3.01\times10^{-3}$\\
513 & 0.011 & 0.130 & $0.18\times10^{-3}$ & $2.52\times10^{-3}$ 
\end{tabular}
\end{center}
\caption{Error in the areas of the computed cells for one hundred randomly positioned Dirac masses.}
\label{table:MultiDirac}
\end{table}

\section{Conclusions} 
Existing techniques for computing Pogorelov solutions of the \MA equation rely on geometric methods that have cubic complexity in the number of Dirac masses.  The problem can also be expressed as a well-behaved convex optimisation problem if a good initialisation is available~\cite{dot}.

We introduced a new mixed Aleksandrov-viscosity formulation of this problem and provided a local characterisation of the equation.  Using this new formulation, we constructed a monotone finite difference approximation that can be solved using Newton's method.  Experimentally, the number of Newton iterations depended weakly (approximately~$\bO(M^{0.3})$) on the total number of discretisation points and the accuracy of the computed solution was on the order of $\sqrt{h}$.

A possible application of this method would be 
to provide a good initialisation for an exact geometric algorithm.  The extension to non-constant density functions 
also appears straightforward (Appendix~\ref{app:extension}).  The proof of a comparison principle for this mixed Alexandrov-viscosity formulation
remains open.

{
\appendix

\section{Convextiy}\label{app:convexity}
To assist in the approximation of the \MA equation, we can absorb the convexity constraint into the operator as proposed in~\cite{FroeseTransport}.

We denote by
\bq\label{eq:posneg}
u^+ = \max\{u,0\}, \quad u^- = \max\{-u,0\}
\eq 
the positive and negative parts of the function $u$.  Then the convexified \MA operator can be defined as
\begin{multline}\label{eq:MA_convex}
F(x,u(\cdot),\nabla u(x),D^2u(x)) = \\ \begin{cases}
-\min\limits_{\abs{\nu} = 1, \nu\cdot\nu^\perp = 0}\left\{u_{\nu\nu}^+u_{\nu^\perp\nu^\perp}^+-u_{\nu\nu}^- - u_{\nu^\perp\nu^\perp}^-\right\}, & x\in X \backslash\bigcup\limits_{k=1}^K\{d_k\}\\
-M[u](d_k) + \alpha_k, & k=1,\ldots,K\\
H(\nabla u(x)) - \langle u \rangle, & x\in\partial X .
\end{cases}
\end{multline}

The Aleksandrov-viscosity solution notion is easily adapted to this new operator.
\begin{definition}[Mixed Aleksandrov-viscosity solution]
A Lipschitz continuous function $u$ is a \emph{subsolution (supersolution)} of~\eqref{eq:MA_convex} if 
\begin{enumerate}
\item For every $x_0 \notin \bigcup\limits_k\{d_k\}$ and smooth function $\phi$, if $u-\phi$ has a local maximum (minimum) at $x_0$ then 
\[ 
F(x_0,u(\cdot),\nabla\phi(x_0),D^2\phi(x_0)) \leq(\geq)  0
\]
\item $-M[u](d_k) +\alpha_k\geq (\leq) 0$.
\end{enumerate}
A function is a \emph{mixed Aleksandrov-viscosity solution} if it is both a subsolution and a supersolution.
\end{definition}

\begin{theorem}[Equivalence of solutions]\label{thm:ViscAleksConv}
A function $u$ with mean zero is a solution of~\eqref{eq:MA_convex} if and only if it is a convex Aleksandrov solution of~\eqref{eq:MA_Aleks}.
\end{theorem}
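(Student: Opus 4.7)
\textbf{Proof plan for Theorem~\ref{thm:ViscAleksConv}.} The strategy is to reduce to the already-proved Theorem~\ref{thm:ViscAleks} by showing that (i) on convex $C^2$ test functions the convexified operator~\eqref{eq:MA_convex} agrees with the original operator~\eqref{eq:MA_visc}, and (ii) any mixed Aleksandrov-viscosity solution of~\eqref{eq:MA_convex} is automatically convex. Once these are established, the equivalence of solutions of~\eqref{eq:MA_convex} and convex Aleksandrov solutions of~\eqref{eq:MA_Aleks} is immediate from Theorem~\ref{thm:ViscAleks}. The boundary Hamilton-Jacobi condition and the mean-zero normalization are identical in the two formulations, so they require no further argument.

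\emph{Forward direction.} Suppose $u$ is a convex Aleksandrov solution of~\eqref{eq:MA_Aleks} with mean zero. By Theorem~\ref{thm:ViscAleks}, $u$ satisfies Definition~\ref{def:mixed} for the operator~\eqref{eq:MA_visc}. Given a smooth test function $\phi$ with $u-\phi$ attaining a local maximum at some $x_0 \notin \bigcup_k\{d_k\}$, a standard inf-convolution regularisation (using that $u$ is convex) lets us restrict attention to convex $\phi$, so that $D^2\phi(x_0)\ge 0$. For such $\phi$ the negative parts $\phi_{\nu\nu}^{-}$ vanish, and $-\min_{\nu}\{\phi_{\nu\nu}^{+}\phi_{\nu^\perp\nu^\perp}^{+}\}=-\det D^2\phi(x_0)$ since the minimum is realised along the principal axes of $D^2\phi(x_0)$. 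Thus the convexified operator coincides with $-\det D^2\phi$ on convex test functions, and the subsolution inequality transfers. The Dirac-point condition $-M[u](d_k)+\alpha_k=0$ is unchanged, and the supersolution direction is analogous (the situation is actually easier, since for a minimum against a convex $u$ one can take $\phi$ affine or quadratic with arbitrarily small Hessian).

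\emph{Reverse direction.} Suppose $u$ is a mixed Aleksandrov-viscosity solution of~\eqref{eq:MA_convex}. The crux is to prove that $u$ is convex; once this is known, step (i) above shows that $u$ is also a mixed Aleksandrov-viscosity solution of the unconvexified operator~\eqref{eq:MA_visc}, and Theorem~\ref{thm:ViscAleks} then yields that $u$ is a convex Aleksandrov solution of~\eqref{eq:MA_Aleks}. To prove convexity, we argue by contradiction: assume $u$ fails to be convex. Then one can find a point $x_0$ in the interior of $X\setminus\bigcup_k\{d_k\}$, a unit direction $\nu$, and a $C^2$ test function $\phi$ such that $u-\phi$ attains a local maximum at $x_0$ and $\phi_{\nu\nu}(x_0)<0$. (The construction is the standard one: slide a paraboloid of the form $p\cdot(x-x_0)+\tfrac12(x-x_0)^{T}A(x-x_0)$ with one negative eigenvalue down until it first touches the graph of $u$ from above inside a small ball; since the restriction of $u$ to the segment through $x_0$ in direction $\nu$ is not convex in that ball, such a touching point exists.) Evaluating the convexified operator in the direction $\nu$ gives
\[
\phi_{\nu\nu}^{+}\phi_{\nu^\perp\nu^\perp}^{+}-\phi_{\nu\nu}^{-}-\phi_{\nu^\perp\nu^\perp}^{-} \;\le\; -|\phi_{\nu\nu}(x_0)| \;<\;0,
\]
so $F(x_0,u(\cdot),\nabla\phi(x_0),D^2\phi(x_0))\ge |\phi_{\nu\nu}(x_0)|>0$, contradicting the subsolution property. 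Hence $u$ is convex.

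\emph{Main obstacle.} The delicate step is the construction of the test function $\phi$ with a strictly negative pure second derivative when $u$ is merely a Lipschitz continuous viscosity solution. A priori $u$ need not be twice differentiable where convexity fails, so the argument requires a careful sup-convolution approximation: one regularises $u$ from above, uses that the failure of convexity persists (in a semiconvex sense) for the regularisation, and then produces the quadratic test function at a Lebesgue-type point of the approximant, finally passing to the limit. This is technically the most involved piece, but is a standard viscosity-solution manoeuvre and requires no new ideas beyond those in~\cite{FroeseTransport}.
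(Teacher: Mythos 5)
Your plan is essentially the paper's own proof: the paper also reduces Theorem~\ref{thm:ViscAleksConv} to Theorem~\ref{thm:ViscAleks} via two lemmas (Lemmas~\ref{lem:ViscIsConv} and~\ref{lem:ConvIsVisc}) showing that the convexified operator~\eqref{eq:MA_convex} agrees with~\eqref{eq:MA_visc} on convex test functions and that a negative pure second derivative $\phi_{\nu\nu}(x_0)<0$ of a test function touching from above would force $F>0$, contradicting the subsolution property, so any solution of~\eqref{eq:MA_convex} must be convex. Your extra elaborations (the inf-convolution remark in the forward direction, and the sup-convolution construction of the touching paraboloid) are not needed in the paper's argument --- touching a convex $u$ from above already forces $D^2\phi(x_0)\ge 0$ directly, and the paper simply asserts the viscosity characterisation of convexity in one line --- but they do not change the route.
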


\begin{proof}
The proof follows immediately from Theorem~\ref{thm:ViscAleks}, Lemma~\ref{lem:ViscIsConv}, and Lemma~\ref{lem:ConvIsVisc}.
\end{proof}

%

\begin{lemma}\label{lem:ViscIsConv}
Let $u$ be a convex solution of~\eqref{eq:MA_visc}.  Then $u$ is a mixed Aleksandrov-viscosity solution of~\eqref{eq:MA_convex}.
\end{lemma}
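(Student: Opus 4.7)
The plan is to notice first that the Dirac-point condition $-M[u](d_k)+\alpha_k=0$ and the boundary-point condition $H(\nabla u)-\langle u\rangle=0$ appear verbatim in both (\ref{eq:MA_visc}) and (\ref{eq:MA_convex}), so the entire content of the lemma lies in transferring the interior sub- and supersolution inequalities from convex test functions (admissible for (\ref{eq:MA_visc})) to arbitrary smooth test functions (admissible for (\ref{eq:MA_convex})). The bridge between the two operators is the elementary fact that when $D^2\phi(x_0)\succeq 0$, the values $\phi_{\nu\nu}(x_0)$ are all nonnegative and $\phi_{\nu\nu}^-(x_0)=\phi_{\nu^\perp\nu^\perp}^-(x_0)=0$, so the convexified operator collapses to $-\min_{\abs{\nu}=1,\,\nu\cdot\nu^\perp=0}\phi_{\nu\nu}(x_0)\phi_{\nu^\perp\nu^\perp}(x_0)$; a direct diagonalisation argument then shows this minimum equals $\det D^2\phi(x_0)$.

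For the subsolution direction I would first show that convexity of $u$ alone forces $D^2\phi(x_0)\succeq 0$ whenever $u-\phi$ has a local maximum at an interior non-Dirac point $x_0$: choosing $p\in\partial u(x_0)$, the supporting inequality $u(x)\geq u(x_0)+p\cdot(x-x_0)$ combines with $\phi(x)-\phi(x_0)\geq u(x)-u(x_0)$ near $x_0$, and Taylor-expanding $\phi$ along $x_0\pm t\nu$ and summing the resulting inequalities yields $\nu^T D^2\phi(x_0)\nu\geq 0$. Having identified the convexified operator with $-\det(D^2\phi(x_0))$, I would construct the globally convex quadratic
\[
\psi_\varepsilon(x)=\phi(x_0)+\nabla\phi(x_0)\cdot(x-x_0)+\tfrac{1}{2}(x-x_0)^T\bigl(D^2\phi(x_0)+\varepsilon I\bigr)(x-x_0),
\]
verify via Taylor expansion that $\psi_\varepsilon\geq\phi$ in a neighbourhood of $x_0$ with equality at $x_0$ so that $u-\psi_\varepsilon$ retains the local maximum, apply the subsolution property of (\ref{eq:MA_visc}) to get $\det(D^2\phi(x_0)+\varepsilon I)\geq 0$, and pass to the limit $\varepsilon\to 0$.

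The supersolution direction is the main obstacle, because the simple trick of adding $\tfrac{\varepsilon}{2}\abs{x-x_0}^2$ destroys a local minimum rather than preserving it. The plan is to split into cases on the signature of $D^2\phi(x_0)$. If some eigenvalue is strictly negative along a direction $\nu_0$, then $\phi_{\nu_0\nu_0}^-(x_0)>0$, which forces the bracket inside the min to be strictly negative at $\nu_0$ and makes the operator strictly positive at $x_0$ for free. In the remaining case $D^2\phi(x_0)\succeq 0$, the operator reduces to $-\det(D^2\phi(x_0))$ and I must show $\det(D^2\phi(x_0))\leq 0$. The key device is a family of convex quadratic lower barriers: for every PSD matrix $A$ with $A+\varepsilon I\preceq D^2\phi(x_0)$, the quadratic
\[
\psi_A(x)=\phi(x_0)+\nabla\phi(x_0)\cdot(x-x_0)+\tfrac{1}{2}(x-x_0)^T A(x-x_0)
\]
satisfies, by Taylor expansion, $u(x)-\psi_A(x)\geq u(x_0)-\psi_A(x_0)$ near $x_0$, so $u-\psi_A$ has a local minimum at $x_0$ and the supersolution property of (\ref{eq:MA_visc}) gives $\det(A)\leq 0$. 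If $D^2\phi(x_0)$ were strictly positive definite, taking $A=D^2\phi(x_0)-2\varepsilon I$ for small $\varepsilon$ would produce a strictly positive definite $A$ with $\det(A)>0$, contradicting the previous inequality; hence the smaller eigenvalue of $D^2\phi(x_0)$ must vanish, which gives $\det(D^2\phi(x_0))=0$ and completes the argument.
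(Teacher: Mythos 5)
Your interior argument is correct, and it actually tightens the paper's own reasoning: the convex quadratic barriers $\psi_\varepsilon$ and $\psi_A$ make rigorous what the paper asserts loosely (that a smooth $\phi$ touching the convex $u$ from above ``must be convex near $x_0$'', and that a non-convex $\phi$ admits a direction with $\phi_{\nu\nu}<0$ near $x_0$ --- a dichotomy which, as stated in the paper, misses the degenerate case $D^2\phi(x_0)\succeq 0$ with a zero eigenvalue; your case split handles it, since there the convexified operator equals $-\det D^2\phi(x_0)=0$). The Dirac-point condition does transfer verbatim, since condition (2) of both definitions involves no test functions.

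The gap is your opening claim that the boundary condition also transfers verbatim. The operator $H(\nabla u)-\langle u\rangle$ is the same in \eqref{eq:MA_visc} and \eqref{eq:MA_convex}, but in both definitions points $x_0\in\partial X$ are handled through condition (1), i.e.\ through test functions: the hypothesis gives the inequality only for \emph{convex} smooth $\phi$, while the conclusion requires it for \emph{all} smooth $\phi$. So at $\partial X$ you face exactly the same enlargement of the test class as in the interior, and your proof never establishes $H(\nabla\phi(x_0))-\langle u\rangle\leq(\geq)0$ for non-convex $\phi$; this is precisely where the paper does extra work, noting for a minimum that $\nabla\phi(x_0)\in\partial u(x_0)$ and invoking $\partial u(\partial X)\subset\partial B(0,1)$ from Lemma~\ref{lem:ViscAleks}. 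The omission is repairable with your own devices: for a minimum at $x_0\in\partial X$ one has $\nabla\phi(x_0)\in\partial u(x_0)$, so replacing $\phi$ by the affine function $x\mapsto\phi(x_0)+\nabla\phi(x_0)\cdot(x-x_0)$ (convex, same gradient) preserves the minimum and the hypothesis applies; for a maximum, $D^2\phi(x_0)\succeq0$ still holds (the touching inequality is two-sided since $u$ is defined on a full neighbourhood of $\partial X$) and your $\psi_\varepsilon$, which has the same gradient at $x_0$, does the job --- though one should also check these substitute test functions against the paper's modified notion of extremum at $\partial X$ with its approximating interior sequence. As written, however, the assertion that the entire content of the lemma is interior is not correct, and the boundary case must be argued.
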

\begin{proof}
First we show that $u$ is a subsolution of the convexified \MA equation.  Let $x_0\notin\bigcup\limits_k\{d_k\}$, $\phi\in C^2$ , and suppose $u-\phi$ has a local maximum at $x_0$.  Since $u$ is a convex function, the test function $\phi$ must be also be convex in a neighbourhood of $x_0$.  Then we have
\[ 
F(x_0,u(\cdot),\nabla\phi(x_0),D^2\phi(x_0)) \leq  0
\]
which has the correct sign since $u$ is a viscosity subsolution of the original \MA equation~\eqref{eq:MA_visc}.

Additionally, $-M[u](d_k)+\alpha_k = 0$ since $u$ is a solution of~\eqref{eq:MA_visc}.

Next we verify that $u$ is a supersolution.  Let $\phi\in C^2$ and suppose that $u-\phi$ has a local minimum at a point $x_0$.  If $x_0\in\partial X$ then we require $\nabla u(x_0)\in\partial u(x_0)$ in order to achieve a local minimum.  Thus $H(\nabla\phi(x_0)) = 0$ since $\partial u(\partial X) \subset \partial B(0,1)$ (See Lemma~\ref{lem:ViscAleks}).

Now we check the condition for    $x_0\in X\backslash\bigcup\limits_k\{d_k\}$.
If $\phi$ is convex near $x_0$, we can repeat the argument used for the subsolution property to verify that the convexified \MA operator has the correct sign.  Otherwise, $\phi$ is non-convex.
Then there exists a direction $\nu$ such that $\phi_{\nu\nu}<0$ in a neighbourhood of $x_0$.  Thus 
\[  -\min\limits_{\abs{\nu} = 1, \nu\cdot\nu^\perp = 0}\left\{\phi_{\nu\nu}^+\phi_{\nu^\perp\nu^\perp}^+-\phi_{\nu\nu}^- - \phi_{\nu^\perp\nu^\perp}^-\right\} > 0\]
near $x_0$ so that
\[F(x_0,u(\cdot),\nabla\phi(x_0),D^2\phi(x_0)) \geq  0\]
which yields the desired inequality.  

As before, $M[u]$ takes on the correct values at the diracs.
\end{proof}

\begin{lemma}\label{lem:ConvIsVisc}
Let $u$ be a viscosity solution of~\eqref{eq:MA_convex}.  Then $u$ is a convex viscosity solution of~\eqref{eq:MA_visc}.
\end{lemma}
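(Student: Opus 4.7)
The plan is to proceed in two independent stages: first establish that $u$ is a convex function, so that Definition~\ref{def:mixed} even applies to it, and then translate the sub and supersolution inequalities of \eqref{eq:MA_convex} into those of \eqref{eq:MA_visc} by exploiting the agreement of the two operators on convex test functions. The main obstacle is the first stage: the pointwise viscosity inequalities at above-touching test functions must be promoted to the global inequality defining a convex function, and one has to be careful about the exceptional set where the PDE form of \eqref{eq:MA_convex} is inactive.

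For convexity, I would test the subsolution property of \eqref{eq:MA_convex} with smooth \emph{non-convex} $\phi$. Suppose $u-\phi$ has a local maximum at $x_0\in\mathrm{int}(X)\setminus\bigcup_k\{d_k\}$ and $D^2\phi(x_0)$ has a negative eigenvalue $\lambda<0$ in direction $\nu$. Then $\phi_{\nu\nu}^+(x_0)=0$ while $\phi_{\nu\nu}^-(x_0)=-\lambda>0$, so the quantity inside the minimum that defines the convexified operator at this direction is at most $-\phi_{\nu\nu}^-(x_0)<0$. This forces $-\min_{\nu'}\{\cdots\}>0$, contradicting the subsolution inequality of \eqref{eq:MA_convex}. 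Hence every $C^2$ test function that touches $u$ from above at an interior non-Dirac point must satisfy $D^2\phi(x_0)\geq 0$, which by the classical viscosity characterization is equivalent to $u$ being convex on $\mathrm{int}(X)\setminus\bigcup_k\{d_k\}$. Since the Diracs form a finite set and $u$ is Lipschitz continuous (as required by the definition of a solution of \eqref{eq:MA_convex}), convexity extends to all of the convex domain $X$ by continuity.

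Given convexity, the second stage is essentially bookkeeping. The Dirac clause $-M[u](d_k)+\alpha_k\geq(\leq) 0$ is literally identical in both formulations, and the boundary clause $H(\nabla\phi(x_0))-\langle u\rangle$ is inherited directly, noting that the convex smooth test functions used in Definition~\ref{def:mixed} form a subset of the smooth test functions used in the definition of a solution of \eqref{eq:MA_convex}. At an interior non-Dirac point with a convex test function $\phi$, the convexified operator collapses, since $\phi_{\nu\nu}^-=\phi_{\nu^\perp\nu^\perp}^-=0$, to $-\min_{|\nu|=1}\phi_{\nu\nu}\phi_{\nu^\perp\nu^\perp}$. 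A short computation using that $\phi_{\nu\nu}+\phi_{\nu^\perp\nu^\perp}=\mathrm{tr}\,D^2\phi$ is independent of $\nu$ (together with an AM--GM argument applied in the eigenbasis) identifies this minimum with $\det(D^2\phi(x_0))$. Thus the convexified operator coincides with $-\det(D^2\phi)$ on convex tests, and the subsolution resp.\ supersolution inequality for \eqref{eq:MA_visc} follows immediately from that for \eqref{eq:MA_convex}; the subsolution case is in fact automatic, since $\det\geq 0$ whenever $\phi$ is convex.
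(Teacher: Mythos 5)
Your proof is correct and takes essentially the same route as the paper's: rule out non-convex test functions touching from above via the negative-eigenvalue contradiction (hence $u$ is convex), then use that the convexified operator coincides with $-\det(D^2\phi)$ on convex test functions to transfer the sub- and supersolution inequalities from~\eqref{eq:MA_convex} to~\eqref{eq:MA_visc}. You merely spell out details the paper leaves implicit, namely the viscosity characterization of convexity, the extension of convexity across the finite Dirac set, and the eigenbasis computation identifying $\min_{\abs{\nu}=1}\phi_{\nu\nu}\phi_{\nu^\perp\nu^\perp}$ with $\det(D^2\phi)$.
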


\begin{proof}
Choose any smooth $\phi$ such that $u-\phi$ has a local maximum at some point $x_0\in X\backslash\bigcup\limits_k \{d_k\}$.  Since $u$ is a viscosity solution of~\eqref{eq:MA_convex}, we require
\[ F(x_0,u(\cdot),\nabla\phi(x_0),D^2\phi(x_0)) =  -\min\limits_{\abs{\nu} = 1, \nu\cdot\nu^\perp = 0}\left\{\phi_{\nu\nu}^+\phi_{\nu^\perp\nu^\perp}^+-\phi_{\nu\nu}^- - \phi_{\nu^\perp\nu^\perp}^-\right\} \leq 0.\]
As demonstrated in the proof of Lemma~\ref{lem:ViscIsConv}, this is only possible if $\phi$ is convex.  Since only convex test functions allow for local maxima, we conclude that $u$ is also a convex function.

We need only check the definition at points $x_0\in X\backslash\bigcup\limits_k \{d_k\}$ since the operator is unchanged in the remainder of the domain.  We show that $u$ is a subsolution of~\eqref{eq:MA_visc}; the supersolution property is similar.  Now we choose any convex $\phi\in C^2$ such that $u-\phi$ has a maximum at $x_0$.  Since $u$ is a viscosity solution of~\eqref{eq:MA_convex}, we know that
\begin{align*}
F(x_0,u(\cdot),\nabla\phi(x_0),D^2\phi(x_0)) &=  -\min\limits_{\abs{\nu} = 1, \nu\cdot\nu^\perp = 0}\left\{\phi_{\nu\nu}^+\phi_{\nu^\perp\nu^\perp}^+-\phi_{\nu\nu}^- - \phi_{\nu^\perp\nu^\perp}^-\right\}\\
  &=  -\min\limits_{\abs{\nu} = 1, \nu\cdot\nu^\perp = 0}\left\{\phi_{\nu\nu}^+\phi_{\nu^\perp\nu^\perp}^+-\phi_{\nu\nu}^-\right\}\\
	&\leq 0,
\end{align*}
where the two operators are equivalent since $\phi$ is convex.  
\end{proof}

\section{Discretisation of Monge-Amp\`ere} \label{app:MA}

We briefly review the approximation of the \MA operator and boundary conditions, which have been fully described in previous works~\cite{BFO_OTNum,FroeseTransport}.
\bq\label{eq:MA_BC}
\begin{cases}
 -{\det}^+(D^2u(x)) \equiv  -\min\limits_{\abs{\nu}=1}\left\{(u_{\nu\nu})^+(u_{\nu^\perp\nu^\perp})^+-(u_{\nu\nu})^- - (u_{\nu^\perp\nu^\perp})^-\right\},& \quad x\in X \\
H(\nabla u), & x\notin X .
\end{cases}
\eq
We let $h$ denote the spatial resolution of the grid and introduce an angular discretisation $\theta_i, \, i=1,\ldots,N$ of the periodic interval $[0,2\pi)$. This will not be uniform, but is instead chosen so that whenever the point $x$ lives on the grid, there exists a value $l_i>0$ such that $x\pm l_ihs_{\theta_i}$ also lies on the grid. 

In the discretised problem, the minimum in the \MA operator~\eqref{eq:MA_BC} is computed over the directions $\nu_i = s_{\theta_i}$ and the needed second directional derivatives are approximated by
\bq\label{eq:D2_disc}
u_{\nu_i\nu_i}(x) \approx \frac{u(x+l_i hs_{\theta_i}) + u(x-l_ihs_{\theta_i})-2u(x)}{l_i^2h^2},
\eq
which leads to a consistent, monotone approximation $(MA)^h[u]$ of the \MA operator.
 
To include the mean-zero constraint, we use the approximation
\bq\label{eq:discMA}
(MA)^h[u_{i,j}] - h^2\sum\limits_{k,l} u_{k,l} + h^2u_{i,j} = 0.
\eq 
Note that this approximation is consistent and preserves monotonicity.

To avoid the need to compute the mean at each step, we note that this is equivalent to solving the discrete equation
\bq\label{eq:discMA2} (MA)^h[v_{i,j}] + h^2v_{i,j} = 0 \eq
and setting
\bq\label{eq:scaleMA} u_{i,j} = v_{i,j} + \frac{h^2}{h^2-1}\sum\limits_{k,l}v_{k,l}. \eq

To approximate the Hamilton-Jacobi equation, we express the signed distance function in terms of the supporting hyperplanes to the target set $Y$.
\bq\label{HJoblique}
H(\grad u(x)) = \sup\limits_{\abs{n}=1}\{ \grad u(x) \cdot n - H^*(n) \mid n\cdot n_x > 0\}
\eq
where the Legendre-Fenchel transform is explicitly given by
\bq\label{LF}
H^*(n) = \sup\limits_{y_0\in\partial Y}\{y_0\cdot n\}
\eq
and $n_x$ is the unit outward normal to the domain at a boundary point $x\in X$.

The Legendre-Fenchel transforms are pre-computed using a finite set of $N_Y$ directions
\[ n_j = \left(\cos\left({2\pi j/N_Y}\right),\sin\left({2\pi j/N_Y}\right)\right), \quad j = 1,\ldots,N_Y. \]
Then the advection terms in the Hamilton-Jacobi equation are approximated by a simple upwinding scheme 
\begin{multline} \nabla u(x)\cdot n \approx \max\{n_1,0\}\frac{u(x) - u(x-he_1 )}{h} + \min\{n_1,0\}\frac{u(x+he_1)-u(x)}{h} \\
+ \max\{n_2,0\}\frac{u(x) - u(x-he_2 )}{h} + \min\{n_2,0\}\frac{u(x+he_2)-u(x)}{h}. 
\end{multline}
At boundary points, values outside the domain will not contribute to the supremum in~\eqref{HJoblique} and can be set to zero.

In addition to boundary points, we also set this Hamilton-Jacobi equation in a layer around the set $X$ that is just large enough to accommodate the wide stencil discretisation of the \MA operator.

\section{Extension to non-constant densities} \label{app:extension}

The approach described in this paper can be extended to more general measures.  We can consider a target measure
\[d\nu(y) = g(y) \, dy\] 
where $g$ is a positive, Lipschitz continuous density supported on a convex set $Y$.  We can also allow the source measure to have a non-constant background density $f(x)$
\[ \mu(E) = \sum\limits_{k=1}^K\delta_{d_k}(E) + \int_E f(x)\,dx. \]

The Alexandrov formulation of the Monge-Amp\`ere equation (Definition~\ref{def:aleks}) is now
\[ A_{u}(E) = \mu(E) \] 
for every measurable $E\in\R^2$.
Here
\[
A_{u} (E)  = \int_{\partial u(E)} g(y) \, dy \]

To develop the local characterisation of this measure at the dirac points, we consider
\[
M[u](d_k) = \int_{\partial u(d_k)} g(y) \, dy .
\]
As in Theorem~\ref{thm:subgradient}, we can convert the subgradient into polar coordinates to obtain an expression of the form
\[
M[u](d_k) =  \int_{0}^{2\pi} \int_{\min(R_-(x,\theta'),R_+(x,\theta'))}^{ R_+(x,\theta') } g(r,\theta)\, r \, dr \, d\theta'.
\]

Now for each $\theta$ we let $G_{\theta}(r)$ be an antiderivative of \[r \mapsto g (r,\theta) \, r.\]  Note that for all $\theta$, $G_{\theta}$ is a non-decreasing function
when $r\in \R^+$ and $g \ge 0 $. 
We therefore obtain the local characterisation
\[
M[u](d_k) =  \int_{0}^{2\pi} \left[G_\theta(R_+(x,\theta')) -  G_\theta(R_-(x,\theta'))  \right]^+ \, d\theta
\]
where $R_-, R_+$ are defined as in~\eqref{eq:Bp}-\eqref{eq:Bm}.

The boundary condition will be defined in terms of $H(y)$, the signed-distance to the boundary of the convex target $Y$.

Thus the \MA equation we need to solve is
\bq
F(x,u(\cdot),\nabla u(x), D^2 u(x)) = 
\begin{cases}
-\det(D^2u(x)) + f(x)/g(\nabla u(x)) , & x\in X \backslash\bigcup\limits_{k=1}^K\{d_k\}\\
-M[u](d_k) + \alpha_k, & k=1,\ldots,K\\
H(\nabla u(x)) - \langle u \rangle , & x\in\partial X .
\end{cases}
\eq
A mixed Aleksandrov-viscosity solution is defined for this equation as in Definition~\ref{def:mixed}.

As in~\autoref{sec:discMeasure}, finite difference approximations of $\partial_\theta  u (d_k)$ lead to a consistent, monotone discretisation 
of $M[u](d_k)$.  Note that the implementation will now require the computation of the functions $G_{\theta}$.

}

\bibliographystyle{amsplain}
\bibliography{AleksVisc}

\end{document}